\DeclareMathOperator{\dist}{dist}
\DeclareMathOperator{\kernel}{ker}
\DeclareMathOperator{\range}{Ran}
\def \supp {\,{\rm supp}\,}
\def \prox {\,{\rm prox}\,}
\def \argmin {\,{\rm argmin}\,}
\numberwithin{equation}{section}
\newtheorem{theorem}{\bf Theorem}[section]
\newtheorem{lemma}[theorem]{\bf Lemma}
\newtheorem{proposition}[theorem]{\bf Proposition}
\newtheorem{coro}[theorem]{\bf Corollary}
\newenvironment{proof}{\noindent{\em Proof:}}{\quad \hfill$\Box$\vspace{2ex}}
\renewcommand{\theequation}{\arabic{section}.\arabic{equation}}
\newcommand{\vertiii}[1]{{\left\vert\kern-0.25ex\left\vert\kern-0.25ex\left\vert #1 
    \right\vert\kern-0.25ex\right\vert\kern-0.25ex\right\vert}} % three bar norm symbol
\begin{document}

\title{\bf A Duality Approach to Regularized Learning Problems in Banach Spaces}

\author{Raymond Cheng\thanks{
Department of Mathematics and Statistics, Old Dominion University, Norfolk, VA 23529, USA. E-mail address: {\it rcheng@odu.edu}.}, \quad Rui Wang\thanks{School of Mathematics, Jilin University, Changchun 130012, P. R. China. E-mail address: {\it rwang11@jlu.edu.cn}.} \quad and \quad Yuesheng Xu\thanks{Department of Mathematics and Statistics, Old Dominion University, Norfolk, VA 23529, USA. This author is also a Professor Emeritus of Mathematics, Syracuse University, Syracuse, NY 13244, USA. E-mail address: {\it y1xu@odu.edu.} All correspondence should be sent to this author.} }

\date{}

\maketitle{}

\begin{abstract}
Learning methods in Banach spaces are often formulated as regularization problems which minimize the sum of a data fidelity term in a Banach norm and a regularization term in another Banach norm. Due to the infinite dimensional nature of the space, solving such regularization problems is challenging. We construct a direct sum space based on the Banach spaces for the data fidelity term and the regularization term, and then recast the objective function as the norm of a suitable quotient space of the direct sum space. In this way, we express the original regularized problem as an unregularized problem on the direct sum space, which is in turn reformulated as a dual optimization problem in the dual space of the direct sum space. The dual problem is to find the maximum of a linear function on a convex polytope, which may be solved by linear programming. A solution of the original problem is then obtained by using related extremal properties of norming functionals from a solution of the dual problem. Numerical experiments are included to demonstrate that the proposed duality approach leads to an implementable numerical method for solving the regularization learning problems.
\end{abstract}

\textbf{Key words}: Duality, regularization problem, linear programming, sparsity

\textbf{2020 Mathematics Subject Classification}: Primary: 68Q32, 41A05; Secondary: 46B45.

\section{Introduction}
Learning in Banach spaces has received considerable attention in the last two decades \cite{Ar, Az, Candes, Donoho, HLTY, PN, SKHK, Song-Zhang, Song-Zhang-Hickernell, Sri, Unser2, Unser1, WX2020, Xu2023, XuYe, Zhang-Xu-Zhang}. Learning methods are often formulated as regularization problems \cite{LP13, SKHK}. Such a problem seeks to minimize an objective function in the form of the sum of a data fidelity term in the norm of a Banach space, and a regularization term in the norm of another Banach space, which is often of infinite dimension. Due to the big data nature of recent real-world applications,  regularization in a Banach space with a sparsity promoting norm is widely used in various practical fields such as statistics \cite{tibshirani1996regression,tibshirani2011solution}, machine learning \cite{bi2003dimensionality,scholkopf2002learning}, signal processing \cite{chen2001atomic}, image processing and medical imaging \cite{MSX,rudin1992nonlinear}. In particular, special Banach spaces related to the $\ell_1$ space have proved useful in handwritten digit recognition \cite{lin2021multi, LWXY}.
However, effective solutions of these regularization problems are challenging. Especially, in the context of sparse learning \cite{Xu2023}, the norm of the Banach space for the regularization term is often chosen as a sparsity promoting norm, which usually is non-differentiable. Solving regularization problems having non-differentiable objective functions is even more troublesome, requiring great care. Moreover, it is demanding to develop efficient numerical solvers to learn a function in an infinite dimensional Banach space. 

The goal of this paper is to develop a duality approach for the solution of the class of regularization problems described above that arise in machine learning, attempting to provide a mathematical basis for further development of efficient numerical methods.  
Motivated by the duality approach \cite{CX1} for solving the minimum norm interpolation problem in $\ell_1(\mathbb{N})$, we develop a duality approach to solve problems of this type. Specifically, we construct a {\it direct sum space} based on the Banach spaces for the data fidelity term and for the regularization term, and identify the objective function of the regularization problem as the norm of a suitable quotient space of the direct sum space. This expresses the original regularization problem as a minimum norm problem on the quotient space. By considering the dual space of this resulting space, we further reformulate the original regularization problem as an equivalent problem in the dual space. By analyzing the geometry of the resulting space to determine the combination of parameters that will give rise to sparse solutions, we identify that the dual problem is to find the maximum of a linear function on a convex polytope. The resulting problem can be solved by linear programming. Once that problem is solved, we solve the original problem by using related extremal properties of norming functionals.

We proceed to describe the regularization problem to be considered in this paper. Let $\mathscr{X}$ be a real Banach space with the dual space $\mathscr{X}^*$. For $m\in\mathbb{N}$, we set $\mathbb{N}_m:=\{1,2,\ldots,m\}$. Suppose that $\mathbf{a}_k$, $k\in\mathbb{N}_m$, are linearly independent elements of $\mathscr{X}^*$, and $y_k$, $k\in\mathbb{N}_m$, are fixed real numbers, not all zero. Assume that there is a positive parameter $\rho$. Consider the minimization problem
\begin{equation}\label{genregprob0}
   \inf\Bigg\{ \sum_{k\in\mathbb{N}_m} \big| y_k - \langle \mathbf{x}, \mathbf{a}_k\rangle_{\mathscr{X}} \big|  + \rho\|\mathbf{x}\|_{\mathscr{X}}: \mathbf{x} \in \mathscr{X} \Bigg\}.
\end{equation}
The pairs $(\mathbf{a}_k, y_k)$, $k\in\mathbb{N}_m$, constitute a {\em training sample}.   The vector $\mathbf{x}$ is a {\em hypothesis}.   The quantity in \eqref{genregprob0} being minimized is known as a {\em loss functional} resulting from a hypothesis $\mathbf{x}$ and the training sample. 
This is a basic problem in machine learning. The vector $\mathbf{x}$ 
represents an estimate of the law underlying the training sample.  The second term is the ``regularizer,'' intended to prevent overfitting the data. The positive parameter $\rho$ controls the relative influence of this term. A related problem is to find $\mathbf{x} \in \mathscr{X}$  that minimizes
\begin{equation}\label{genintprob0}
    \inf \Big\{ \|\mathbf{x}\|_{\mathscr{X}}: \langle \mathbf{x}, \mathbf{a}_k\rangle_{\mathscr{X}}= y_k,\ k\in\mathbb{N}_m,\ \mathbf{x} \in \mathscr{X} \Big\}.
\end{equation}
This is to find the minimum norm $\mathbf{x}$ that  interpolates the data set. For this reason, \eqref{genintprob0} is called a minimum norm interpolation problem. Typical examples of the space $\mathscr{X}$ are the spaces $\ell_p(\mathbb{N})$, for $1\leq p\leq +\infty$.
Besides their theoretical importance, the spaces $\ell_p(\mathbb{N})$ have numerous applications, for instance, to signal processing and control theory \cite{ZSZ} and relaxation methods in linear programming \cite{VZ}.  When $1<p < 2$, the space $\ell_p(\mathbb{N})$ can be used to study the prediction of filtered symmetric-$\alpha$-stable processes; these are useful for modeling heavy-tailed phenomena \cite{Ch,CR,CR2,Kur}.  The case $p=1$ gives rise to a Banach algebra, as well as a sparsity-promoting geometry \cite{CX1}.  It has a unique importance in spectral factorization methods in signal processing \cite{BP}.

The regularization problem \eqref{genregprob0} may be viewed as a special case of a general problem. Suppose that $\mathscr{X}$, $\mathscr{Y}$ and $\mathscr{Z}$ are three real Banach spaces with dual spaces $\mathscr{X}^*,\mathscr{Y}^*$ and $\mathscr{Z}^*$, respectively, and there are constants $\rho>0$ and $1 \leq p < \infty$.  Let $\mathbf{y}_0 \in \mathscr{Y}$ be a fixed nonzero vector. Suppose that $A$ is a bounded linear operator from $\mathscr{X}$ to $\mathscr{Y}$ and $B$ is a bounded linear operator from $\mathscr{X}$ to $\mathscr{Z}$.
Consider the problem 

\begin{equation}\label{regprob}
   \inf\big\{ (\| {\mathbf{y}_0} - A{\mathbf{x}} \|^p_{\mathscr{Y}} + \rho\|B\mathbf{x}\|^p_{\mathscr{Z}})^{1/p} :\  \mathbf{x} \in \mathscr{X} \big\},
\end{equation}
and the corresponding general interpolation problem 
\begin{equation}\label{interpolprobalt}
   \inf\big\{ \|{\mathbf{x}}\|_{\mathscr{X}} :\  A\mathbf{x} = \mathbf{y}_0,\ \mathbf{x} \in \mathscr{X}  \big\}
\end{equation}
(inclusion of the power $p$ in the latter problem would have no meaningful effect).
Many data science problems fall into the setting of problem \eqref{regprob}. The well-known compressed sensing problem \cite{Candes, Donoho} may be reformulated in the form of \eqref{regprob}. Regularized learning  \cite{CS, EPP, MXY, Zhang-Xu-Zhang}, $l_1$-sparse regularization \cite{LSX, XuYe} and regularization models for inverse problems \cite{WX2019, WX2020, WX2021} are special examples of problem \eqref{regprob}.

In our approach we will introduce a normed space that is closely related to this loss functional. Specifically, the associated loss functional itself becomes a norm on a Banach space built from $\mathscr{Y}$ and $\mathscr{Z}$ and the ``training sample,'' as incorporated into $\mathbf{y}_0$, $A$ and $B$.  This reformulates the extremal problem \eqref{regprob} into an apparently simpler one, namely, to find a vector of minimum length in a convex subset of the constructed Banach space.
A  duality argument is used, transforming the original extremal problem into an equivalent one 
in the dual Banach space.  In practice, the dual extremal problem often involves only finitely many free parameters, and thus yields to numerical methods.  A solution to the dual problem is then used to identify solutions to the original problem \eqref{regprob}, by a correspondence between their norming functionals. The  extremal problems \eqref{regprob} and \eqref{interpolprobalt} respectively reduce to the problems \eqref{genregprob0} and \eqref{genintprob0} above by choosing $p=1$, $\mathscr{Y} = \mathbb{R}^m$ (with suitable norms attached) and $\mathscr{Z}=\mathscr{X}$. Moreover, $A$ is the operator from $\mathscr{X}$ to $\mathscr{Y}$ such that the $k$th entry of $A\mathbf{x}$ is $\langle \mathbf{x}, \mathbf{a}_k\rangle_{\mathscr{X}}$, $k\in\mathbb{N}_m$, and $B$ is the identity operator on $\mathscr{X}$. 
The existence and uniqueness of solutions will be addressed in due course.

%The main results of this paper include the above mentioned
%approach for solving a class of problems in machine learning, an associated %Representer theorem.  A new sparsity result that essentially tells us that the %solution to the regularized problem reduces to that of the unregularized problem %under broad conditions.  
%Examples are presented.  It is shown that when $\mathscr{X}$ is a Hilbert space, and %$\mathscr{Y}$ is finite dimensional, the known Representer theorem is reproduced.  It %is also shown that when $\mathscr{X}^* = \ell^1$, the general solution essentially %reduces to the problem of a previous paper \cite{CX1}.]

Our approach makes use of numerous standard tools from classical functional analysis.  These results are  stated in the course of their usage; for reference, their proofs are gathered together in an appendix. Throughout this paper, $\mathscr{A}$, $\mathscr{B}$, $\mathscr{X}$, $\mathscr{Y}$ and $\mathscr{Z}$ will denote a separable Banach space over the real scalars.  All subspaces are understood to be closed in the norm topology. If $p$ is a parameter satisfying $1 \leq p \leq \infty$, then $p'$ will denote its H\"{o}lder conjugate, so that $1/p + 1/p' = 1$.

This paper is organized as follows.  
In the section to follow we review classical results of functional analysis connecting an extremal problem in a Banach space and its dual extremal problem. The notion of the norming functional is also covered. In Section \ref{dirsumsec}, we develop a duality approach for solving the regularized extremal problem  \eqref{regprob}. We begin reviewing the concept of a direct sum of two Banach spaces, and a  family of norms that can be placed on the direct sum.   A duality argument is then used to recast the original extremal problem \eqref{regprob} into an equivalent dual problem.  The solution to the original problem is then obtained via norming functionals. 
In Section 4, we consider regularization in a specific Banach space $\ell_1(\mathbb{N})$ of sequences defined on $\mathbb{N}$. We show that the dual problem is indeed a finite dimensional optimization problem, which can be solved by linear programming. The resulting dual solution leads us to reformulate the original infinite optimization problem as an equivalent finite dimensional one. The later can be solved by numerical methods such as the fixed point proximity algorithm (FPPA) developed in \cite{LSXZ, MSX}.
We present three numerical experiments in Section 5 to demonstrate the effectiveness of the proposed duality approach. There follows an appendix containing proofs of standard results from functional analysis.

%%%%%%%%%%%%%%%%%%%%%%%%%%%%
\section{A Duality Approach to Extremal Problems}
%%%%%%%%%%%%%%%%%%%%%%%%%%%%
% Our approach for solving the regularization problem \eqref{regprob} is to first reformulate it as an extremal problem in a direct sum space related to both $\mathscr{Y}$ and $\mathscr{Z}$.
To prepare for developing a duality approach for solving the regularized extremal problems, we consider in this section an abstract extremal problem which may be taken as a  best approximation problem in a Banach space. By using the characterizations of a general best approximation problem in functional analysis and convex analysis,  we transform the extremal problem into an equivalent dual
problem. The relation between the solutions of the original problem and its dual problem is characterized by the norming functionals. We also consider a special case that the Banach spaces involving in the extremal problem have pre-duals.  

We begin with describing the extremal problem under investigation. Suppose that $\mathscr{A}$ and $\mathscr{B}$ are two real Banach spaces and $H$ is a bounded linear operator from $\mathscr{A}$ to $\mathscr{B}$. Let $\mathbf{b}_0\in\mathscr{B}$ be a fixed vector. We consider the extremal problem 
\begin{equation}\label{best_app}
\inf\left\{\|\mathbf{b}_0- H\mathbf{a} \|_{\mathscr{B}}:\ \mathbf{a} \in\mathscr{A} \right\}.
\end{equation}
By introducing a closed subspace $\mathscr{M}$ of $\mathscr{B}$ as
\begin{equation}\label{subspace}
\mathscr{M}:=\overline{\{ H\mathbf{a}:\ \mathbf{a}\in \mathscr{A} \}},
\end{equation}
the extremal problem \eqref{best_app} seeks a best approximation to $\mathbf{b}_0$ from the closed subspace $\mathscr{M}$. We will apply the theory of the best approximation to problem \eqref{best_app}. Most proofs of the results in this section are standard. For convenience to readers, we include the proofs in this section and the appendix.

We next establish the duality problem of problem \eqref{best_app} via a functional analytic approach. To this end, we recall some
notions in Banach spaces. The dual space $\mathscr{A}^*$ of a Banach space $\mathscr{A}$ is the collection of all continuous linear functionals on $\mathscr{A}$. 
For $\mathbf{a} \in \mathscr{A}$ and $\lambda\in\mathscr{A}^*$, we may write $\langle \mathbf{a}, \lambda \rangle_{\mathscr{A}}$ in place of $\lambda(\mathbf{a})$, to emphasize that the functional is operating on a vector in $\mathscr{A}$. We also recall that $\mathscr{A}^*$ is itself a Banach space under the norm
\begin{equation*}
\|\lambda\|_{\mathscr{A}^*}  
:=\sup \Bigg\{ \frac{|\langle\mathbf{a}, \lambda \rangle_{\mathscr{A}}| }{\|\mathbf{a}\|_{\mathscr{A}}} :\ \mathbf{a} \in\mathscr{A}\setminus\{ \mathbf{0}\}\Bigg\},\ \mbox{for all}\ \lambda\in\mathscr{A}^*.
\end{equation*}
We use the notation $\mathscr{N}^{\perp}$ to mean the annihilator of a subset $\mathscr{N}$ in $\mathscr{A}$, that is, the subspace of $\mathscr{A}^*$ given by
$\mathscr{N}^{\perp}:=\big\{\lambda\in \mathscr{A}^*:\ \langle \mathbf{a}, \lambda \rangle_{\mathscr{A}} = 0,\ \ \mbox{for all}\ \ \mathbf{a}\in\mathscr{N}\big\}.$
If $\mathscr{N}$ is a closed subspace of $\mathscr{A}$, then the quotient space $\mathscr{A}/\mathscr{N}$ is defined to be the collection of cosets $\mathbf{a}+\mathscr{N}$, for all $\mathbf{a} \in \mathscr{A}$. The quotient space is also a Banach space when endowed with the norm
$\|\mathbf{a}+\mathscr{N}\|_{\mathscr{A}/\mathscr{N}}:= \inf\left\{\|\mathbf{a}-\mathbf{c}\|_{\mathscr{A}}:\ \mathbf{c} \in \mathscr{N}\right\}
   =\dist_{\mathscr{A}}(\mathbf{a}, \mathscr{N}).$
We denote by $\kernel T$ the kernel 
$\kernel T:=\{\mathbf{a}\in\mathscr{A}: T\mathbf{a}=0\}$
of the linear operator $T:\mathscr{A}\rightarrow\mathscr{B}$ and by $\range T$ the range $\range T:=\{\mathbf{b}\in\mathscr{B}: \mathbf{b}=T\mathbf{a},\ \mathbf{a}\in\mathscr{A}\}$
of the operator $T$.

The following lemma identifies the dual of a closed subspace of a Banach space, and the dual of a quotient space. For its proof we refer to  \cite[Section III.10]{Con}. 
\begin{lemma}\label{dualssubs}
Let $\mathscr{N}$ be a closed subspace of a Banach space $\mathscr{A}$.  Then 
$(\mathscr{A}/\mathscr{N})^*$ is isometrically isomorphic to $\mathscr{N}^{\perp}$, and
$\mathscr{N}^*$ is isometrically isomorphic to $\mathscr{A}^*/\mathscr{N}^{\perp}$.
\end{lemma}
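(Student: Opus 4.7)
The plan is to exhibit explicit isometric isomorphisms realizing the two correspondences. For the first, I will lift linear functionals through the canonical quotient map; for the second, I will restrict functionals on $\mathscr{A}$ to $\mathscr{N}$ and invoke the Hahn-Banach theorem to invert the resulting map.

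First I would prove that $(\mathscr{A}/\mathscr{N})^*$ is isometrically isomorphic to $\mathscr{N}^{\perp}$. Let $q:\mathscr{A}\to\mathscr{A}/\mathscr{N}$ be the canonical quotient map and define $T:(\mathscr{A}/\mathscr{N})^*\to\mathscr{A}^*$ by $T\lambda:=\lambda\circ q$. Since $q$ annihilates $\mathscr{N}$, the image of $T$ lies in $\mathscr{N}^{\perp}$, and $T$ is plainly linear. Injectivity follows because $q$ is surjective, so $T\lambda=0$ forces $\lambda=0$. For surjectivity, given $\mu\in\mathscr{N}^{\perp}$ the formula $\hat\mu(\mathbf{a}+\mathscr{N}):=\mu(\mathbf{a})$ is well defined (since $\mu$ vanishes on $\mathscr{N}$), bounded, and satisfies $T\hat\mu=\mu$. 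For the isometry, the inequality $\|\mathbf{a}+\mathscr{N}\|_{\mathscr{A}/\mathscr{N}}\le\|\mathbf{a}\|_{\mathscr{A}}$ gives $\|T\lambda\|_{\mathscr{A}^*}\le\|\lambda\|_{(\mathscr{A}/\mathscr{N})^*}$, while the definition of the quotient norm provides, for each $\epsilon>0$ and each nonzero coset, a representative $\mathbf{a}+\mathbf{c}$ with $\|\mathbf{a}+\mathbf{c}\|_{\mathscr{A}}<\|\mathbf{a}+\mathscr{N}\|_{\mathscr{A}/\mathscr{N}}+\epsilon$, which yields the reverse inequality upon letting $\epsilon\to 0$.

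Next I would prove that $\mathscr{N}^*$ is isometrically isomorphic to $\mathscr{A}^*/\mathscr{N}^{\perp}$. Let $R:\mathscr{A}^*\to\mathscr{N}^*$ denote the restriction $R\Lambda:=\Lambda|_{\mathscr{N}}$. Its kernel is exactly $\mathscr{N}^{\perp}$, so $R$ descends to an injective linear map $\tilde R:\mathscr{A}^*/\mathscr{N}^{\perp}\to\mathscr{N}^*$. The Hahn-Banach theorem furnishes, for each $\phi\in\mathscr{N}^*$, a norm-preserving extension $\Phi\in\mathscr{A}^*$ with $\Phi|_{\mathscr{N}}=\phi$ and $\|\Phi\|_{\mathscr{A}^*}=\|\phi\|_{\mathscr{N}^*}$, so $\tilde R$ is surjective. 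Since $(\Lambda+\Psi)|_{\mathscr{N}}=\Lambda|_{\mathscr{N}}$ for every $\Psi\in\mathscr{N}^{\perp}$, one obtains
\[
\|\tilde R(\Lambda+\mathscr{N}^{\perp})\|_{\mathscr{N}^*}\le\|\Lambda+\Psi\|_{\mathscr{A}^*}\ \text{for every}\ \Psi\in\mathscr{N}^{\perp},
\]
hence $\|\tilde R(\Lambda+\mathscr{N}^{\perp})\|_{\mathscr{N}^*}\le\|\Lambda+\mathscr{N}^{\perp}\|_{\mathscr{A}^*/\mathscr{N}^{\perp}}$. The reverse inequality is obtained by choosing $\Phi$ to be a norm-preserving Hahn-Banach extension of $\Lambda|_{\mathscr{N}}$: then $\Lambda-\Phi\in\mathscr{N}^{\perp}$, so $\|\Lambda+\mathscr{N}^{\perp}\|_{\mathscr{A}^*/\mathscr{N}^{\perp}}\le\|\Phi\|_{\mathscr{A}^*}=\|\Lambda|_{\mathscr{N}}\|_{\mathscr{N}^*}$.

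The main obstacle is the isometric equality in the second statement. The algebraic identification of $\mathscr{N}^*$ with $\mathscr{A}^*/\mathscr{N}^{\perp}$ is routine once restriction is considered, but showing that the quotient norm coincides with the operator norm of the restriction genuinely requires a norm-preserving Hahn-Banach extension, and neither inequality is visible from the algebra alone. Everything else reduces to manipulations with the quotient norm and the definition of the annihilator.
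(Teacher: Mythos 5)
Your proof is correct and complete; the paper itself does not prove this lemma but instead cites \cite[Section III.10]{Con}, and your argument---identifying $(\mathscr{A}/\mathscr{N})^*$ with $\mathscr{N}^{\perp}$ via composition with the quotient map, and $\mathscr{A}^*/\mathscr{N}^{\perp}$ with $\mathscr{N}^*$ via restriction together with a norm-preserving Hahn--Banach extension---is exactly the standard proof found in that reference. Both isometry arguments (near-minimal coset representatives in the first part, and the extension/inf comparison in the second) are carried out correctly.
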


The above lemma enables us to transform the extremal problem
\eqref{best_app} into an equivalent dual problem. 

\begin{theorem}\label{best_app_dual}
Suppose that $\mathscr{A}$ and $\mathscr{B}$ are real Banach spaces and $\mathbf{b}_0\in\mathscr{B}$. Let $H$ be a bounded linear operator from $\mathscr{A}$ to $\mathscr{B}$ and $H^*$ its adjoint operator. Then there holds 
\begin{equation}\label{dual_problem_formula}
   \inf\big\{ \|\mathbf{b}_0 - H\mathbf{a} \|_{\mathscr{B}} :\ \mathbf{a} \in \mathscr{A}  \big\} 
   =\sup\Bigg\{|\langle \mathbf{b}_0, \lambda \rangle_{\mathscr{B}}|:\ \|\lambda\|_{\mathscr{B}^*}\leq1,\ \lambda \in \kernel H^*\Bigg\}.
\end{equation}
\end{theorem}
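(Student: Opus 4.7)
The plan is to chain together three standard identifications: express the left-hand side as a quotient-space norm, dualize that norm, and finally identify the annihilator with the kernel of the adjoint.

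First I would rewrite the infimum. By the very definition of the subspace $\mathscr{M}$ in \eqref{subspace} and continuity of the map $\mathbf{b}\mapsto\|\mathbf{b}_0-\mathbf{b}\|_{\mathscr{B}}$, we have
\[
\inf\{\|\mathbf{b}_0 - H\mathbf{a}\|_{\mathscr{B}}:\mathbf{a}\in\mathscr{A}\}
=\inf\{\|\mathbf{b}_0 - \mathbf{c}\|_{\mathscr{B}}:\mathbf{c}\in\mathscr{M}\}
=\|\mathbf{b}_0+\mathscr{M}\|_{\mathscr{B}/\mathscr{M}},
\]
so the left-hand side of \eqref{dual_problem_formula} equals the norm of the coset $\mathbf{b}_0+\mathscr{M}$ in the quotient Banach space $\mathscr{B}/\mathscr{M}$.

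Next I would dualize this norm. A standard Hahn--Banach corollary gives, for any element $\mathbf{u}$ of a Banach space $\mathscr{E}$, that $\|\mathbf{u}\|_{\mathscr{E}}=\sup\{|\mu(\mathbf{u})|:\mu\in\mathscr{E}^*,\,\|\mu\|_{\mathscr{E}^*}\le 1\}$. Applying this with $\mathscr{E}=\mathscr{B}/\mathscr{M}$ and $\mathbf{u}=\mathbf{b}_0+\mathscr{M}$, and invoking Lemma \ref{dualssubs} to identify $(\mathscr{B}/\mathscr{M})^*$ isometrically with $\mathscr{M}^{\perp}\subseteq\mathscr{B}^*$ (under which a functional $\mu$ on the quotient corresponds to a $\lambda\in\mathscr{M}^{\perp}$ satisfying $\mu(\mathbf{b}+\mathscr{M})=\langle\mathbf{b},\lambda\rangle_{\mathscr{B}}$ and $\|\mu\|=\|\lambda\|_{\mathscr{B}^*}$), yields
\[
\|\mathbf{b}_0+\mathscr{M}\|_{\mathscr{B}/\mathscr{M}}
=\sup\{|\langle\mathbf{b}_0,\lambda\rangle_{\mathscr{B}}|:\lambda\in\mathscr{M}^{\perp},\,\|\lambda\|_{\mathscr{B}^*}\le 1\}.
\]

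Finally I would identify the feasible set. For $\lambda\in\mathscr{B}^*$, continuity of $\lambda$ shows $\lambda\in\mathscr{M}^{\perp}$ iff $\langle H\mathbf{a},\lambda\rangle_{\mathscr{B}}=0$ for every $\mathbf{a}\in\mathscr{A}$; by the defining relation of the adjoint, $\langle H\mathbf{a},\lambda\rangle_{\mathscr{B}}=\langle\mathbf{a},H^*\lambda\rangle_{\mathscr{A}}$, so this is equivalent to $H^*\lambda=\mathbf{0}$, i.e.\ $\lambda\in\kernel H^*$. Substituting $\mathscr{M}^{\perp}=\kernel H^*$ into the previous display gives \eqref{dual_problem_formula}.

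The only subtle point is the passage from $\range H$ to $\mathscr{M}=\overline{\range H}$ in the annihilator step, but this is handled automatically by the continuity of the functionals in $\mathscr{B}^*$; everything else is a direct assembly of the Hahn--Banach duality, the quotient-norm identity, and Lemma \ref{dualssubs}.
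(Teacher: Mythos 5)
Your proof is correct and follows essentially the same route as the paper's: rewrite the infimum as the quotient norm $\|\mathbf{b}_0+\mathscr{M}\|_{\mathscr{B}/\mathscr{M}}$, dualize via Lemma \ref{dualssubs} to obtain the supremum over the unit ball of $\mathscr{M}^{\perp}$, and identify $\mathscr{M}^{\perp}=\kernel H^*$ through the adjoint relation. Your explicit remarks on the Hahn--Banach norm formula on the quotient and on the continuity argument passing from $\range H$ to its closure only make precise steps the paper leaves implicit.
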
 

\begin{proof}
Let $\mathscr{M}$ be the subspace of $\mathscr{B}$ defined by \eqref{subspace}. It follows that  
\begin{equation}\label{norm-quotient}
    \inf \big\{ \|\mathbf{b}_0- H\mathbf{a} \|_{\mathscr{B}}:\mathbf{a} \in \mathscr{A} \} = \|\mathbf{b}_0 + \mathscr{M}\|_{\mathscr{B}/\mathscr{M}}.
\end{equation}
Lemma \ref{dualssubs} ensures that $$
\|\mathbf{b}_0 + \mathscr{M}\|_{\mathscr{B}/\mathscr{M}}=\sup\Bigg\{|\langle \mathbf{b}_0, \lambda \rangle_{\mathscr{B}}|:\ \|\lambda\|_{\mathscr{B}^*}\leq1,\ \lambda \in \mathscr{M}^{\perp} \Bigg\}.
$$
Substituting the above equation into equation \eqref{norm-quotient}, we get that
\begin{equation*}
    \inf \big\{ \|\mathbf{b}_0- H\mathbf{a} \|_{\mathscr{B}}:\mathbf{a} \in \mathscr{A} \} = \sup\Bigg\{|\langle \mathbf{b}_0, \lambda \rangle_{\mathscr{B}}|:\ \|\lambda\|_{\mathscr{B}^*}\leq1,\ \lambda \in \mathscr{M}^{\perp} \Bigg\}.
\end{equation*}
It suffices to identify $\mathscr{M}^{\perp}$ with the kernel of the adjoint operator $H^*$. Note that $\lambda \in \mathscr{M}^{\perp}$ is equivalent to $\langle H\mathbf{a}, \lambda \rangle_{\mathscr{B}} = 0$, for all $\mathbf{a}\in \mathscr{A}$. It follows from $\langle H\mathbf{a}, \lambda \rangle_{\mathscr{B}}=\langle \mathbf{a}, H^*\lambda \rangle_{\mathscr{A}},\ \mbox{for all}\ \mathbf{a}\in \mathscr{A},\ \lambda \in\mathscr{B}^*$, 
that $\lambda \in \mathscr{M}^{\perp}$ if and only if $\langle \mathbf{a}, H^*\lambda \rangle_{\mathscr{A}}=0$ for all $\mathbf{a}\in \mathscr{A}$. The latter is equivalent to $H^*\lambda = 0$, which is in turn equivalent to $\lambda \in \kernel H^*$. Therefore, the desired duality formula of the theorem holds.
\end{proof}

The duality result stated in Theorem \ref{best_app_dual} can even be extended to the cases when $\mathscr{M}$ is replaced by certain convex sets \cite{Ubhaya1990, Weinstein1990, Weinstein1991}. This result can also be obtained by using a convex analytic approach. By using the Fenchel
duality theorem, the best approximation problem can be characterized in the next lemma \cite{Barbu-Precupanu}. 

\begin{lemma}\label{best_approximation}
Let $\mathscr{B}$ be a real Banach space. Suppose that $\mathscr{M}$ is a nonempty closed convex set in $\mathscr{B}$ and $\mathbf{b}_0\in\mathscr{B}\setminus\mathscr{M}$. Then there holds
$$
\inf\big\{ \|\mathbf{b}_0-\mathbf{b}\|_{\mathscr{B}}:\mathbf{b} \in \mathscr{M}\}
=\sup_{\|\lambda\|_{\mathscr{B}^*}\leq 1} \inf_{\mathbf{b}\in\mathscr{M}}\langle\mathbf{b}_0-\mathbf{b},\lambda\rangle_{\mathscr{B}}.
$$
\end{lemma}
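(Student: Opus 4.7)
The plan is to derive the identity by a direct application of the Fenchel--Rockafellar duality theorem. I would introduce the proper convex lower semicontinuous functions $f,g:\mathscr{B}\to(-\infty,+\infty]$ defined by
$$
f(\mathbf{b}) := \|\mathbf{b}_0-\mathbf{b}\|_{\mathscr{B}}, \qquad g(\mathbf{b}) := \iota_{\mathscr{M}}(\mathbf{b}),
$$
where $\iota_{\mathscr{M}}$ is the indicator functional taking the value $0$ on $\mathscr{M}$ and $+\infty$ elsewhere. With this notation the primal quantity on the left of the lemma coincides with $\inf_{\mathbf{b}\in\mathscr{B}}\{f(\mathbf{b})+g(\mathbf{b})\}$, which is the standard template for Fenchel duality.

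Next I would compute the two Fenchel conjugates. By the change of variables $\mathbf{c}=\mathbf{b}_0-\mathbf{b}$, the conjugate of $f$ reduces to a shifted version of the conjugate of the norm; since the latter is the indicator of the closed unit ball of $\mathscr{B}^*$, one obtains $f^*(\lambda)=\langle \mathbf{b}_0,\lambda\rangle_{\mathscr{B}}$ whenever $\|\lambda\|_{\mathscr{B}^*}\le 1$, and $f^*(\lambda)=+\infty$ otherwise. The conjugate of the indicator $g$ is, by definition, the support functional $g^*(\lambda)=\sup_{\mathbf{b}\in\mathscr{M}}\langle\mathbf{b},\lambda\rangle_{\mathscr{B}}$. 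Substituting these into the Fenchel duality identity
$$
\inf_{\mathbf{b}\in\mathscr{B}}\{f(\mathbf{b})+g(\mathbf{b})\}=\sup_{\lambda\in\mathscr{B}^*}\{-f^*(-\lambda)-g^*(\lambda)\},
$$
and using $-\sup_{\mathbf{b}\in\mathscr{M}}\langle\mathbf{b},\lambda\rangle_{\mathscr{B}}=\inf_{\mathbf{b}\in\mathscr{M}}\langle -\mathbf{b},\lambda\rangle_{\mathscr{B}}$, the right-hand side collapses to $\sup_{\|\lambda\|_{\mathscr{B}^*}\le 1}\inf_{\mathbf{b}\in\mathscr{M}}\langle\mathbf{b}_0-\mathbf{b},\lambda\rangle_{\mathscr{B}}$, which is exactly the dual expression in the lemma.

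The main obstacle to invoking \emph{strong} Fenchel duality is verifying the constraint qualification, but it is mild here. Since $f$ is everywhere finite and continuous on $\mathscr{B}$ and $\mathscr{M}$ is nonempty, there exists a point of $\mathrm{dom}\,g$ at which $f$ is continuous, which is enough for strong duality with attainment on the dual side. As a sanity check, the easy inequality ($\ge$) can be confirmed directly: for every $\mathbf{b}\in\mathscr{M}$ and every $\lambda\in\mathscr{B}^*$ with $\|\lambda\|_{\mathscr{B}^*}\le 1$, one has $\langle\mathbf{b}_0-\mathbf{b},\lambda\rangle_{\mathscr{B}}\le\|\mathbf{b}_0-\mathbf{b}\|_{\mathscr{B}}$, and taking the appropriate supremum and infimum yields the bound. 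The reverse inequality is the substantive content, and it is precisely what requires the Hahn--Banach separation principle packaged inside the Fenchel duality theorem; the hypothesis $\mathbf{b}_0\notin\mathscr{M}$ guarantees that the primal value is strictly positive, so no degenerate case arises.
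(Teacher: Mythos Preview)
Your proposal is correct and follows exactly the route indicated by the paper: the paper does not give its own proof of this lemma but cites it from Barbu--Precupanu as a consequence of the Fenchel duality theorem, and your argument carries out precisely that Fenchel--Rockafellar computation (with $f=\|\mathbf{b}_0-\cdot\|_{\mathscr{B}}$, $g=\iota_{\mathscr{M}}$, and the standard continuity-type constraint qualification).
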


We now apply Lemma \ref{best_approximation} to the extremal problem \eqref{best_app} and obtain its dual problem described in Theorem \ref{best_app_dual}. When the closed convex set $\mathscr{M}$ coincides with the closure of the range of the operator $H$ defined by \eqref{subspace}, 
we represent problem \eqref{best_app} as a best approximation problem
$$
\inf\big\{ \|\mathbf{b}_0 - H\mathbf{a} \|_{\mathscr{B}} :\ \mathbf{a} \in \mathscr{A}  \big\} =\inf\big\{ \|\mathbf{b}_0-\mathbf{b}\|_{\mathscr{B}}:\mathbf{b} \in \mathscr{M}\}.
$$
This together with Lemma \ref{best_approximation} ensures that 
\begin{equation}\label{best_approximation1}
    \inf\big\{ \|\mathbf{b}_0 - H\mathbf{a} \|_{\mathscr{B}} :\ \mathbf{a} \in \mathscr{A}  \big\} =\sup_{\|\lambda\|_{\mathscr{B}^*}\leq 1} \inf_{\mathbf{b}\in\mathscr{M}}\langle\mathbf{b}_0-\mathbf{b},\lambda\rangle_{\mathscr{B}}.
\end{equation}
We next rewrite the right hand side of equation \eqref{best_approximation1} as that of equation \eqref{dual_problem_formula}. 
It follows that 
\begin{equation}\label{best_approximation2}
\sup_{\|\lambda\|_{\mathscr{B}^*}\leq 1} \inf_{\mathbf{b}\in\mathscr{M}}\langle\mathbf{b}_0-\mathbf{b},\lambda\rangle_{\mathscr{B}}
=\sup_{\|\lambda\|_{\mathscr{B}^*}\leq 1} \left\{\langle\mathbf{b}_0,\lambda\rangle_{\mathscr{B}}+\inf_{\mathbf{b}\in\mathscr{M}}\langle-\mathbf{b},\lambda\rangle_{\mathscr{B}}\right\}.
\end{equation}
Since $\mathscr{M}$ is a closed subspace, we have that 
\begin{equation*}
\inf_{\mathbf{b}\in\mathscr{M}}\langle-\mathbf{b},\lambda\rangle_{\mathscr{B}}
=\left\{\begin{array}{cc}
0, &\lambda\in\mathscr{M}^{\perp},\\
-\infty, &\lambda\notin\mathscr{M}^{\perp}.
\end{array}
\right.
\end{equation*}
Substituting the above equation into equation \eqref{best_approximation2}, we obtain that 
\begin{equation*}
\sup_{\|\lambda\|_{\mathscr{B}^*}\leq 1} \inf_{\mathbf{b}\in\mathscr{M}}\langle\mathbf{b}_0-\mathbf{b},\lambda\rangle_{\mathscr{B}}
=\sup\left\{\langle\mathbf{b}_0,\lambda\rangle_{\mathscr{B}}:\|\lambda\|_{\mathscr{B}^*}\leq 1, \lambda\in\mathscr{M}^{\perp}\right\}.
\end{equation*}
% By noting that $\mathscr{M}^{\perp}$ is a subspace of $\mathscr{B}^*$, the above equation further leads to 
% \begin{equation*}
% \sup_{\|\lambda\|_{\mathscr{B}^*}\leq 1} \inf_{\mathbf{b}\in\mathscr{M}}\langle\mathbf{b}_0-\mathbf{b},\lambda\rangle_{\mathscr{B}}
% =\sup\left\{\langle\mathbf{b}_0,\lambda\rangle_{\mathscr{B}}:\|\lambda\|_{\mathscr{B}^*}= 1, \lambda\in\mathscr{M}^{\perp}\right\}.
% \end{equation*}
Again substituting the above equation into equation \eqref{best_approximation1}, we have that 
$$
\inf\big\{ \|\mathbf{b}_0 - H\mathbf{a} \|_{\mathscr{B}} :\ \mathbf{a} \in \mathscr{A}  \big\} =\sup\left\{\langle\mathbf{b}_0,\lambda\rangle_{\mathscr{B}}:\|\lambda\|_{\mathscr{B}^*}\leq1, \lambda\in\mathscr{M}^{\perp}\right\},
$$
which together with $\mathscr{M}^{\perp}=\kernel H^*$ leads to equation \eqref{dual_problem_formula}.

% The original extremal problem \eqref{best_app}
% is challenging to solve numerically, since the space $\mathscr{A}$ is typically infinite-dimensional.
% In practical applications, however, it often happens that $\mathscr{B}$ or $\kernel H^*$ is a finite-dimensional space arising in connection with the training data.  In this situation, Theorem \ref{best_app_dual} reduces the infimum problem \eqref{best_app} to a problem containing only finitely many free parameters, which could be solvable by numerical methods.

We now turn to characterizing the relation between the solutions of the extremal problem
\eqref{best_app} and its dual problem by utilizing the notion of a {\it norming functional.}  Recall that if $\mathbf{a} \in \mathscr{A}$ is a nonzero vector, a norming functional $\lambda$ of $\mathbf{a}$ is an element of $\mathscr{A}^*$  satisfying
$\|\lambda\|_{\mathscr{A}^*} =1$ and $\langle \mathbf{a}, \lambda \rangle_{\mathscr{A}} = \|\mathbf{a}\|_{\mathscr{A}}.$ The existence of such $\lambda$ is ensured by the Hahn-Banach theorem, though generally it need not be unique.  
Similarly, if $\lambda \in \mathscr{A}^*$, and $\mathbf{a} \in \mathscr{A}$ is a unit vector satisfying $\langle \mathbf{a}, \lambda\rangle_{\mathscr{A}} = \|\lambda\|_{\mathscr{A}^*}$, then we say that $\mathbf{a}$ is norming for $\lambda$. Such a vector $\mathbf{a}$ always exists if the space $\mathscr{A}$ is reflexive; more generally, we can always find a sequence of vectors that are approximately norming. The norming functionals are related to the duality mapping  $\Phi_{\mathscr{A}}$ from $\mathscr{A}$ to the collection of all subsets in $\mathscr{A}^*$, defined for all $\mathbf{a}\in\mathscr{A}$ by
$\Phi_{\mathscr{A}}(\mathbf{a}):=\{\lambda\in\mathscr{A}^*: \|\lambda\|_{\mathscr{A}^*}=\|\mathbf{a}\|_{\mathscr{A}}, \langle\mathbf{a},\lambda\rangle_{\mathscr{A}}=\|\lambda\|_{\mathscr{A}^*}
\|\mathbf{a}\|_{\mathscr{A}}\}.$
The norming functionals of $\mathbf{a}\in\mathscr{A}$, multiplied by $\|\mathbf{a}\|_{\mathscr{A}}$, are precisely the elements of the set $\Phi_{\mathscr{A}}(\mathbf{a})$.

The solutions of the original extremal problem \eqref{best_app} and its dual problem described in Theorem \ref{best_app_dual} are closely related as follows.
\begin{proposition}\label{norming_functional_general}
Suppose that $\mathscr{A}$ and $\mathscr{B}$ are real Banach spaces and $\mathbf{b}_0\in\mathscr{B}$. Let $H$ be a bounded linear operator from $\mathscr{A}$ to $\mathscr{B}$ and $H^*$ its adjoint operator. If $\hat{\mathbf{a}} \in \mathscr{A}$ has the property that 
\begin{equation}\label{valofinf0}
\|\mathbf{b}_0-H\hat{\mathbf{a}}\|_{\mathscr{B}}=\inf \big\{ \|\mathbf{b}_0 - H\mathbf{a} \|_{\mathscr{B}} :\ \mathbf{a} \in \mathscr{A}  \big\}>0,
\end{equation}
and $\hat{\lambda} \in \kernel H^*$ satisfies $\|\hat{\lambda}\|_{\mathscr{B}^*}=1$ and the condition
% \begin{equation}\label{valofinf}
% \frac{\langle \mathbf{b}_0, \hat{\lambda}\rangle_{\mathscr{B}}}{\|\hat{\lambda}\|_{\mathscr{B}^*}} =\sup \Bigg\{ \frac{|\langle \mathbf{b}_0, \lambda\rangle_{\mathscr{B}}|}{\|\lambda\|_{\mathscr{B}^*}} :\ \lambda \in \kernel  H^*\setminus\{0\}  \Bigg\}>0,
% \end{equation}   
\begin{equation}\label{valofinf}
\langle \mathbf{b}_0, \hat{\lambda}\rangle_{\mathscr{B}} =\sup \Bigg\{ |\langle \mathbf{b}_0, \lambda\rangle_{\mathscr{B}}|:\|\lambda\|_{\mathscr{B}^*}\leq1, \ \lambda \in \kernel  H^* \Bigg\}>0,
\end{equation}  
then $(\mathbf{b}_0 - H\hat{\mathbf{a}})/\|\mathbf{b}_0 - H\hat{\mathbf{a}}\|_{\mathscr{B}}$ is norming for $\hat{\lambda}$ and
$\hat{\lambda}$ is norming for $\mathbf{b}_0 - H\hat{\mathbf{a}}$.
\end{proposition}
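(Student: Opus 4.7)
The plan is to exploit the duality identity from Theorem \ref{best_app_dual} together with the kernel condition $\hat{\lambda}\in\kernel H^*$ to show directly that the pairing $\langle \mathbf{b}_0-H\hat{\mathbf{a}},\hat{\lambda}\rangle_{\mathscr{B}}$ simultaneously realizes the norm of $\mathbf{b}_0-H\hat{\mathbf{a}}$ in $\mathscr{B}$ and the norm of $\hat{\lambda}$ in $\mathscr{B}^*$. Let $d$ denote the common value of the two sides of \eqref{dual_problem_formula}, which is strictly positive by \eqref{valofinf0} and \eqref{valofinf}. By hypothesis $\|\mathbf{b}_0-H\hat{\mathbf{a}}\|_{\mathscr{B}}=d$ and $\langle \mathbf{b}_0,\hat{\lambda}\rangle_{\mathscr{B}}=d$, with $\|\hat{\lambda}\|_{\mathscr{B}^*}=1$.

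First I would eliminate the $H\hat{\mathbf{a}}$ contribution from the pairing. Using the definition of adjoint and the fact that $\hat{\lambda}\in\kernel H^*$, I have
\begin{equation*}
\langle H\hat{\mathbf{a}},\hat{\lambda}\rangle_{\mathscr{B}}=\langle \hat{\mathbf{a}},H^*\hat{\lambda}\rangle_{\mathscr{A}}=0,
\end{equation*}
so that $\langle \mathbf{b}_0-H\hat{\mathbf{a}},\hat{\lambda}\rangle_{\mathscr{B}}=\langle \mathbf{b}_0,\hat{\lambda}\rangle_{\mathscr{B}}=d$.

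Now both conclusions fall out immediately from the definition of a norming functional. On one hand, $\hat{\lambda}$ has unit dual norm and satisfies $\langle \mathbf{b}_0-H\hat{\mathbf{a}},\hat{\lambda}\rangle_{\mathscr{B}}=d=\|\mathbf{b}_0-H\hat{\mathbf{a}}\|_{\mathscr{B}}$, which is precisely the defining property of $\hat{\lambda}$ being norming for the (nonzero) vector $\mathbf{b}_0-H\hat{\mathbf{a}}$. On the other hand, dividing by $d$ gives a unit vector $\mathbf{u}:=(\mathbf{b}_0-H\hat{\mathbf{a}})/\|\mathbf{b}_0-H\hat{\mathbf{a}}\|_{\mathscr{B}}\in\mathscr{B}$ with $\langle \mathbf{u},\hat{\lambda}\rangle_{\mathscr{B}}=1=\|\hat{\lambda}\|_{\mathscr{B}^*}$, so $\mathbf{u}$ is norming for $\hat{\lambda}$.

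There is essentially no obstacle here: the content of the proposition is really just the observation that under the duality equality of Theorem \ref{best_app_dual}, the primal optimizer and the dual optimizer automatically form a norming pair, because the adjoint kernel condition lets one replace $\mathbf{b}_0$ by $\mathbf{b}_0-H\hat{\mathbf{a}}$ in the dual pairing without changing its value. The only point to double-check while writing is that $d>0$, which is guaranteed by \eqref{valofinf0}, so that the normalization $\mathbf{u}$ is well defined and the definitions of norming functional and norming vector apply without degeneracy.
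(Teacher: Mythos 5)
Your proposal is correct and follows essentially the same route as the paper's own proof: both use $\hat{\lambda}\in\kernel H^*$ to show $\langle \mathbf{b}_0-H\hat{\mathbf{a}},\hat{\lambda}\rangle_{\mathscr{B}}=\langle \mathbf{b}_0,\hat{\lambda}\rangle_{\mathscr{B}}$, then invoke the duality identity \eqref{dual_problem_formula} together with \eqref{valofinf0} and \eqref{valofinf} to equate this pairing with $\|\mathbf{b}_0-H\hat{\mathbf{a}}\|_{\mathscr{B}}$, from which both norming conclusions follow by definition. Your only cosmetic difference is naming the common optimal value $d$ up front rather than chaining the equalities, which does not change the argument.
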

\begin{proof}
It is clear that the proposed norming vectors are of unit norm. It follows that 
$$
\langle\mathbf{b}_0 - H\hat{\mathbf{a}}, \hat{\lambda} \rangle_{\mathscr{B}} 
=\langle\mathbf{b}_0, \hat{\lambda} \rangle_{\mathscr{B}} - \langle H\hat{\mathbf{a}}, \hat{\lambda} \rangle_{\mathscr{B}} =\langle\mathbf{b}_0, \hat{\lambda} \rangle_{\mathscr{B}} - \langle \hat{\mathbf{a}}, H^* \hat{\lambda} \rangle_{\mathscr{A}}.
$$
Noting that $\hat\lambda \in \kernel H^*$, we observe from the above equation that
$\langle\mathbf{b}_0 - H\hat{\mathbf{a}}, \hat{\lambda} \rangle_{\mathscr{B}} 
=\langle\mathbf{b}_0, \hat{\lambda} \rangle_{\mathscr{B}},$
which, together with equation \eqref{valofinf}, further leads to 
$$
\langle\mathbf{b}_0 - H\hat{\mathbf{a}}, \hat{\lambda} \rangle_{\mathscr{B}} 
=\sup \{ |\langle \mathbf{b}_0, \lambda\rangle_{\mathscr{B}}|:\|\lambda\|_{\mathscr{B}^*}\leq1, \ \lambda \in \kernel  H^* \}.
$$
Substituting equations  \eqref{dual_problem_formula} and \eqref{valofinf0} into the above equation, we get that 
$\langle\mathbf{b}_0 - H\hat{\mathbf{a}}, \hat{\lambda} \rangle_{\mathscr{B}} 
=\|\mathbf{b}_0-H\hat{\mathbf{a}}\|_{\mathscr{B}}.$
It is obvious that   $\hat{\lambda}$ is norming for $\mathbf{b}_0 - H\hat{\mathbf{a}}$. Moreover, the conclusion that $(\mathbf{b}_0 - H\hat{\mathbf{a}})/\|\mathbf{b}_0 - H\hat{\mathbf{a}}\|_{\mathscr{B}}$ is norming for $\hat{\lambda}$ can be obtained by dividing both sides of the above equation by $\|\mathbf{b}_0-H\hat{\mathbf{a}}\|_{\mathscr{B}}$ and noting that $\|\hat{\lambda}\|_{\mathscr{B}^*}=1$. 
\end{proof}

% \begin{remark}\label{roadmaprem}
% We have established the following road map for solving \eqref{best_app}.  First, we transform the given problem into its dual problem.  We solve the dual problem, perhaps by applying numerical methods, to obtain the dual solution $\hat{\lambda}$.  We next identify all of the vectors that are norming for $\hat{\lambda}$.  These resulting  vectors, multiplied by the value of the infimum in \eqref{valofinf}, are candidates for $\mathbf{b}_0-H\hat{\mathbf{a}}$.  For each such candidate $\mathbf{b}$, it remains to solve the associated linear algebra problem $\mathbf{b} = \mathbf{b}_0 - H\hat{\mathbf{a}}$ to obtain a solution $\hat{\mathbf{a}}$ to the original problem \eqref{best_app}.
% \end{remark}

In the rest of this section, we consider the special case that the Banach spaces $\mathscr{A}$ and $\mathscr{B}$ have pre-dual spaces. A normed space $\mathscr{A}_{*}$ is called a pre-dual space of a Banach space $\mathscr{A}$ if  $(\mathscr{A}_{*})^{*}=\mathscr{A}.$ Since the natural map is the isometrically imbedding map from $\mathscr{A}_*$ into $\mathscr{A}^*$, any element in $\mathscr{A}_*$ can be viewed as a bounded linear functional on $\mathscr{A}$, that is, an element in $\mathscr{A}^*$ and there holds 
$\langle \ell,\mathbf{a}\rangle_{\mathscr{A}_{*}}=\langle \mathbf{a},\ell\rangle_{\mathscr{A}}$, for all $\ell\in\mathscr{A}_*$ and all $\mathbf{a}\in\mathscr{A}$. The pre-dual space $\mathscr{A}_{*}$ guarantees that the Banach space $\mathscr{A}$ enjoys the weak$^{*}$ topology. The weak$^*$ topology of $\mathscr{A}$ is the smallest topology for $\mathscr{A}$ such that, for each $\ell\in\mathscr{A}_{*}$, the linear functional $\mathbf{a}\rightarrow \langle \mathbf{a},\ell\rangle_{\mathscr{A}}$ on $\mathscr{A}$ is continuous with respect to the topology. 
For a subset $\mathcal{N}$ of $\mathscr{A}$, we denote by $\overline{\mathcal{N}}^{w^*}$ the closure of $\mathcal{N}$ in the weak$^*$ topology of $\mathscr{A}$. We also give the name $\mathscr{N}_{\perp}$ to the subspace of $\mathscr{A}_*$ 
$$          
\mathscr{N}_{\perp} :=  \big\{ \ell \in  \mathscr{A}_*:\  \langle \mathbf{a},\ell  \rangle_{\mathscr{A}} = 0, \ \mbox{for all}\ \mathbf{a}\in \mathscr{N} \big\}.
$$

In the special case that the Banach spaces $\mathscr{A}$ and $\mathscr{B}$ have pre-duals $\mathscr{A}_*$ and $\mathscr{B}_*$, respectively, a similar result to Theorem \ref{best_app_dual} holds. 

\begin{theorem}\label{best_app_dual_pre_dual}
Suppose that $\mathscr{A}$ and $\mathscr{B}$ are real Banach spaces having the respective pre-dual spaces $\mathscr{A}_*$ and $\mathscr{B}_*$, $\mathbf{b}_0 \in \mathscr{B}$. Let $H$ be the adjoint operator of a  bounded linear operator $H_*$ from $\mathscr{B}_*$ to $\mathscr{A}_*$. If $\mathscr{M}$ defined by \eqref{subspace} satisfies $\overline{\mathscr{M}}^{w^*} =\mathscr{M}$, then there holds 
\begin{equation}\label{dual_problem_formula_pre_dual}
 \inf \big\{ \|\mathbf{b}_0 - H\mathbf{a} \|_{\mathscr{B}} :\ \mathbf{a} \in \mathscr{A} \big\} = \sup \Bigg\{ |\langle \mathbf{b}_0, \ell \rangle_{\mathscr{B}}| :\ \|\ell\|_{\mathscr{B}_*}\leq1,\ \ell \in \kernel H_* \Bigg\}.
\end{equation}
\end{theorem}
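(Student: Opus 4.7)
The plan is to run the proof of Theorem \ref{best_app_dual} again, but with every appeal to $\mathscr{B}^*$ replaced by one to the pre-dual $\mathscr{B}_*$. As a first step, I would recycle the identity
$$\inf \big\{ \|\mathbf{b}_0 - H\mathbf{a}\|_{\mathscr{B}} :\ \mathbf{a} \in \mathscr{A}\big\} = \|\mathbf{b}_0 + \mathscr{M}\|_{\mathscr{B}/\mathscr{M}},$$
which follows from the definition of $\mathscr{M}$ and the quotient norm and uses no pre-dual structure.

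The key ingredient I need is a pre-dual analogue of Lemma \ref{dualssubs}: if $\mathscr{M}$ is a \emph{weak$^*$-closed} subspace of $\mathscr{B}=(\mathscr{B}_*)^*$, then $\mathscr{B}/\mathscr{M}$ is isometrically isomorphic to $(\mathscr{M}_\perp)^*$ via the natural pairing $\langle \mathbf{b}+\mathscr{M},\ell\rangle := \langle \mathbf{b},\ell\rangle_{\mathscr{B}}$ for $\ell \in \mathscr{M}_\perp$, which is well-defined since $\ell$ vanishes on $\mathscr{M}$. Once this identification is in hand, applying the definition of the dual norm yields
$$\|\mathbf{b}_0 + \mathscr{M}\|_{\mathscr{B}/\mathscr{M}} = \sup\big\{|\langle \mathbf{b}_0,\ell\rangle_{\mathscr{B}}| :\ \|\ell\|_{\mathscr{B}_*} \leq 1,\ \ell \in \mathscr{M}_\perp\big\}.$$

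The last step is to identify $\mathscr{M}_\perp$ with $\kernel H_*$. Since $H=(H_*)^*$, the usual adjoint identity gives $\langle H\mathbf{a},\ell\rangle_{\mathscr{B}}=\langle \mathbf{a},H_*\ell\rangle_{\mathscr{A}}$ for all $\mathbf{a}\in\mathscr{A}$ and $\ell\in\mathscr{B}_*$. Hence $\ell\in\mathscr{M}_\perp$ is equivalent to $\langle \mathbf{a},H_*\ell\rangle_{\mathscr{A}}=0$ for all $\mathbf{a}\in\mathscr{A}$, which in turn is equivalent to $H_*\ell = 0$. Substituting this into the previous display yields \eqref{dual_problem_formula_pre_dual}.

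The main obstacle is establishing the pre-dual identification $\mathscr{B}/\mathscr{M} \cong (\mathscr{M}_\perp)^*$: this is exactly where the weak$^*$-closedness hypothesis $\overline{\mathscr{M}}^{w^*}=\mathscr{M}$ is essential. The bipolar theorem (in the weak$^*$ duality between $\mathscr{B}_*$ and $\mathscr{B}$) gives $(\mathscr{M}_\perp)^\perp \cap \mathscr{B} = \overline{\mathscr{M}}^{w^*}$, so without the hypothesis one would only recover the weak$^*$-closure of $\mathscr{M}$ rather than $\mathscr{M}$ itself, and the distance $\dist_{\mathscr{B}}(\mathbf{b}_0,\mathscr{M})$ would not match the supremum on the right. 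The isometry itself follows from a standard Hahn--Banach argument in the pairing $(\mathscr{B}_*,\mathscr{B})$: every coset representative realizes its quotient norm against some $\ell\in\mathscr{M}_\perp$ with $\|\ell\|_{\mathscr{B}_*}\leq 1$. I would most likely place this isometric identification as a separate lemma in the appendix, then deduce the theorem in a few lines as sketched above.
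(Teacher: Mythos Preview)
Your proposal is correct and follows essentially the same route as the paper. The only organizational difference is that the paper does not introduce a separate ``pre-dual analogue'' of Lemma~\ref{dualssubs}; instead it applies the \emph{second} part of Lemma~\ref{dualssubs} directly with $\mathscr{N}=\kernel H_*\subset\mathscr{B}_*$, obtaining $(\kernel H_*)^*\cong \mathscr{B}/(\kernel H_*)^{\perp}$, and then uses the bipolar identity $(\mathscr{M}_\perp)^\perp=\overline{\mathscr{M}}^{w^*}=\mathscr{M}$ (citing \cite[Prop.~2.6.6]{Meg}) together with your identification $\mathscr{M}_\perp=\kernel H_*$ to conclude $(\kernel H_*)^\perp=\mathscr{M}$. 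This yields exactly your intended isometry $\mathscr{B}/\mathscr{M}\cong(\mathscr{M}_\perp)^*$ without the need for a new lemma, so you can shorten your write-up accordingly.
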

\begin{proof}
Again, associated with the closed subspace $\mathscr{M}$ of $\mathscr{B}$ defined by \eqref{subspace}, there holds equation \eqref{norm-quotient}. We apply the second part of Lemma \ref{dualssubs} to the right hand side of equation \eqref{norm-quotient}. To this end, we need to show that 
$(\kernel H_*)^{\perp}=  \mathscr{M}.$ 
We first identify  $\mathscr{M}_{\perp}$ with the kernel of the operator $H_*$. Specifically, by the definition of $\mathscr{M}_{\perp}$, we have that $\ell \in \mathscr{M}_{\perp}$ if and only if $\langle  H\mathbf{a}, \ell\rangle_{\mathscr{B}} = 0$, for all $\mathbf{a}\in \mathscr{A}$. The latter is equivalent to 
$\langle \mathbf{a}, H_*\ell \rangle_{\mathscr{A}}=0,\ \mbox{for all}\ \mathbf{a}\in \mathscr{A}.$ That is, $\ell \in \kernel H_*$. Thus, we conclude that $\mathscr{M}_{\perp}=\kernel H_*$, which further leads to $(\kernel H_*)^{\perp}=(\mathscr{M}_{\bot})^{\bot}$. It follows from Proposition 2.6.6 of \cite{Meg} that $(\mathscr{M}_{\bot})^{\bot}=\overline{\mathscr{M}}^{w*}$. We then get that 
$(\kernel H_*)^{\perp}= \overline{\mathscr{M}}^{w*}$. This together with the assumption that $\overline{\mathscr{M}}^{w^*} =\mathscr{M}$ yields that $(\kernel H_*)^{\perp}=\mathscr{M}$.
By employing Lemma \ref{dualssubs}, we get that
% $$
% \|\mathbf{b}_0 + \mathscr{M}\|_{\mathscr{B}/\mathscr{M}}=\sup \Bigg\{ \frac{|\langle \ell, \mathbf{b}_0\rangle_{\mathscr{B}_*}|}{\|\ell\|_{\mathscr{B}_*}} :\ \ell \in \kernel H_* \setminus \{0\}\Bigg\}.
% $$
$\|\mathbf{b}_0 + \mathscr{M}\|_{\mathscr{B}/\mathscr{M}}=\sup \{ |\langle \mathbf{b}_0,\ell \rangle_{\mathscr{B}}| :\ \|\ell\|_{\mathscr{B}_*}\leq1,\ \ell \in \kernel H_* \}.$
Substituting the above equation into equation \eqref{norm-quotient}, we obtain the desired equation \eqref{dual_problem_formula_pre_dual}.
\end{proof}

We note that when $\mathscr{B}$ is an finite-dimensional Banach space, the assumption $\overline{\mathscr{M}}^{w^*} =\mathscr{M}$ holds. 
% Again, in practice when  $\mathscr{B}$ or $\kernel H_*$ is  finite-dimensional, Theorem \ref{best_app_dual_pre_dual} reduces the original problem with infinitely many free parameters into a dual problem with finitely many free parameters.  
The added advantage in this particular situation is that the pre-dual of a space $\mathscr{A}$ is often a simpler or smaller space than its dual. For example, if $\mathscr{A} = \ell^1(\mathbb{N})$, the space of absolutely summable sequences, then its dual is $\mathscr{A}^* = \ell^{\infty}(\mathbb{N})$; however, its pre-dual is $\mathscr{A}_* = c_0(\mathbb{N})$, the space of sequences convergent to zero. 

For the special case when $\mathscr{A}$ and $\mathscr{B}$ have pre-duals, we can also characterize the relation between the solutions of the original problem and its dual problem. The proof of the following proposition is similar to that of Proposition \ref{norming_functional_general} and thus is omitted.

\begin{proposition}\label{norming_functional_predual}
Suppose that the assumptions in Theorem \ref{best_app_dual_pre_dual} hold. If $\hat{\mathbf{a}} \in \mathscr{A}$ has the property that 
\begin{equation*}\label{valofinf0_predual}
\|\mathbf{b}_0-H\hat{\mathbf{a}}\|_{\mathscr{B}}=\inf \big\{ \|\mathbf{b}_0 - H\mathbf{a} \|_{\mathscr{B}} :\ \mathbf{a} \in \mathscr{A}  \big\}>0,
\end{equation*}
and $\hat{\ell} \in \kernel H_*$ satisfies $\|\hat{\ell}\|_{\mathscr{B}_*}=1$ and the condition
\begin{equation*}\label{valofinf_predual}
\langle \mathbf{b}_0, \hat{\ell} \rangle_{\mathscr{B}} =\sup \Big\{ |\langle  \mathbf{b}_0,  \ell\rangle_{\mathscr{B}}|: \|\ell\|_{\mathscr{B}_*}\leq1, \ \ell \in \kernel H_*\Big\}>0,
\end{equation*}   
then $(\mathbf{b}_0 - H\hat{\mathbf{a}})/\|\mathbf{b}_0 - H\hat{\mathbf{a}}\|_{\mathscr{B}}$ is norming for $\hat{\ell}$, and
$\hat{\ell}$ is norming for $\mathbf{b}_0 - H\hat{\mathbf{a}}$.
\end{proposition}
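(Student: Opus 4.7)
The plan is to adapt the proof of Proposition \ref{norming_functional_general} essentially verbatim, replacing the adjoint $H^*$ with the pre-adjoint $H_*$ and invoking Theorem \ref{best_app_dual_pre_dual} in place of Theorem \ref{best_app_dual}. The hypothesis $\|\hat{\ell}\|_{\mathscr{B}_*}=1$ immediately handles the unit-norm requirement for $\hat{\ell}$, while the assumption $\|\mathbf{b}_0-H\hat{\mathbf{a}}\|_{\mathscr{B}}>0$ ensures that the normalization $(\mathbf{b}_0-H\hat{\mathbf{a}})/\|\mathbf{b}_0-H\hat{\mathbf{a}}\|_{\mathscr{B}}$ is well-defined and of unit norm in $\mathscr{B}$.

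The key computation is to evaluate $\langle \mathbf{b}_0-H\hat{\mathbf{a}}, \hat{\ell}\rangle_{\mathscr{B}}$. By linearity and the adjoint identity $\langle H\mathbf{a}, \ell\rangle_{\mathscr{B}} = \langle \mathbf{a}, H_*\ell\rangle_{\mathscr{A}}$, which is a direct consequence of $H=(H_*)^*$ together with the isometric embedding of $\mathscr{B}_*$ into $\mathscr{B}^*$, this pairing equals $\langle \mathbf{b}_0, \hat{\ell}\rangle_{\mathscr{B}}-\langle \hat{\mathbf{a}}, H_*\hat{\ell}\rangle_{\mathscr{A}}$. Since $\hat{\ell}\in\kernel H_*$, the second term vanishes. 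The hypothesis identifies $\langle \mathbf{b}_0, \hat{\ell}\rangle_{\mathscr{B}}$ as the supremum appearing on the right-hand side of \eqref{dual_problem_formula_pre_dual}, and Theorem \ref{best_app_dual_pre_dual} identifies that supremum with $\inf\{\|\mathbf{b}_0-H\mathbf{a}\|_{\mathscr{B}}:\mathbf{a}\in\mathscr{A}\}$, which by the extremality of $\hat{\mathbf{a}}$ equals $\|\mathbf{b}_0-H\hat{\mathbf{a}}\|_{\mathscr{B}}$. Chaining these equalities yields $\langle \mathbf{b}_0-H\hat{\mathbf{a}}, \hat{\ell}\rangle_{\mathscr{B}} = \|\mathbf{b}_0-H\hat{\mathbf{a}}\|_{\mathscr{B}}$.

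From this single identity both conclusions follow directly. Combined with $\|\hat{\ell}\|_{\mathscr{B}_*}=1$, it says that $\hat{\ell}$ is norming for $\mathbf{b}_0-H\hat{\mathbf{a}}$; dividing both sides by $\|\mathbf{b}_0-H\hat{\mathbf{a}}\|_{\mathscr{B}}$ shows that the normalized error vector is norming for $\hat{\ell}$. The only point requiring care, and hence the main conceptual obstacle distinct from Proposition \ref{norming_functional_general}, is the dual interpretation of $\hat{\ell}$: on one hand it lives in $\mathscr{B}_*$, where both the constraint $H_*\hat{\ell}=0$ and the norm normalization are formulated; on the other hand, the pairings $\langle \cdot, \hat{\ell}\rangle_{\mathscr{B}}$ treat it as a functional on $\mathscr{B}$. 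Because the natural embedding $\mathscr{B}_*\hookrightarrow\mathscr{B}^*$ is isometric, $\|\hat{\ell}\|_{\mathscr{B}_*}=\|\hat{\ell}\|_{\mathscr{B}^*}$, so the notion of norming functional transfers consistently between the two pictures. Apart from this bookkeeping, nothing new beyond the argument already employed for Proposition \ref{norming_functional_general} is needed.
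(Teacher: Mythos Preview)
Your proof is correct and follows exactly the approach the paper intends: the paper omits the proof entirely, noting only that it is ``similar to that of Proposition \ref{norming_functional_general},'' and your adaptation---replacing $H^*$ by $H_*$, invoking Theorem \ref{best_app_dual_pre_dual} in place of Theorem \ref{best_app_dual}, and tracking the isometric embedding $\mathscr{B}_*\hookrightarrow\mathscr{B}^*$---is precisely that.
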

% \begin{proof}
% Since $H$ is the adjoint operator of $H_*$, we have that 
% $$
% \langle\hat{\ell},\mathbf{b}_0 - H\hat{\mathbf{a}} \rangle_{\mathscr{B}_*} 
% =\langle\hat{\ell},\mathbf{b}_0 \rangle_{\mathscr{B}_*} - \langle \hat{\ell}, H\hat{\mathbf{a}} \rangle_{\mathscr{B}_*} =\langle\hat{\ell},\mathbf{b}_0 \rangle_{\mathscr{B}_*} - \langle H_* \hat{\ell}, \hat{\mathbf{a}}  \rangle_{\mathscr{A}_*}.
% $$
% Noting that $\hat{\ell} \in \kernel H_*$, the above equation further leads to 
% $$
% \langle\hat{\ell},\mathbf{b}_0 - H\hat{\mathbf{a}} \rangle_{\mathscr{B}_*} 
% =\langle\hat{\ell},\mathbf{b}_0 \rangle_{\mathscr{B}_*},
% $$
% Substituting equation \eqref{valofinf_predual} into the above equation, we get that 
% $$
% \langle\hat{\ell},\mathbf{b}_0 - H\hat{\mathbf{a}} \rangle_{\mathscr{B}_*} 
% =\|\hat{\ell}\|_{\mathscr{B}_*}\sup \Bigg\{ \frac{|\langle \ell, \mathbf{b}_0  \rangle_{\mathscr{B}_*}|}{\|\ell\|_{\mathscr{B}_*}} :\ \ell \in \kernel H_* \setminus \{0\}\Bigg\}.
% $$
% Combining equations  \eqref{dual_problem_formula_pre_dual} with \eqref{valofinf0_predual}, we rewrite the above equation as
% $$
% \langle\hat{\ell},\mathbf{b}_0 - H\hat{\mathbf{a}} \rangle_{\mathscr{B}_*} 
% =\|\hat{\ell}\|_{\mathscr{B}_*}\|\mathbf{b}_0-H\hat{\mathbf{a}}\|_{\mathscr{B}}.
% $$
% We can get the two desired conclusions by dividing both sides of the above equation by $\|\mathbf{b}_0-H\hat{\mathbf{a}}\|_{\mathscr{B}}$ 
% and $\|\hat{\ell}\|_{\mathscr{B}_*}$, respectively.
% \end{proof}

Another advantage of the case in which the pre-duals $\mathscr{A}_*$ and $\mathscr{B}_*$
exist is that the extreme solution is generally 
attained. This is a result of the following statement.  

\begin{proposition}\label{extvalatt}
If $\mathscr{N}$ is a subspace of $\mathscr{A}_*$, and $\mathbf{a} \in \mathscr{A}$ with $\mathbf{a} \notin \mathscr{N}^{\perp}$, then there exists $\mathbf{a}' \in \mathscr{N}^{\perp}$ such that
$\|\mathbf{a} - \mathbf{a}' \|_{\mathscr{A}} = \dist(\mathbf{a},\mathscr{N}^{\perp}).$
\end{proposition}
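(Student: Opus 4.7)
The plan is to exploit the weak$^{*}$ topology on $\mathscr{A}=(\mathscr{A}_*)^*$ together with the Banach--Alaoglu theorem, which is the natural substitute for reflexivity in this setting. The main point is that $\mathscr{N}^{\perp}$, though defined via a subset of the predual, is a weak$^{*}$ closed subspace of $\mathscr{A}$, and the norm on $\mathscr{A}$ is weak$^{*}$ lower semicontinuous.

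First I would record two preliminary observations. The annihilator $\mathscr{N}^{\perp}\subseteq\mathscr{A}$ is weak$^{*}$ closed, since it is the intersection over $\ell\in\mathscr{N}$ of the sets $\{\mathbf{a}\in\mathscr{A}:\langle\mathbf{a},\ell\rangle_{\mathscr{A}}=0\}$, each of which is the kernel of a weak$^{*}$ continuous functional. In particular $\mathbf{a}\notin\mathscr{N}^{\perp}$ forces $d:=\dist(\mathbf{a},\mathscr{N}^{\perp})>0$. The norm on $\mathscr{A}$ can be written $\|\mathbf{x}\|_{\mathscr{A}}=\sup\{|\langle\mathbf{x},\ell\rangle_{\mathscr{A}}|:\ell\in\mathscr{A}_*,\,\|\ell\|_{\mathscr{A}_*}\le1\}$, a pointwise supremum of weak$^{*}$ continuous functions, and hence is weak$^{*}$ lower semicontinuous; translation by $-\mathbf{a}$ is a weak$^{*}$ homeomorphism, so the function $\mathbf{c}\mapsto\|\mathbf{a}-\mathbf{c}\|_{\mathscr{A}}$ is weak$^{*}$ lower semicontinuous on $\mathscr{A}$.

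Next I would form a natural weak$^{*}$ compact candidate set and locate the minimizer there. Pick any $\mathbf{c}_0\in\mathscr{N}^{\perp}$ and set $r:=\|\mathbf{a}-\mathbf{c}_0\|_{\mathscr{A}}$, and define
\begin{equation*}
C:=\bigl\{\mathbf{c}\in\mathscr{N}^{\perp}:\ \|\mathbf{a}-\mathbf{c}\|_{\mathscr{A}}\le r\bigr\}.
\end{equation*}
Then $C$ is nonempty (it contains $\mathbf{c}_0$), and bounded since $\|\mathbf{c}\|_{\mathscr{A}}\le r+\|\mathbf{a}\|_{\mathscr{A}}$ for every $\mathbf{c}\in C$. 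By Banach--Alaoglu applied to the dual space $\mathscr{A}=(\mathscr{A}_*)^*$, the closed ball of radius $r+\|\mathbf{a}\|_{\mathscr{A}}$ in $\mathscr{A}$ is weak$^{*}$ compact. Since $C$ is the intersection of this weak$^{*}$ compact ball with the weak$^{*}$ closed set $\mathscr{N}^{\perp}$ and with the weak$^{*}$ closed sublevel set $\{\mathbf{c}:\|\mathbf{a}-\mathbf{c}\|_{\mathscr{A}}\le r\}$ (using weak$^{*}$ lower semicontinuity), $C$ is weak$^{*}$ compact.

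Finally, the weak$^{*}$ lower semicontinuous function $\mathbf{c}\mapsto\|\mathbf{a}-\mathbf{c}\|_{\mathscr{A}}$ attains its infimum on the weak$^{*}$ compact set $C$, at some $\mathbf{a}'\in C\subseteq\mathscr{N}^{\perp}$. Because any element of $\mathscr{N}^{\perp}\setminus C$ has distance to $\mathbf{a}$ exceeding $r\ge d$, this infimum over $C$ equals $\dist(\mathbf{a},\mathscr{N}^{\perp})$, yielding $\|\mathbf{a}-\mathbf{a}'\|_{\mathscr{A}}=\dist(\mathbf{a},\mathscr{N}^{\perp})$ as required. The main obstacle, should one wish to carry this out purely by a minimizing-sequence argument in place of the compactness argument above, would be that the weak$^{*}$ topology is not in general metrizable on $\mathscr{A}$; the point of working with the weak$^{*}$ compact sublevel set $C$ and weak$^{*}$ lower semicontinuity is precisely to sidestep this issue without any separability hypothesis on $\mathscr{A}_*$.
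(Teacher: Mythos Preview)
Your proof is correct, and it takes a genuinely different route from the paper's. The paper proceeds by a minimizing sequence: it picks $\mathbf{a}'_n\in\mathscr{N}^{\perp}$ with $\|\mathbf{a}-\mathbf{a}'_n\|_{\mathscr{A}}\to\dist(\mathbf{a},\mathscr{N}^{\perp})$, notes the sequence is bounded, applies the sequential Banach--Alaoglu theorem to extract a weak$^{*}$ convergent subsequence $\mathbf{a}'_{n_k}\to\mathbf{a}'$, checks $\mathbf{a}'\in\mathscr{N}^{\perp}$, and then verifies $\|\mathbf{a}-\mathbf{a}'\|_{\mathscr{A}}\le\dist(\mathbf{a},\mathscr{N}^{\perp})$ by what is in effect a by-hand weak$^{*}$ lower semicontinuity estimate. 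You instead argue topologically: you identify the norm as weak$^{*}$ lower semicontinuous, exhibit a nonempty weak$^{*}$ compact sublevel set $C\subseteq\mathscr{N}^{\perp}$, and invoke attainment of a lower semicontinuous function on a compact set. The payoff of your approach is exactly what you note in your final paragraph: the paper's sequential argument relies on the weak$^{*}$ topology being metrizable on bounded sets, which is why the paper explicitly restricts to the separable case (it states the appendix proof is for $\mathscr{A}$ separable, and the standing hypothesis of the paper is that all spaces are separable). Your argument needs no separability hypothesis at all. Conversely, the paper's sequence-based proof is slightly more concrete and would be easier to adapt if one wanted quantitative information about approximate minimizers.
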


As ever, see Appendix for a proof of the case $\mathscr{A}$ is separable. A quick consequence of Proposition \ref{extvalatt} is that the extreme value in \eqref{best_app} is attained. 

\begin{coro}\label{existence}
    Under the conditions of Theorem \ref{best_app_dual_pre_dual}, there exists $\hat{\mathbf{a}} \in \mathscr{A}$ such that the infimum in \eqref{dual_problem_formula_pre_dual} is attained.
\end{coro}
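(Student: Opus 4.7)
The plan is to apply Proposition \ref{extvalatt} directly, with $\mathscr{B}$ and $\mathscr{B}_*$ playing the roles of $\mathscr{A}$ and $\mathscr{A}_*$ there, and with subspace $\mathscr{N} := \kernel H_* \subseteq \mathscr{B}_*$. From the computation inside the proof of Theorem \ref{best_app_dual_pre_dual}, $(\kernel H_*)^{\perp} = \overline{\mathscr{M}}^{w^*}$, and by the standing hypothesis this equals $\mathscr{M}$. So the closed convex set we want to approximate $\mathbf{b}_0$ by is exactly the annihilator that appears in Proposition \ref{extvalatt}.

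If $\mathbf{b}_0 \in \mathscr{M}$, the infimum in \eqref{dual_problem_formula_pre_dual} is zero and it suffices to find $\hat{\mathbf{a}}$ with $H\hat{\mathbf{a}} = \mathbf{b}_0$. If $\mathbf{b}_0 \notin \mathscr{M}$, Proposition \ref{extvalatt} yields $\hat{\mathbf{b}} \in \mathscr{M}$ with $\|\mathbf{b}_0 - \hat{\mathbf{b}}\|_{\mathscr{B}} = \dist(\mathbf{b}_0, \mathscr{M})$; since $\mathscr{M} = \overline{\{H\mathbf{a}: \mathbf{a}\in\mathscr{A}\}}$ in norm, this distance coincides with $\inf\{\|\mathbf{b}_0 - H\mathbf{a}\|_{\mathscr{B}}: \mathbf{a} \in \mathscr{A}\}$. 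The desired $\hat{\mathbf{a}}$ is then obtained by writing $\hat{\mathbf{b}} = H\hat{\mathbf{a}}$, after which $\|\mathbf{b}_0 - H\hat{\mathbf{a}}\|_{\mathscr{B}}$ matches the infimum by construction.

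The step I expect to be the main obstacle is precisely this last identification: a priori $\hat{\mathbf{b}} \in \mathscr{M}$ is only a norm limit of elements of $\range H$, and one needs to promote this to $\hat{\mathbf{b}} \in \range H$. The route I would take is to pass to a minimizing sequence in the quotient $\mathscr{A}/\kernel H$, which by Lemma \ref{dualssubs} (applied at the pre-dual level, using $\kernel H = (\range H_*)^{\perp}$) is isometrically isomorphic to the dual of $\overline{\range H_*}$ and therefore has a weak$^*$ compact closed unit ball by Banach--Alaoglu. A weak$^*$ cluster point $[\hat{\mathbf{a}}]$ of a norm-bounded minimizing sequence, combined with the weak$^*$-to-weak$^*$ continuity of $H = (H_*)^*$ and the weak$^*$ lower semicontinuity of $\|\mathbf{b}_0 - \cdot\|_{\mathscr{B}}$, would give $\|\mathbf{b}_0 - H\hat{\mathbf{a}}\|_{\mathscr{B}} \leq \dist(\mathbf{b}_0, \mathscr{M})$, forcing equality. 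The truly delicate point here is ensuring boundedness of the quotient minimizing sequence, which I would deduce from the weak$^*$-closedness of $\mathscr{M}$ via the Closed Range Theorem applied to $H_*$, yielding norm-closedness of $\range H$ and hence the required lower bound on the induced injection $\mathscr{A}/\kernel H \to \mathscr{B}$.
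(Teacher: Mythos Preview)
You have put your finger on a genuine gap, and in fact one that the paper does not address either: the text simply declares the corollary a ``quick consequence'' of Proposition~\ref{extvalatt} and moves on. As you observe, Proposition~\ref{extvalatt} only produces a minimizer $\hat{\mathbf{b}}$ in $(\kernel H_*)^{\perp}=\mathscr{M}=\overline{\range H}$, and one must still show $\hat{\mathbf{b}}\in\range H$.

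Your proposed repair, however, does not go through. The standing hypothesis $\overline{\mathscr{M}}^{w^*}=\mathscr{M}$ says only that the \emph{norm closure} $\mathscr{M}$ of $\range H$ is weak$^*$ closed; it says nothing about $\range H$ itself. The Closed Range Theorem would deliver closedness of $\range H$ from weak$^*$ closedness of $\range H$, not of its closure $\mathscr{M}$. A concrete obstruction: take $\mathscr{A}_*=\mathscr{B}_*=c_0(\mathbb{N})$ and $H_*\colon(x_n)_n\mapsto(x_n/n)_n$, so that $H\colon\ell_1(\mathbb{N})\to\ell_1(\mathbb{N})$ is $(a_n)_n\mapsto(a_n/n)_n$. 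Then $\range H$ is dense but not closed, $\mathscr{M}=\ell_1(\mathbb{N})$ is trivially weak$^*$ closed, and for $\mathbf{b}_0=(1/n^2)_n$ the infimum in \eqref{dual_problem_formula_pre_dual} equals $0$ but is not attained by any $\hat{\mathbf{a}}\in\ell_1(\mathbb{N})$. The same example defeats the quotient-boundedness step you outline: here $\kernel H=\{0\}$, the induced injection $\mathscr{A}/\kernel H\to\mathscr{B}$ is $H$ itself, and the minimizing sequence $\mathbf{a}_N=\sum_{n\leq N}n^{-1}e_n$ is unbounded.

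So the corollary, as stated, appears to require an additional hypothesis such as $\range H$ closed (equivalently $\range H_*$ closed). This holds in the concrete setting of Section~4, where $B$ is the identity on $\ell_1(\mathbb{N})$ and $H$ is therefore bounded below, so the downstream results there are unaffected. But neither your argument nor the paper's one-line justification establishes the corollary at the stated level of generality.
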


\section{The Regularized Extremal Problem }\label{dirsumsec}

The objective of this section is to develop a duality approach for solving the regularized extremal problem  \eqref{regprob}. The expression
$\big(  \|\mathbf{y}_0 - A\mathbf{x}\|_{\mathscr{Y}}^p + \rho\|B\mathbf{x}\|_{\mathscr{Z}}^p \big)^{1/p}$,
which appears in the regularized extremal problem \eqref{regprob}, is a monotone function of the loss function $\|\mathbf{y}_0 - A\mathbf{x}\|_{\mathscr{Y}}^p + \rho\|B\mathbf{x}\|_{\mathscr{Z}}^p$ and itself defines a norm on a direct sum space constructed from $\mathscr{Y}$ and $\mathscr{Z}$. Motivated by this observation, we reformulate the regularized extremal problem \eqref{regprob} as in \eqref{best_app} and apply a duality argument to transform  \eqref{regprob} into an equivalent dual problem. In practice, the dual problem, containing only finitely many free parameters, can be solvable by numerical methods. We then leverage the dual solution into a solution of the original problem.
Throughout the rest of this paper, we
always assume that the regularized extremal problem  \eqref{regprob} has a solution without further
mention. In particular, it is guaranteed by Corollary \ref{existence} that this assumption holds when
$\mathscr{X},\mathscr{Y}$ and $\mathscr{Z}$ have pre-dual spaces. 

We start with recalling the notion of the direct sum of two Banach spaces. The direct sum $\mathscr{A}\oplus\mathscr{B}$ of two Banach spaces $\mathscr{A}$ and $\mathscr{B}$ is the Cartesian product $\mathscr{A}\times\mathscr{B}$, made into a vector space via componentwise addition
$(\mathbf{a}_1,\mathbf{b}_1) + (\mathbf{a}_2,\mathbf{b}_2) := (\mathbf{a}_1 + \mathbf{a}_2, \mathbf{b}_1+\mathbf{b}_2)$ and scalar multiplication
$c(\mathbf{a}_1,\mathbf{b}_1) = (c\mathbf{a}_1,c\mathbf{b}_1).$
There are numerous ways to place a norm on a direct sum. We lay out a family of norms that could be placed on the direct sum, depending on the parameter $p\in[1,+\infty]$.  The associated dual spaces are also identified. For $p\in [1,+\infty)$, we denote by $\mathscr{A} \oplus_p\mathscr{B}$ the direct sum of $\mathscr{A}$ and $\mathscr{B}$ endowed with the norm
$
\|(\mathbf{a},\mathbf{b})\|_{\mathscr{A} \oplus_p \mathscr{B}} :=  \big(\|\mathbf{a}\|^p_{\mathscr{A}} +  \|\mathbf{b}\|^p_{\mathscr{B}} \big)^{1/p}.
$
We also denote by $\mathscr{A} \oplus_{\infty}\mathscr{B}$ the direct sum of $\mathscr{A}$ and $\mathscr{B}$ endowed with the norm
$
\|(\mathbf{a},\mathbf{b})\|_{\mathscr{A} \oplus_{\infty}\mathscr{B}} :=\max\big\{\|\mathbf{a}\|_{\mathscr{A}},
\|\mathbf{b}\|_{\mathscr{B}}\big\}.
$
It is easy to see that the spaces $\mathscr{A}\oplus_{p} \mathscr{B}$, $p\in[1,+\infty]$, are all Banach spaces. 
Let $p'$ be the H\"{o}lder conjugate of $p$ satisfying  $1/p+1/p'=1$. Then the dual space of $\mathscr{A}\oplus_{p} \mathscr{B}$ is isometrically isomorphic to $\mathscr{A}^*\oplus_{p'} \mathscr{B}^*$. 

We next identify the norming functionals for a nonzero element of the direct sum space $\mathscr{A} \oplus_p \mathscr{B}$,  $p\in(1,+\infty]$. Some function-theoretic preliminaries follow below, with the proofs being supplied in the appendix.
% We begin with pointing out that in general, a nonzero vector may have more than one norming functional.
% \begin{proposition}\label{normfcnlconv}
%   Let $\mathbf{a}$ be a nonzero vector in a Banach space $\mathscr{A}$.  The collection of norming functionals for $\mathbf{a}$ is convex.
% \end{proposition}
We first consider the case that $p,p'\in(1,+\infty)$. 
\begin{proposition}\label{normfcnldirsum}
   Let $\mathscr{A},\mathscr{B}$ be real Banach spaces and $p,p'\in(1,+\infty)$ satisfy $1/p+1/{p'} = 1$. Suppose that $(\lambda,\mu)\in \mathscr{A}^* \oplus_{p'}\mathscr{B}^*\setminus\{(0,0)\}$ is norming for $(\mathbf{a},\mathbf{b})\in \mathscr{A} \oplus_{p} \mathscr{B}\setminus\{(0,0)\}$. Then the following statements hold.
    
    \begin{enumerate}
        \item If $\mathbf{b}=0$,
        then $\mu=0$ and $\lambda$ is norming for $\mathbf{a}$;
        \item If $\mathbf{a}=0$, then $\lambda=0$ and $\mu$ is norming for $\mathbf{b}$;
        \item If $\mathbf{a}$ and $\mathbf{b}$ are both nonzero vectors, then $\lambda\neq 0$, $\mu\neq 0$, and   $\big(1+\|\mathbf{b}\|^p_{\mathscr{B}}/\|\mathbf{a}\|_{\mathscr{A}}^p\big)^{1/{p'}}\lambda$ is norming for $\mathbf{a}$
        % \[
        %   \frac{\big(\|\mathbf{a}\|^p_{\mathscr{A}} +  \|\mathbf{b}\|^p_{\mathscr{B}} \big)^{1/{p'}}}{\|\mathbf{a}\|_{\mathscr{A}}^{p-1}}\lambda
        % \]
        and $\big(1+\|\mathbf{a}\|^p_{\mathscr{B}}/\|\mathbf{b}\|_{\mathscr{A}}^p\big)^{1/{p'}}\mu$ is norming for $\mathbf{b}$.
        % \[
        %   \frac{\big(\|\mathbf{a}\|^p_{\mathscr{A}} +  \|\mathbf{b}\|^\mu/p_{\mathscr{B}} \big)^{1/{p'}}}{\|\mathbf{b}\|_{\mathscr{B}}^{p-1}}\mu.
        % \]
   \end{enumerate}

\end{proposition}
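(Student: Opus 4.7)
The plan is to derive everything from the equality cases of two chained inequalities: the linear bounds $\langle \mathbf{a},\lambda\rangle_{\mathscr{A}} \leq \|\mathbf{a}\|_{\mathscr{A}}\|\lambda\|_{\mathscr{A}^*}$ and $\langle \mathbf{b},\mu\rangle_{\mathscr{B}} \leq \|\mathbf{b}\|_{\mathscr{B}}\|\mu\|_{\mathscr{B}^*}$, and the two-term numerical H\"{o}lder inequality
\begin{equation*}
\|\mathbf{a}\|_{\mathscr{A}}\|\lambda\|_{\mathscr{A}^*} + \|\mathbf{b}\|_{\mathscr{B}}\|\mu\|_{\mathscr{B}^*} \leq \bigl(\|\mathbf{a}\|_{\mathscr{A}}^p + \|\mathbf{b}\|_{\mathscr{B}}^p\bigr)^{1/p}\bigl(\|\lambda\|_{\mathscr{A}^*}^{p'} + \|\mu\|_{\mathscr{B}^*}^{p'}\bigr)^{1/p'}.
\end{equation*}
The norming identity $\langle \mathbf{a},\lambda\rangle_{\mathscr{A}} + \langle \mathbf{b},\mu\rangle_{\mathscr{B}} = (\|\mathbf{a}\|^p + \|\mathbf{b}\|^p)^{1/p}$ together with $\|\lambda\|^{p'} + \|\mu\|^{p'} = 1$ forces both chains to collapse into equalities, and the rest of the proposition will be read off from the resulting equality conditions.

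For parts (1) and (2), I would avoid invoking the H\"{o}lder equality case directly. If, say, $\mathbf{b}=0$, then the norming identity reduces to $\langle \mathbf{a},\lambda\rangle_{\mathscr{A}} = \|\mathbf{a}\|_{\mathscr{A}}$; combined with $\langle \mathbf{a},\lambda\rangle_{\mathscr{A}} \leq \|\mathbf{a}\|_{\mathscr{A}}\|\lambda\|_{\mathscr{A}^*}$ this forces $\|\lambda\|_{\mathscr{A}^*}\geq 1$, while the constraint $\|\lambda\|^{p'} + \|\mu\|^{p'}=1$ forces $\|\lambda\|_{\mathscr{A}^*}\leq 1$. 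Hence $\|\lambda\|_{\mathscr{A}^*}=1$, $\mu=0$, and $\langle \mathbf{a},\lambda\rangle_{\mathscr{A}} = \|\mathbf{a}\|_{\mathscr{A}}\|\lambda\|_{\mathscr{A}^*}$, so $\lambda$ is norming for $\mathbf{a}$. The case $\mathbf{a}=0$ is handled by symmetry.

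For part (3), I would first rule out the degenerate cases $\lambda=0$ and $\mu=0$: if $\lambda=0$, then the norming identity reduces to $\langle \mathbf{b},\mu\rangle_{\mathscr{B}} = (\|\mathbf{a}\|^p + \|\mathbf{b}\|^p)^{1/p}$, while $\langle \mathbf{b},\mu\rangle_{\mathscr{B}} \leq \|\mathbf{b}\|_{\mathscr{B}}\|\mu\|_{\mathscr{B}^*}\leq \|\mathbf{b}\|_{\mathscr{B}}$, contradicting $\mathbf{a}\neq 0$. With all four quantities nonzero, the equality case of two-term H\"{o}lder yields the proportionality $\|\lambda\|_{\mathscr{A}^*}^{p'}/\|\mathbf{a}\|_{\mathscr{A}}^p = \|\mu\|_{\mathscr{B}^*}^{p'}/\|\mathbf{b}\|_{\mathscr{B}}^p$, which combined with $\|\lambda\|^{p'} + \|\mu\|^{p'} = 1$ pins down
\begin{equation*}
\|\lambda\|_{\mathscr{A}^*} = \frac{\|\mathbf{a}\|_{\mathscr{A}}^{p-1}}{\bigl(\|\mathbf{a}\|_{\mathscr{A}}^p + \|\mathbf{b}\|_{\mathscr{B}}^p\bigr)^{1/p'}},
\end{equation*}
so that $\bigl(1+\|\mathbf{b}\|_{\mathscr{B}}^p/\|\mathbf{a}\|_{\mathscr{A}}^p\bigr)^{1/p'}\|\lambda\|_{\mathscr{A}^*} = 1$. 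The already-established equality $\langle \mathbf{a},\lambda\rangle_{\mathscr{A}} = \|\mathbf{a}\|_{\mathscr{A}}\|\lambda\|_{\mathscr{A}^*}$ then yields $\langle \mathbf{a},\bigl(1+\|\mathbf{b}\|^p/\|\mathbf{a}\|^p\bigr)^{1/p'}\lambda\rangle_{\mathscr{A}} = \|\mathbf{a}\|_{\mathscr{A}}$, as required; the corresponding statement for $\mu$ follows by swapping the roles of $(\mathbf{a},\lambda)$ and $(\mathbf{b},\mu)$. The main obstacle is the clean invocation of the equality case of two-term H\"{o}lder, namely that equality forces $(\|\lambda\|^{p'},\|\mu\|^{p'})$ and $(\|\mathbf{a}\|^p,\|\mathbf{b}\|^p)$ to be positively proportional; once this elementary fact is recorded, the remaining work is purely algebraic bookkeeping.
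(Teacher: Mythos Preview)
Your proposal is correct and follows essentially the same route as the paper: both arguments rest on the chain
\[
\langle \mathbf{a},\lambda\rangle_{\mathscr{A}} + \langle \mathbf{b},\mu\rangle_{\mathscr{B}}
\;\le\; \|\mathbf{a}\|_{\mathscr{A}}\|\lambda\|_{\mathscr{A}^*}+\|\mathbf{b}\|_{\mathscr{B}}\|\mu\|_{\mathscr{B}^*}
\;\le\; \bigl(\|\mathbf{a}\|_{\mathscr{A}}^p+\|\mathbf{b}\|_{\mathscr{B}}^p\bigr)^{1/p},
\]
observe that the norming hypothesis forces equality throughout, and then read off the termwise equalities $\langle \mathbf{a},\lambda\rangle_{\mathscr{A}}=\|\mathbf{a}\|_{\mathscr{A}}\|\lambda\|_{\mathscr{A}^*}$, $\langle \mathbf{b},\mu\rangle_{\mathscr{B}}=\|\mathbf{b}\|_{\mathscr{B}}\|\mu\|_{\mathscr{B}^*}$ together with the H\"older proportionality $\|\mathbf{a}\|_{\mathscr{A}}^p:\|\mathbf{b}\|_{\mathscr{B}}^p = \|\lambda\|_{\mathscr{A}^*}^{p'}:\|\mu\|_{\mathscr{B}^*}^{p'}$. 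The only noteworthy difference is in parts (1)--(2): the paper deduces $\mu=0$ from the proportionality relation $\|\mathbf{b}\|_{\mathscr{B}}^p=C\|\mu\|_{\mathscr{B}^*}^{p'}$ with $C>0$, whereas you bypass the H\"older equality case entirely via the sandwich $1\le\|\lambda\|_{\mathscr{A}^*}\le 1$. Your variant is marginally cleaner, since the H\"older equality condition is a little delicate to state when one of the two entries vanishes, but the two arguments are otherwise interchangeable.
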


When $p=\infty$, ${p'}=1$, the norming functionals for a nonzero element of the direct sum space are characterized as follows.

\begin{proposition}\label{normfcnldirsum_p=infty}
    Let $\mathscr{A}$ and $\mathscr{B}$ be real Banach spaces. Suppose that $(\lambda,\mu)\in \mathscr{A}^* \oplus_{1} \mathscr{B}^*\setminus\{(0,0)\}$ is norming for $(\mathbf{a},\mathbf{b})\in \mathscr{A} \oplus_{\infty} \mathscr{B}\setminus\{(0,0)\}$. Then the following statements hold.
    
    \begin{enumerate}
        \item If $\|\mathbf{a}\|_{\mathscr{A}} > \|\mathbf{b}\|_{\mathscr{B}}$,
        then $\mu=0$ and $\lambda$ is norming for $\mathbf{a}$;
        \item If $\|\mathbf{a}\|_{\mathscr{A}} < \|\mathbf{b}\|_{\mathscr{B}}$, then $\lambda=0$ and $\mu$ is norming for $\mathbf{b}$;
        \item If $\|\mathbf{a}\|_{\mathscr{A}} = \|\mathbf{b}\|_{\mathscr{B}}$, then $\lambda,\mu$ satisfy one of the following three conditions: (i) $\mu=0$ and $\lambda$ is norming for $\mathbf{a}$; (ii) $\lambda=0$ and $\mu$ is norming for $\mathbf{b}$; (iii) $\lambda\neq 0$, $\mu\neq 0$, $\lambda/\|\lambda\|_{\mathscr{A}^*}$ is norming for $\mathbf{a}$, and $\mu/\|\mu\|_{\mathscr{B}^*}$ is norming for $\mathbf{b}$.
   \end{enumerate}
\end{proposition}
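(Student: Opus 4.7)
The plan is a direct equality-case analysis of the two scalar identities encoded in the norming hypothesis. Unpacking the definitions, the assumption that $(\lambda,\mu)$ is norming for $(\mathbf{a},\mathbf{b})$ in $\mathscr{A}\oplus_\infty\mathscr{B}$ says exactly that
\begin{equation*}
\|\lambda\|_{\mathscr{A}^*} + \|\mu\|_{\mathscr{B}^*} = 1 \quad \text{and} \quad \langle\mathbf{a},\lambda\rangle_{\mathscr{A}} + \langle\mathbf{b},\mu\rangle_{\mathscr{B}} = M,
\end{equation*}
where $M := \max\{\|\mathbf{a}\|_{\mathscr{A}},\|\mathbf{b}\|_{\mathscr{B}}\}$. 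The starting observation is the two-step inequality chain
\begin{equation*}
\langle\mathbf{a},\lambda\rangle_{\mathscr{A}} + \langle\mathbf{b},\mu\rangle_{\mathscr{B}} \leq \|\mathbf{a}\|_{\mathscr{A}}\|\lambda\|_{\mathscr{A}^*} + \|\mathbf{b}\|_{\mathscr{B}}\|\mu\|_{\mathscr{B}^*} \leq M\bigl(\|\lambda\|_{\mathscr{A}^*} + \|\mu\|_{\mathscr{B}^*}\bigr) = M,
\end{equation*}
so the hypothesis forces equality throughout. This yields, whenever the relevant components are nonzero, the alignment equalities $\langle\mathbf{a},\lambda\rangle_{\mathscr{A}} = \|\mathbf{a}\|_{\mathscr{A}}\|\lambda\|_{\mathscr{A}^*}$ and $\langle\mathbf{b},\mu\rangle_{\mathscr{B}} = \|\mathbf{b}\|_{\mathscr{B}}\|\mu\|_{\mathscr{B}^*}$, together with the balance conditions $\|\lambda\|_{\mathscr{A}^*}\bigl(M - \|\mathbf{a}\|_{\mathscr{A}}\bigr) = 0$ and $\|\mu\|_{\mathscr{B}^*}\bigl(M - \|\mathbf{b}\|_{\mathscr{B}}\bigr) = 0$.

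Next I would read off each conclusion by processing the balance conditions in the three comparison cases. If $\|\mathbf{a}\|_{\mathscr{A}} > \|\mathbf{b}\|_{\mathscr{B}}$, then $M - \|\mathbf{b}\|_{\mathscr{B}} > 0$ forces $\mu = 0$; the unit-sum condition then gives $\|\lambda\|_{\mathscr{A}^*} = 1$, and the surviving alignment equality reduces to $\langle\mathbf{a},\lambda\rangle_{\mathscr{A}} = \|\mathbf{a}\|_{\mathscr{A}}$, which is precisely the statement that $\lambda$ is norming for $\mathbf{a}$. The case $\|\mathbf{a}\|_{\mathscr{A}} < \|\mathbf{b}\|_{\mathscr{B}}$ is symmetric.

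In the tie case $\|\mathbf{a}\|_{\mathscr{A}} = \|\mathbf{b}\|_{\mathscr{B}} = M$, both balance conditions are automatically satisfied, so the only remaining constraints are the unit-sum condition and the two alignment equalities. Splitting on which of $\lambda,\mu$ is zero recovers subcases (i) and (ii) exactly as above; if both are nonzero, dividing each alignment equality by the corresponding dual norm shows that $\lambda/\|\lambda\|_{\mathscr{A}^*}$ is norming for $\mathbf{a}$ and $\mu/\|\mu\|_{\mathscr{B}^*}$ is norming for $\mathbf{b}$. The only real subtlety is this renormalization in subcase (iii): because the pair $(\lambda,\mu)$ together carries the total unit mass on $\mathscr{A}^*\oplus_1\mathscr{B}^*$, neither component individually has unit norm, so one must divide out before reading off the norming property. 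Beyond that, the proof is purely an equality-case bookkeeping of the two elementary inequalities above, and I do not anticipate any serious obstacle.
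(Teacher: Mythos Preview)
Your proposal is correct and follows essentially the same approach as the paper's proof: both unpack the norming hypothesis into the unit-sum condition and the pairing identity, run the two-step inequality chain, and then read off the alignment equalities $\langle\mathbf{a},\lambda\rangle_{\mathscr{A}}=\|\mathbf{a}\|_{\mathscr{A}}\|\lambda\|_{\mathscr{A}^*}$, $\langle\mathbf{b},\mu\rangle_{\mathscr{B}}=\|\mathbf{b}\|_{\mathscr{B}}\|\mu\|_{\mathscr{B}^*}$ together with what you call the balance conditions (the paper writes them as $\|\mathbf{a}\|_{\mathscr{A}}\|\lambda\|_{\mathscr{A}^*}=\|\lambda\|_{\mathscr{A}^*}M$ and $\|\mathbf{b}\|_{\mathscr{B}}\|\mu\|_{\mathscr{B}^*}=\|\mu\|_{\mathscr{B}^*}M$, which are the same thing). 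The case analysis and the renormalization in subcase~(iii) are handled identically.
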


We now turn to rewriting  \eqref{regprob} as in  \eqref{best_app}. We  choose $\mathscr{A}:=\mathscr{X}$ and $\mathscr{B}:=\mathscr{Y}\oplus_{p}\mathscr{Z}$ for $p\in [1,+\infty)$. Associated with the bounded linear operators $A$, $B$ and the regularization parameter $\rho$, we define an operator $H:\mathscr{X}\rightarrow\mathscr{Y}\oplus_{p}\mathscr{Z}$ by \begin{equation}\label{operator_H}
    Hx:=(Ax,\rho^{1/p}Bx),\ \mbox{for all}\ x\in\mathscr{X}.
\end{equation} 
It is obvious that $H$ is a bounded linear operator on $\mathscr{X}$. The following lemma gives the adjoint operator of the operator $H$. 

\begin{lemma}\label{adjoint_H}
Suppose that $\mathscr{X},\mathscr{Y}$ and $\mathscr{Z}$ are real Banach spaces with the respective dual spaces $\mathscr{X}^*,\mathscr{Y}^*$ and $\mathscr{Z}^*$,  $A:\mathscr{X}\rightarrow\mathscr{Y}$, $B:\mathscr{X}\rightarrow\mathscr{Z}$ are bounded linear operators with the respective adjoint operators $A^*,B^*$. Let $\rho>0$ and $p\in[1,+\infty)$, $p'\in(1,+\infty]$ satisfy  $\textstyle{\frac{1}{p}}+\textstyle{\frac{1}{p'}} = 1$. If the operator $H$ is defined by \eqref{operator_H}, then the adjoint operator $H^*$ of $H$ has the form
\begin{equation*}
H^*(\lambda,\mu):=A^*\lambda+\rho^{1/p}B^*\mu,\ \ \mbox{for all}\ \ (\lambda,\mu)\in\mathscr{Y}^*\oplus_{p'}\mathscr{Z}^*.
\end{equation*}
\end{lemma}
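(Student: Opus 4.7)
The plan is to verify the stated formula by directly computing the action of the adjoint through the defining identity $\langle Hx,(\lambda,\mu)\rangle_{\mathscr{Y}\oplus_p\mathscr{Z}} = \langle x, H^*(\lambda,\mu)\rangle_{\mathscr{X}}$, which must hold for every $x\in\mathscr{X}$ and every $(\lambda,\mu)\in\mathscr{Y}^*\oplus_{p'}\mathscr{Z}^*$. Because the dual of $\mathscr{Y}\oplus_p\mathscr{Z}$ has already been identified isometrically with $\mathscr{Y}^*\oplus_{p'}\mathscr{Z}^*$, the pairing between a pair $(y,z)\in\mathscr{Y}\oplus_p\mathscr{Z}$ and a pair $(\lambda,\mu)\in\mathscr{Y}^*\oplus_{p'}\mathscr{Z}^*$ is simply $\langle y,\lambda\rangle_{\mathscr{Y}}+\langle z,\mu\rangle_{\mathscr{Z}}$. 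I would first record this pairing explicitly (citing the identification of duals stated just above the lemma), since it is the one point that legitimately needs acknowledgment.

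With the pairing in hand, I would substitute $Hx=(Ax,\rho^{1/p}Bx)$ into the left side, obtaining
\begin{equation*}
\langle Hx,(\lambda,\mu)\rangle_{\mathscr{Y}\oplus_p\mathscr{Z}}
=\langle Ax,\lambda\rangle_{\mathscr{Y}}+\rho^{1/p}\langle Bx,\mu\rangle_{\mathscr{Z}},
\end{equation*}
and then apply the defining properties of the adjoints $A^*$ and $B^*$ to rewrite this as $\langle x,A^*\lambda\rangle_{\mathscr{X}}+\rho^{1/p}\langle x,B^*\mu\rangle_{\mathscr{X}}$. Linearity of the dual pairing in the second slot collapses this into $\langle x,\,A^*\lambda+\rho^{1/p}B^*\mu\rangle_{\mathscr{X}}$. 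Since $x\in\mathscr{X}$ is arbitrary and the adjoint is uniquely determined by its action against every $x$, the desired identity $H^*(\lambda,\mu)=A^*\lambda+\rho^{1/p}B^*\mu$ follows at once. There is no substantive obstacle here; the only care needed is to be explicit about which pairing is being used on the direct sum, so that the factor of $\rho^{1/p}$ is tracked correctly from $Hx$ into the final formula.
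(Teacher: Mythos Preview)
Your argument is correct and follows essentially the same route as the paper: both proofs write out the defining adjoint identity $\langle Hx,(\lambda,\mu)\rangle = \langle x,H^*(\lambda,\mu)\rangle$, substitute $Hx=(Ax,\rho^{1/p}Bx)$, pass to $A^*$ and $B^*$ componentwise, and conclude by linearity and the arbitrariness of $x$. Your only addition is the explicit remark about the form of the dual pairing on $\mathscr{Y}\oplus_p\mathscr{Z}$, which is harmless and arguably clarifying.
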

\begin{proof}
We first note that
\begin{equation}\label{Expression-of-H*}
    \langle\mathbf{x}, H^*(\lambda,\mu)\rangle_{\mathscr{X}}=\langle H\mathbf{x}, (\lambda,\mu)\rangle_{\mathscr{Y}\oplus_{p}\mathscr{Z}},\  \mbox{for all}\  \mathbf{x}\in \mathscr{X}, (\lambda,\mu)\in\mathscr{Y}^*\oplus_{p'}\mathscr{Z}^*.
\end{equation}
By the definition \eqref{operator_H} of $H$, we have that
$\langle H\mathbf{x}, (\lambda,\mu)\rangle_{\mathscr{Y}\oplus_{p}\mathscr{Z}} 
=\langle A\mathbf{x},\lambda\rangle_{\mathscr{Y}}+ \rho^{1/p}\langle B\mathbf{x},\mu\rangle_{\mathscr{Z}},$
which further leads to 
$$
\langle H\mathbf{x}, (\lambda,\mu)\rangle_{\mathscr{Y}\oplus_{p}\mathscr{Z}} =\langle \mathbf{x}, A^*\lambda\rangle_{\mathscr{X}}+\rho^{1/p}\langle \mathbf{x}, B^*\mu\rangle_{\mathscr{X}}
=\langle \mathbf{x}, A^*\lambda+\rho^{1/p}B^*\mu\rangle_{\mathscr{X}}.
$$
Substituting the above equation into the right hand side of \eqref{Expression-of-H*},
we obtain for all $\mathbf{x}\in \mathscr{X}$ and all  $(\lambda,\mu)\in\mathscr{Y}^*\oplus_{p'}\mathscr{Z}^*$ that 
$\langle\mathbf{x}, H^*(\lambda,\mu)\rangle_{\mathscr{X}}=\langle \mathbf{x}, A^*\lambda+\rho^{1/p}B^*\mu\rangle_{\mathscr{X}},$
which leads to the desired representation of $H^*$.
\end{proof}

By using the direct sum $\mathscr{Y}\oplus_p \mathscr{Z}$, $p\in[1,+\infty)$, and the operator $H$ defined by \eqref{operator_H}, we rewrite the objective function of the regularized extremal problem \eqref{regprob} as 
$$
 (\| {\mathbf{y}_0} - A{\mathbf{x}} \|^p_{\mathscr{Y}} + \rho\|B\mathbf{x}\|^p_{\mathscr{Z}})^{1/p} =\| ({\mathbf{y}_0} - A{\mathbf{x}}, -\rho^{1/p}B\mathbf{x})\|_{\mathscr{Y}\oplus_p\mathscr{Z}}=\|(\mathbf{y}_0,0)-H\mathbf{x}\|_{\mathscr{Y}\oplus_p\mathscr{Z}}.
$$
Thus, we rewrite the regularized extremal problem \eqref{regprob} as 
\begin{equation}\label{rewritten_regularization}
\inf\left\{\|(\mathbf{y}_0,0)-H\mathbf{x}\|_{\mathscr{Y}\oplus_p\mathscr{Z}}: \mathbf{x} \in \mathscr{X} \right\}.
\end{equation}
Applying Theorem \ref{best_app_dual} to problem \eqref{rewritten_regularization}, we get the dual problem of \eqref{regprob}.
\begin{theorem}\label{maindualthm}
Suppose that $\mathscr{X},\mathscr{Y}$ and $\mathscr{Z}$ are real
Banach spaces with the respective dual spaces $\mathscr{X}^*,\mathscr{Y}^*$ and $\mathscr{Z}^*$,  $A:\mathscr{X}\rightarrow\mathscr{Y}$, $B:\mathscr{X}\rightarrow\mathscr{Z}$ are bounded linear operators with the adjoint operators $A^*,B^*$, respectively. Let 
$\mathbf{y}_0\in\mathscr{Y}$ and $\rho>0$. Then there holds
\begin{align}
   &\inf\big\{ \|\mathbf{y}_0-A\mathbf{x}\|_{\mathscr{Y}}+\rho \|B\mathbf{x}\|_{\mathscr{Z}} :\ \mathbf{x} \in \mathscr{X} \big\} \nonumber\\ 
   =&
   \sup\Bigg\{|\langle \mathbf{y}_0,\lambda\rangle_{\mathscr{Y}}|:\max\{\|\lambda\|_{\mathscr{Y}^*}, \| \mu\|_{\mathscr{Z}^*}\}\leq1,\ A^*\lambda+\rho B^*\mu=0\Bigg\}.\label{extremalprobs_p=1}
\end{align}
If $p,p'\in(1,+\infty)$ such that $1/p+1/{p'} = 1$, then there holds 
\begin{align}
    &\inf \big\{ \big(\|\mathbf{y}_0-A\mathbf{x}\|^p_{\mathscr{Y}}+\rho \|B\mathbf{x}\|^p_{\mathscr{Z}}\big)^{1/p} :\ \mathbf{x} \in \mathscr{X} \big\}\nonumber\\ 
    =& \sup\Bigg\{|\langle \mathbf{y}_0,\lambda\rangle_{\mathscr{Y}}|:\ \big(\|\lambda\|^{p'}_{\mathscr{Y}^*}+\| \mu\|_{\mathscr{Z}^*}^{p'}\big)^{1/{p'}}\leq1,\  A^*\lambda+\rho^{1/p}B^*\mu=0 \Bigg\}.\label{extremalprobs_p>1}
\end{align}
\end{theorem}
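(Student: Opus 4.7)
The plan is to apply Theorem \ref{best_app_dual} directly to the reformulation \eqref{rewritten_regularization} of problem \eqref{regprob}, which has already been carried out just before the theorem statement. With the choices $\mathscr{A} := \mathscr{X}$, $\mathscr{B} := \mathscr{Y}\oplus_p\mathscr{Z}$, $\mathbf{b}_0 := (\mathbf{y}_0, 0)$, and $H$ the operator defined by \eqref{operator_H}, Theorem \ref{best_app_dual} yields
\begin{equation*}
\inf\{\|(\mathbf{y}_0,0) - H\mathbf{x}\|_{\mathscr{Y}\oplus_p\mathscr{Z}}:\mathbf{x}\in\mathscr{X}\}
=\sup\{|\langle (\mathbf{y}_0,0),(\lambda,\mu)\rangle_{\mathscr{Y}\oplus_p\mathscr{Z}}|:\|(\lambda,\mu)\|_{(\mathscr{Y}\oplus_p\mathscr{Z})^*}\le 1,\ (\lambda,\mu)\in\kernel H^*\}.
\end{equation*}

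Next I would translate each piece of the right-hand side into the explicit form appearing in \eqref{extremalprobs_p=1} and \eqref{extremalprobs_p>1}. Three identifications are needed. First, the duality pairing: since the zero vector annihilates any functional,
$\langle (\mathbf{y}_0,0),(\lambda,\mu)\rangle_{\mathscr{Y}\oplus_p\mathscr{Z}} = \langle \mathbf{y}_0,\lambda\rangle_{\mathscr{Y}}$. Second, the dual-norm constraint: as noted just before Proposition \ref{normfcnldirsum}, $(\mathscr{Y}\oplus_p\mathscr{Z})^*$ is isometrically isomorphic to $\mathscr{Y}^*\oplus_{p'}\mathscr{Z}^*$, so $\|(\lambda,\mu)\|_{(\mathscr{Y}\oplus_p\mathscr{Z})^*}\le 1$ becomes $\max\{\|\lambda\|_{\mathscr{Y}^*},\|\mu\|_{\mathscr{Z}^*}\}\le 1$ when $p=1$ (so $p'=\infty$) and $(\|\lambda\|^{p'}_{\mathscr{Y}^*}+\|\mu\|^{p'}_{\mathscr{Z}^*})^{1/p'}\le 1$ when $p\in(1,\infty)$. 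Third, the kernel condition: by Lemma \ref{adjoint_H}, $(\lambda,\mu)\in\kernel H^*$ is equivalent to $A^*\lambda+\rho^{1/p}B^*\mu=0$, which reads $A^*\lambda+\rho B^*\mu=0$ in the $p=1$ case.

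Assembling these three identifications with the reformulation of the objective, namely
\begin{equation*}
(\|\mathbf{y}_0-A\mathbf{x}\|_{\mathscr{Y}}^p+\rho\|B\mathbf{x}\|_{\mathscr{Z}}^p)^{1/p}=\|(\mathbf{y}_0,0)-H\mathbf{x}\|_{\mathscr{Y}\oplus_p\mathscr{Z}},
\end{equation*}
yields \eqref{extremalprobs_p=1} in the case $p=1$ and \eqref{extremalprobs_p>1} in the case $p\in(1,\infty)$.

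Since every ingredient has been established, there is no real obstacle; the proof amounts to an unpacking of notation. The only point requiring a small care is the $p=1$ case, where $p'=\infty$ and the direct-sum dual norm must be read as the max-norm, and the exponent on $\rho$ collapses from $\rho^{1/p}$ to $\rho$. Once this case is handled explicitly, the $p\in(1,\infty)$ case follows by the same template with the $\ell_{p'}$-norm replacing the max-norm.
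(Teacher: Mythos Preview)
Your proposal is correct and follows essentially the same route as the paper: apply Theorem~\ref{best_app_dual} to the direct-sum reformulation \eqref{rewritten_regularization}, then unpack the pairing $\langle(\mathbf{y}_0,0),(\lambda,\mu)\rangle=\langle\mathbf{y}_0,\lambda\rangle$, the dual norm on $\mathscr{Y}^*\oplus_{p'}\mathscr{Z}^*$, and the kernel of $H^*$ via Lemma~\ref{adjoint_H}. The paper organizes these three identifications in the same order and with the same case split $p=1$ versus $p\in(1,\infty)$.
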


\begin{proof} 
The regularized extremal problem \eqref{regprob} with $p\in[1,+\infty)$ can be reformulated as in
\eqref{rewritten_regularization}, which has the form \eqref{best_app} with $\mathscr{A}$, $\mathscr{B}$ and $\mathbf{b}_0$ being replaced by $\mathscr{X}$, $\mathscr{Y}\oplus_p \mathscr{Z}$ and $(\mathbf{y}_0,0)$, respectively, and $H$ being defined by \eqref{operator_H}. 
We then apply Theorem \ref{best_app_dual} to problem \eqref{rewritten_regularization} and obtain that 
\begin{equation}\label{apply_22_32}
    \inf\left\{\|(\mathbf{y}_0,0)-H\mathbf{x}\|_{\mathscr{Y}\oplus_p\mathscr{Z}}: \mathbf{x} \in \mathscr{X} \right\}
    =\sup\Bigg\{|\langle \mathbf{y}_0, \lambda\rangle_{\mathscr{Y}}|:\|(\lambda,\mu)\|_{\mathscr{Y}^*\oplus_{p'}\mathscr{Z}^*}\leq1,\ (\lambda,\mu) \in \kernel H^* \Bigg\}.
\end{equation}
Lemma \ref{adjoint_H} ensures that $(\lambda,\mu) \in \kernel H^*$ if and only if $ \lambda\in\mathscr{Y}^*$, $\mu\in\mathscr{Z}^*$ satisfy 
$A^*\lambda+\rho^{1/p}B^*\mu=0.$
Substituting the above equation into equation \eqref{apply_22_32}, noting that problems \eqref{regprob} and
\eqref{rewritten_regularization} are equivalent, we get that 
\begin{align*}
   & \inf \big\{ \big(\|\mathbf{y}_0-A\mathbf{x}\|^p_{\mathscr{Y}}+\rho \|B\mathbf{x}\|^p_{\mathscr{Z}}\big)^{1/p} :\ \mathbf{x} \in \mathscr{X} \big\}\nonumber\\ 
   =&\sup\Bigg\{|\langle \mathbf{y}_0, \lambda\rangle_{\mathscr{Y}}|:\|(\lambda,\mu)\|_{\mathscr{Y}^*\oplus_{p'}\mathscr{Z}^*}\leq1,\   A^*\lambda+\rho^{1/p}B^*\mu=0 \Bigg\}.
 \end{align*}
When $p=1$ and $p'=+\infty$, substituting the norm $\|\cdot\|_{\mathscr{Y}^*\oplus_{\infty}\mathscr{Z}^*}$ into the above equation leads to equation \eqref{extremalprobs_p=1}. 
Likewise, when $p,p'\in(1,+\infty)$, substituting the norm $\|\cdot\|_{\mathscr{Y}^*\oplus_{p'}\mathscr{Z}^*}$ into the above equation leads to equation \eqref{extremalprobs_p>1}.  
\end{proof}

In practice, such as in the machine learning problem \eqref{genregprob0}, the space $\mathscr{Y}$ is finite dimensional.  As a result, the dual extremal problem (that is, the suprema in \eqref{extremalprobs_p=1} or \eqref{extremalprobs_p>1}) has only finitely many parameters, namely, the components of the vector $\lambda\in\mathscr{Y}^*$.  The  dual problem which is of {\it finite dimension} can therefore be solved using numerical methods. In this way, the duality argument offers a useful reduction. 
We describe this practical case as follows. Let $\mathscr{Y}=\mathbb{R}^m$ endowed with a norm $\|\cdot\|_{\mathbb{R}^m}$ and $\mathbf{y}_0=(y_j:j\in\mathbb{N}_m)\in\mathbb{R}^m$. We denote by $\|\cdot\|^*_{\mathbb{R}^m}$ the dual norm of $\|\cdot\|_{\mathbb{R}^m}$. Suppose that $\mathscr{X}$ is a real Banach space with the dual space $\mathscr{X}^*$ and $\mathbf{a}_j$, $j\in\mathbb{N}_m,$ are a finite number of linearly independent elements of $\mathscr{X}^*$ and the operator $A:\mathscr{X}\rightarrow\mathbb{R}^m$ is defined by 
$A\mathbf{x}:=\left(\langle\mathbf{x},\mathbf{a}_j\rangle_{\mathscr{X}}:j\in\mathbb{N}_m\right)$, for all $\mathbf{x}\in\mathscr{X}$.
It follows from the definition of $A$ that for all $\mathbf{x}\in\mathscr{X}$ and all $\lambda:=(\lambda_j:j\in\mathbb{N}_m)\in\mathbb{R}^m$,
$
\langle \mathbf{x},A^*\lambda\rangle_{\mathscr{X}}
=\sum_{j\in\mathbb{N}_m}\lambda_j\langle\mathbf{x},\mathbf{a}_j\rangle_{\mathscr{X}}
=\left\langle\mathbf{x},\sum_{j\in\mathbb{N}_m}\lambda_j\mathbf{a}_j\right\rangle_{\mathscr{X}},
$
which leads to
\begin{equation}\label{adjoint-operator}
A^*\lambda=\sum_{j\in\mathbb{N}_m}\lambda_j\mathbf{a}_j,\ \mbox{for all}\ \lambda:=(\lambda_j:j\in\mathbb{N}_m)\in\mathbb{R}^m.
\end{equation}
We also let $\mathscr{Z}$ be a real Banach space with the dual space  $\mathscr{Z}^*$. Suppose that $B:\mathscr{X}\rightarrow\mathscr{Z}$ is a bounded linear operator satisfying $\range B=\mathscr{Z}$, that is, $B^*$ is injective. It follows from equation \eqref{adjoint-operator} and $A^*\lambda+\rho^{1/p} B^*\mu=0$ that 
\begin{equation}\label{representer_theorem_B}
\mu=-\rho^{-1/p}\sum_{j\in\mathbb{N}_m}\lambda_j(B^*)^{-1}\mathbf{a}_j.
\end{equation}
Consequently, we conclude by Theorem \ref{maindualthm} that 
\begin{align}
    &\inf \big\{ \big(\|\mathbf{y}_0-A\mathbf{x}\|_{\mathbb{R}^m}^p+\rho \|B\mathbf{x}\|^p_{\mathscr{Z}}\big)^{1/p} :\ \mathbf{x} \in \mathscr{X} \big\}\nonumber\\ 
    =& \sup\Bigg\{\sum_{j\in\mathbb{N}_m}y_j\lambda_j: \Big\|\Big((\lambda_j:j\in\mathbb{N}_m),-\rho^{-1/p}\sum_{j\in\mathbb{N}_m}\lambda_j(B^*)^{-1}\mathbf{a}_j\Big)\Bigg\|_{\mathbb{R}^m\oplus_{p'}\mathscr{Z}^*}\leq1\Bigg\}.\label{extremalprobs_p>1_finite}
\end{align}
The dual extremal problem in
\eqref{extremalprobs_p>1_finite}
has only a finite number of real parameters, namely the $m$ components of $\lambda$. Notice that the expression
$\big\|\big((\lambda_j:j\in\mathbb{N}_m),-\rho^{-1/p}\sum_{j\in\mathbb{N}_m}\lambda_j(B^*)^{-1}\mathbf{a}_j\big)\big\|_{\mathbb{R}^m\oplus_{p'}\mathscr{Z}^*}$ defines a norm on $\mathbb{R}^m$, which must be equivalent to the Euclidean norm.  
Consequently,
the dual problem is to maximize a linear function of $m$ variables over some compact set in $\mathbb{R}^m$. The maximum must be attained, though not necessarily uniquely. The fact that the solution $(\hat\lambda,\hat\mu)$ to the dual problem \eqref{extremalprobs_p>1_finite} takes the form 
\eqref{representer_theorem_B}
can be viewed as a {\it representer theorem}. 
The original representer theorem \cite{scholkopf2001} for a learning method was in the setting of a Hilbert space, which is self-dual. 
Representer theorems for solutions of regularization problems in Banach
spaces have received considerable attention in the literature. A systematic study of this topic was conducted in \cite{WX2020}. The resulting representer theorem states that the
solution lies in a subdifferential set of the norm function evaluated at a finite linear combination of given functionals. The regularization problem in an infinite dimensional Banach space was then reduced to a finite dimensional optimization problem of the coefficients of the linear combination. The dual problem \eqref{extremalprobs_p>1_finite} coincides with the resulting finite dimensional optimization problem in the representer theorem.

By combining Propositions \ref{norming_functional_general} and \ref{normfcnldirsum}, we can characterize the solution of the regularized extremal problem  \eqref{regprob} with $p\in(1,+\infty)$ by using the norming functionals of the solution of the dual problem stated in Theorem \ref{maindualthm}.

\begin{proposition}\label{norming_functional_general_regularization_p>1}
Suppose that $\mathscr{X},\mathscr{Y}$ and $\mathscr{Z}$ are real Banach spaces with the respective dual spaces $\mathscr{X}^*,\mathscr{Y}^*$ and $\mathscr{Z}^*$,  $A:\mathscr{X}\rightarrow\mathscr{Y}$, $B:\mathscr{X}\rightarrow\mathscr{Z}$ are bounded linear operators with the adjoint operators $A^*,B^*$, respectively. Let 
$\mathbf{y}_0\in\mathscr{Y}$, $\rho>0$ and $p,p'\in(1,+\infty)$ satisfy $1/p+1/p'=1$. If $\hat{\mathbf{x}} \in \mathscr{X}$ has the property that 
\begin{equation}\label{valofinf0_regularization_p>1}
\big(\|\mathbf{y}_0-A\hat{\mathbf{x}}\|^p_{\mathscr{Y}}+\rho \|B\hat{\mathbf{x}}\|^p_{\mathscr{Z}}\big)^{1/p}=\inf \big\{ \big(\|\mathbf{y}_0-A\mathbf{x}\|^p_{\mathscr{Y}}+\rho \|B\mathbf{x}\|^p_{\mathscr{Z}}\big)^{1/p} :\ \mathbf{x} \in \mathscr{X} \big\}>0,
\end{equation} and $(\hat{\lambda},\hat{\mu}) \in \mathscr{Y}^*\oplus_{p'}\mathscr{Z}^*$ satisfies $\big(\|\hat\lambda\|^{p'}_{\mathscr{Y}^*}+\| \hat\mu\|_{\mathscr{Z}^*}^{p'}\big)^{1/{p'}}=1$,  $A^*\hat\lambda+\rho^{1/p}B^*\hat\mu=0$ and 
\begin{equation*}\label{valofinf_regularization_p>1}
\langle \mathbf{y}_0, \hat\lambda\rangle_{\mathscr{Y}} =\sup\Bigg\{|\langle \mathbf{y}_0, \lambda\rangle_{\mathscr{Y}}|:\big(\|\lambda\|^{p'}_{\mathscr{Y}^*}+\| \mu\|_{\mathscr{Z}^*}^{p'}\big)^{1/{p'}}\leq1,\  A^*\lambda+\rho^{1/p}B^*\mu=0 \Bigg\}>0,
\end{equation*} 
then the following statements hold.
 \begin{enumerate}
        \item If $\hat\mu=0$,
        then $B\hat{\mathbf{x}}=0$ and $\|\mathbf{y}_0 - A\hat{\mathbf{x}}\|_{\mathscr{Y}}=\langle \mathbf{y}_0, \hat\lambda\rangle_{\mathscr{Y}} $;
        
        %\item If $\hat\lambda=0$, then $A\hat{\mathbf{x}}=\mathbf{y}_0$ and $\rho^{1/p}\|B\hat{\mathbf{x}}\|_{\mathscr{Z}}=\langle \mathbf{y}_0, \hat\lambda\rangle_{\mathscr{Y}}$;
        
        \item If $\hat\lambda$ and $\hat\mu$ are both nonzero, then $\mathbf{y_0}-A\hat{\mathbf{x}}\neq 0$, $B\hat{\mathbf{x}}\neq 0$, and $\alpha(\mathbf{y_0}-A\hat{\mathbf{x}})$ is norming for $\hat\lambda$ and $\beta B\hat{\mathbf{x}}$ is norming for $\hat\mu$, where  $\alpha:=\big(1+\|\hat\mu\|^{p'}_{\mathscr{Z}^*}/\|\hat\lambda\|_{\mathscr{Y}^*}^{p'}\big)^{1/{p}}/\langle \mathbf{y}_0, \hat\lambda\rangle_{\mathscr{Y}}$ and  $\beta:=-\rho^{1/p}\big(1+\|\hat\lambda\|^{p'}_{\mathscr{Y}^*}/\|\hat\mu\|_{\mathscr{Z}^*}^{p'}\big)^{1/{p}}/\langle \mathbf{y}_0, \hat\lambda\rangle_{\mathscr{Y}}$.
       \end{enumerate}
\end{proposition}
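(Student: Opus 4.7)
The plan is to reduce everything to Proposition~\ref{norming_functional_general} and Proposition~\ref{normfcnldirsum} via the reformulation \eqref{rewritten_regularization} using the operator $H$ from \eqref{operator_H}. The regularized extremal problem matches the abstract setup with $\mathscr{A}:=\mathscr{X}$, $\mathscr{B}:=\mathscr{Y}\oplus_{p}\mathscr{Z}$, $\mathbf{b}_0:=(\mathbf{y}_0,0)$, so the primal attainment \eqref{valofinf0_regularization_p>1} together with the (positive) dual attainment hypothesis matches the hypotheses of Proposition~\ref{norming_functional_general}. That proposition then gives two norming statements in the direct sum space: the unit vector $(\mathbf{y}_0-A\hat{\mathbf{x}},-\rho^{1/p}B\hat{\mathbf{x}})/N$ is norming for $(\hat\lambda,\hat\mu)$, and $(\hat\lambda,\hat\mu)$ is norming for $(\mathbf{y}_0-A\hat{\mathbf{x}},-\rho^{1/p}B\hat{\mathbf{x}})$, where $N=(\|\mathbf{y}_0-A\hat{\mathbf{x}}\|_{\mathscr{Y}}^p+\rho\|B\hat{\mathbf{x}}\|_{\mathscr{Z}}^p)^{1/p}$. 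A short calculation using $A^*\hat\lambda+\rho^{1/p}B^*\hat\mu=0$ shows $\langle\mathbf{y}_0-A\hat{\mathbf{x}},\hat\lambda\rangle_{\mathscr{Y}}+\langle-\rho^{1/p}B\hat{\mathbf{x}},\hat\mu\rangle_{\mathscr{Z}}=\langle\mathbf{y}_0,\hat\lambda\rangle_{\mathscr{Y}}$, and therefore $\langle\mathbf{y}_0,\hat\lambda\rangle_{\mathscr{Y}}=N>0$.

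Next I would feed these two direct-sum norming relations into Proposition~\ref{normfcnldirsum}, with $(\mathbf{a},\mathbf{b})=(\mathbf{y}_0-A\hat{\mathbf{x}},-\rho^{1/p}B\hat{\mathbf{x}})$ and $(\lambda,\mu)=(\hat\lambda,\hat\mu)$. For statement~1, the hypothesis $\hat\mu=0$ rules out cases~2 and~3 of Proposition~\ref{normfcnldirsum}, forcing case~1: $\mathbf{b}=0$, i.e.\ $B\hat{\mathbf{x}}=0$, and $\hat\lambda$ norming for $\mathbf{y}_0-A\hat{\mathbf{x}}$. The hypothesis $A^*\hat\lambda+\rho^{1/p}B^*\hat\mu=0$ then collapses to $A^*\hat\lambda=0$, so $\langle\mathbf{y}_0,\hat\lambda\rangle_{\mathscr{Y}}=\langle\mathbf{y}_0-A\hat{\mathbf{x}},\hat\lambda\rangle_{\mathscr{Y}}=\|\mathbf{y}_0-A\hat{\mathbf{x}}\|_{\mathscr{Y}}$, which is the claim.

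For statement~2, with $\hat\lambda\neq 0$ and $\hat\mu\neq 0$, cases~1 and~2 of Proposition~\ref{normfcnldirsum} are excluded, leaving case~3: both residuals $\mathbf{y}_0-A\hat{\mathbf{x}}$ and $B\hat{\mathbf{x}}$ are nonzero, and the scaled functionals $(1+\|\mathbf{b}\|_{\mathscr{Z}}^p/\|\mathbf{a}\|_{\mathscr{Y}}^p)^{1/p'}\hat\lambda$ and $(1+\|\mathbf{a}\|_{\mathscr{Y}}^p/\|\mathbf{b}\|_{\mathscr{Z}}^p)^{1/p'}\hat\mu$ are norming for $\mathbf{a}$ and $\mathbf{b}$ respectively. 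Dually, unit vectors proportional to $\mathbf{a}$ and $\mathbf{b}$ are norming for $\hat\lambda$ and $\hat\mu$. To finish, I need to recast the Proposition~\ref{normfcnldirsum} constants, written in terms of $\|\mathbf{a}\|_{\mathscr{Y}}$ and $\|\mathbf{b}\|_{\mathscr{Z}}$, into the constants $\alpha,\beta$, written in terms of $\|\hat\lambda\|_{\mathscr{Y}^*}$, $\|\hat\mu\|_{\mathscr{Z}^*}$, and $\langle\mathbf{y}_0,\hat\lambda\rangle_{\mathscr{Y}}$. This is the only genuinely computational step. The key identities are $\|\hat\lambda\|_{\mathscr{Y}^*}^{p'}=\|\mathbf{a}\|_{\mathscr{Y}}^p/N^p$ and $\|\hat\mu\|_{\mathscr{Z}^*}^{p'}=\|\mathbf{b}\|_{\mathscr{Z}}^p/N^p$ (which drop out of the norming equalities combined with $\|\hat\lambda\|_{\mathscr{Y}^*}^{p'}+\|\hat\mu\|_{\mathscr{Z}^*}^{p'}=1$), together with $\langle\mathbf{y}_0,\hat\lambda\rangle_{\mathscr{Y}}=N$; from these one checks $(1+\|\hat\mu\|_{\mathscr{Z}^*}^{p'}/\|\hat\lambda\|_{\mathscr{Y}^*}^{p'})^{1/p}/\langle\mathbf{y}_0,\hat\lambda\rangle_{\mathscr{Y}}=1/\|\mathbf{a}\|_{\mathscr{Y}}$, giving $\alpha\mathbf{a}$ unit norm and norming for $\hat\lambda$, and similarly for $\beta$ with the appropriate sign from $\mathbf{b}=-\rho^{1/p}B\hat{\mathbf{x}}$.

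The main obstacle is purely bookkeeping: keeping track of the conjugate-exponent identity $p/p'=p-1$ and the sign introduced by $\mathbf{b}=-\rho^{1/p}B\hat{\mathbf{x}}$ so that $\beta$ acquires the factor $-\rho^{1/p}$ in its formula. No new functional-analytic input is needed beyond Propositions~\ref{norming_functional_general} and \ref{normfcnldirsum}; the duality constraint $A^*\hat\lambda+\rho^{1/p}B^*\hat\mu=0$ is what converts the cross-term in $\langle(\mathbf{y}_0,0)-H\hat{\mathbf{x}},(\hat\lambda,\hat\mu)\rangle$ into the clean identity $\langle\mathbf{y}_0,\hat\lambda\rangle_{\mathscr{Y}}=N$ that drives all the constants.
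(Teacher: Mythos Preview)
Your proposal is correct and follows essentially the same route as the paper: reformulate via the operator $H$ of \eqref{operator_H} so that Proposition~\ref{norming_functional_general} yields the direct-sum norming relation, then invoke Proposition~\ref{normfcnldirsum} case by case and unwind the constants using $\langle\mathbf{y}_0,\hat\lambda\rangle_{\mathscr{Y}}=N$. Your treatment of the constant identification in statement~2 is in fact more explicit than the paper's, which simply cites equation~\eqref{inf=sup1} and the definitions of $\alpha,\beta$ without writing out the identities $\|\hat\lambda\|_{\mathscr{Y}^*}^{p'}=\|\mathbf{a}\|_{\mathscr{Y}}^p/N^p$, $\|\hat\mu\|_{\mathscr{Z}^*}^{p'}=\|\mathbf{b}\|_{\mathscr{Z}}^p/N^p$.
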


\begin{proof}
As pointed out earlier, the regularized extremal problem \eqref{regprob} with $p\in(1,+\infty)$ can be reformulated as in \eqref{best_app} with $\mathscr{A}$, $\mathscr{B}$ and $\mathbf{b}_0$ being replaced by $\mathscr{X}$, $\mathscr{Y}\oplus_p \mathscr{Z}$ and $(\mathbf{y}_0,0)$, respectively, and $H$ being defined by \eqref{operator_H}. We conclude by Proposition  \ref{norming_functional_general} that $(\mathbf{y}_0 - A\hat{\mathbf{x}}, -\rho^{1/p} B\hat{\mathbf{x}})/\|(\mathbf{y}_0 - A\hat{\mathbf{x}}, -\rho^{1/p} B\hat{\mathbf{x}})\|_{\mathscr{Y}\oplus_{p}\mathscr{Z}}$ is norming for $(\hat{\lambda},\hat{\mu})\in\mathscr{Y}^*\oplus_{p'}\mathscr{Z}^*$. According to statement 1 of Proposition \ref{normfcnldirsum}, we have that if $\hat\mu=0$, then $B\hat{\mathbf{x}}=0$. This, together with  \begin{equation}\label{inf=sup1}
(\|\mathbf{y}_0-A\hat{\mathbf{x}}\|^p_{\mathscr{Y}}+\rho \|B\hat{\mathbf{x}}\|^p_{\mathscr{Z}})^{1/p}=\langle \mathbf{y}_0, \hat\lambda\rangle_{\mathscr{Y}},
\end{equation}
leads to $\|\mathbf{y}_0-A\hat{\mathbf{x}}\|_{\mathscr{Y}}=\langle \mathbf{y}_0, \hat\lambda\rangle_{\mathscr{Y}}$. 
%Similarly, if $\hat\lambda=0$, statement 2 of Proposition \ref{normfcnldirsum} ensures that $A\hat{\mathbf{x}}=\mathbf{y}_0$. Again by equation \eqref{inf=sup1}, we get that $\rho^{1/p}\|B\hat{\mathbf{x}}\|_{\mathscr{Z}}=\langle \mathbf{y}_0, \hat\lambda\rangle_{\mathscr{Y}}$. 
For the case that $\hat\lambda, \hat\mu$ are both nonzero, we get by statement 3 of Proposition \ref{normfcnldirsum} that $\mathbf{y_0}-A\hat{\mathbf{x}}\neq 0$, $B\hat{\mathbf{x}}\neq 0$.  
Moreover, $\big(1+\|\hat\mu\|^{p'}_{\mathscr{Z}^*}/\|\hat\lambda\|_{\mathscr{Y}^*}^{p'}\big)^{1/{p}}(\mathbf{y_0}-A\hat{\mathbf{x}})/\|(\mathbf{y}_0 - A\hat{\mathbf{x}}, -\rho^{1/p} B\hat{\mathbf{x}})\|_{\mathscr{Y}\oplus_{p}\mathscr{Z}}$ is norming for $\hat\lambda$ and $\big(1+\|\hat\lambda\|^{p'}_{\mathscr{Z}^*}/\|\hat\mu\|_{\mathscr{Y}^*}^{p'}\big)^{1/{p}}(-\rho^{1/p} B\hat{\mathbf{x}})/\|(\mathbf{y}_0 - A\hat{\mathbf{x}}, -\rho^{1/p} B\hat{\mathbf{x}})\|_{\mathscr{Y}\oplus_{p}\mathscr{Z}}$ is norming for $\hat\mu$. By setting $\alpha$, $\beta$ as in this proposition and equation \eqref{inf=sup1}, we get the desired result.
\end{proof}

The next proposition concerns the characterization of the solution of the regularized extremal problem  \eqref{regprob} with $p=1$, which can be obtained by employing Propositions \ref{norming_functional_general} and \ref{normfcnldirsum_p=infty}. 
\begin{proposition}\label{norming_functional_general_regularization_p=1}
Suppose that $\mathscr{X},\mathscr{Y}$ and $\mathscr{Z}$ are real Banach spaces with the respective dual spaces $\mathscr{X}^*,\mathscr{Y}^*$ and $\mathscr{Z}^*$,  $A:\mathscr{X}\rightarrow\mathscr{Y}$, $B:\mathscr{X}\rightarrow\mathscr{Z}$ are bounded linear operators with the adjoint operators $A^*,B^*$, respectively. Let 
$\mathbf{y}_0\in\mathscr{Y}$ and $\rho>0$. If $\hat{\mathbf{x}} \in \mathscr{X}$ has the property that 
\begin{equation}\label{valofinf0_regularization_p=1}
\|\mathbf{y}_0-A\hat{\mathbf{x}}\|_{\mathscr{Y}}+\rho \|B\hat{\mathbf{x}}\|_{\mathscr{Z}}=\inf \big\{ \|\mathbf{y}_0-A\mathbf{x}\|_{\mathscr{Y}}+\rho \|B\mathbf{x}\|_{\mathscr{Z}} :\ \mathbf{x} \in \mathscr{X} \big\}>0,
\end{equation} and $(\hat{\lambda},\hat{\mu}) \in \mathscr{Y}^*\oplus_{\infty}\mathscr{Z}^*$ satisfies $\max\{\|\hat\lambda\|_{\mathscr{Y}^*}, \| \hat\mu\|_{\mathscr{Z}^*}\}=1$,  $A^*\hat\lambda+\rho B^*\hat\mu=0$ and 
\begin{equation*}\label{valofinf_regularization_p=1}
\langle \mathbf{y}_0, \hat\lambda\rangle_{\mathscr{Y}} =\sup\Bigg\{|\langle \mathbf{y}_0, \lambda\rangle_{\mathscr{Y}}|:\max\{\|\lambda\|_{\mathscr{Y}^*}, \| \mu\|_{\mathscr{Z}^*}\}\leq1,\  A^*\lambda+\rho B^*\mu=0 \Bigg\}>0,
\end{equation*}   
then the following statements hold.

  \begin{enumerate}
     \item If $\|\hat\lambda\|_{\mathscr{Y}^*} > \| \hat\mu\|_{\mathscr{Z}^*}$, then $B\hat{\mathbf{x}} = \mathbf{0}$ and $\|\mathbf{y}_0 - A\hat{\mathbf{x}}\|_{\mathscr{Y}}=\langle \mathbf{y}_0, \hat\lambda\rangle_{\mathscr{Y}} $;

     \item If $\|\hat\lambda\|_{\mathscr{Y}^*} < \| \hat\mu\|_{\mathscr{Z}^*}$, then $A\hat{\mathbf{x}}=\mathbf{y}_0$ and $\rho\|B\hat{\mathbf{x}}\|_{\mathscr{Z}}=\langle \mathbf{y}_0, \hat\lambda\rangle_{\mathscr{Y}} $;

     \item If $\|\hat\lambda\|_{\mathscr{Y}^*} = \| \hat\mu\|_{\mathscr{Z}^*}$, then $\hat{\mathbf{x}}$ satisfies one of the following three conditions: (i) $B\hat{\mathbf{x}} = \mathbf{0}$ and $\|\mathbf{y}_0 - A\hat{\mathbf{x}}\|_{\mathscr{Y}}=\langle \mathbf{y}_0, \hat\lambda\rangle_{\mathscr{Y}} $; (ii) $A\hat{\mathbf{x}}=\mathbf{y}_0$ and $\rho\|B\hat{\mathbf{x}}\|_{\mathscr{Z}}=\langle \mathbf{y}_0, \hat\lambda\rangle_{\mathscr{Y}} $; (iii)  $\mathbf{y_0}-A\hat{\mathbf{x}}\neq 0$, $B\hat{\mathbf{x}}\neq 0$,  $(\mathbf{y_0}-A\hat{\mathbf{x}})/\|\mathbf{y_0}-A\hat{\mathbf{x}}\|_{\mathscr{Y}}$ is norming for $\hat\lambda$ and $- B\hat{\mathbf{x}}/\|B\hat{\mathbf{x}}\|_{\mathscr{Z}}$ is norming for $\hat\mu$.
  \end{enumerate}
\end{proposition}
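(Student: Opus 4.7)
The plan is to mirror the proof of Proposition \ref{norming_functional_general_regularization_p>1}, replacing the $\oplus_p/\oplus_{p'}$ characterization with the one appropriate to the $\oplus_1/\oplus_\infty$ pair. First, I would recast the $p=1$ regularized problem \eqref{regprob} in the abstract form \eqref{best_app} by taking $\mathscr{A}=\mathscr{X}$, $\mathscr{B}=\mathscr{Y}\oplus_1\mathscr{Z}$, $\mathbf{b}_0=(\mathbf{y}_0,0)$, and $H\mathbf{x}=(A\mathbf{x},\rho B\mathbf{x})$ as in \eqref{operator_H}. The dual then lives in $\mathscr{Y}^*\oplus_\infty\mathscr{Z}^*$, and Lemma \ref{adjoint_H} identifies $H^*(\lambda,\mu)=A^*\lambda+\rho B^*\mu$, so that the hypothesis $(\hat\lambda,\hat\mu)\in\kernel H^*$ is exactly $A^*\hat\lambda+\rho B^*\hat\mu=0$.

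Applying Proposition \ref{norming_functional_general} to this reformulation, the normalized residual $(\mathbf{y}_0-A\hat{\mathbf{x}},-\rho B\hat{\mathbf{x}})/\|(\mathbf{y}_0-A\hat{\mathbf{x}},-\rho B\hat{\mathbf{x}})\|_{\mathscr{Y}\oplus_1\mathscr{Z}}$ is norming for $(\hat\lambda,\hat\mu)$. Combined with the coincidence of the primal and dual extremal values from Theorem \ref{maindualthm}, together with the annihilation identity $A^*\hat\lambda+\rho B^*\hat\mu=0$, this yields the chain
$$
\|\mathbf{y}_0-A\hat{\mathbf{x}}\|_{\mathscr{Y}}+\rho\|B\hat{\mathbf{x}}\|_{\mathscr{Z}}=\langle\mathbf{y}_0,\hat\lambda\rangle_{\mathscr{Y}}=\langle\mathbf{y}_0-A\hat{\mathbf{x}},\hat\lambda\rangle_{\mathscr{Y}}+\langle-\rho B\hat{\mathbf{x}},\hat\mu\rangle_{\mathscr{Z}}.
$$
Since each summand on the right is bounded in absolute value by the matching norm product and $\max\{\|\hat\lambda\|_{\mathscr{Y}^*},\|\hat\mu\|_{\mathscr{Z}^*}\}=1$, the equality of the two extremes forces the two term-by-term identities $\langle\mathbf{y}_0-A\hat{\mathbf{x}},\hat\lambda\rangle_{\mathscr{Y}}=\|\mathbf{y}_0-A\hat{\mathbf{x}}\|_{\mathscr{Y}}$ and $\langle -B\hat{\mathbf{x}},\hat\mu\rangle_{\mathscr{Z}}=\|B\hat{\mathbf{x}}\|_{\mathscr{Z}}$.

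The three statements then drop out from a case analysis on $\|\hat\lambda\|_{\mathscr{Y}^*}$ versus $\|\hat\mu\|_{\mathscr{Z}^*}$. For statement (1), the strict bound $\|\hat\mu\|_{\mathscr{Z}^*}<1$ is incompatible with $\langle -B\hat{\mathbf{x}},\hat\mu\rangle_{\mathscr{Z}}=\|B\hat{\mathbf{x}}\|_{\mathscr{Z}}$ unless $B\hat{\mathbf{x}}=\mathbf{0}$; the identity $\|\mathbf{y}_0-A\hat{\mathbf{x}}\|_{\mathscr{Y}}=\langle \mathbf{y}_0,\hat\lambda\rangle_{\mathscr{Y}}$ then drops out of the displayed chain. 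Statement (2) is symmetric. For the balanced case of statement (3), either residual is allowed to vanish, and the two term-by-term identities, after normalization, translate precisely to the sub-cases (i), (ii), and (iii) according to whether one, the other, or neither residual is zero; in sub-case (iii), $(\mathbf{y}_0-A\hat{\mathbf{x}})/\|\mathbf{y}_0-A\hat{\mathbf{x}}\|_{\mathscr{Y}}$ and $-B\hat{\mathbf{x}}/\|B\hat{\mathbf{x}}\|_{\mathscr{Z}}$ are unit vectors that realize $\|\hat\lambda\|_{\mathscr{Y}^*}=1$ and $\|\hat\mu\|_{\mathscr{Z}^*}=1$ respectively.

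The main point of care, rather than a genuine obstacle, is that Proposition \ref{normfcnldirsum_p=infty} as stated handles $\oplus_\infty$ on the primal side, whereas here the functionals $(\hat\lambda,\hat\mu)$ live in $\oplus_\infty$ while the vectors they norm live in $\oplus_1$. The companion characterization needed in this dual direction follows by the same attainment-of-H\"older argument as Proposition \ref{normfcnldirsum_p=infty}, and once in place, the remainder of the proof is bookkeeping that parallels the proof of Proposition \ref{norming_functional_general_regularization_p>1} exactly.
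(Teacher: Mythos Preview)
Your proposal is correct and follows essentially the same route as the paper's proof: recast the $p=1$ problem in the abstract form with $\mathscr{B}=\mathscr{Y}\oplus_1\mathscr{Z}$, invoke Proposition~\ref{norming_functional_general} to get that the normalized residual norms $(\hat\lambda,\hat\mu)$, and then split into cases on $\|\hat\lambda\|_{\mathscr{Y}^*}$ versus $\|\hat\mu\|_{\mathscr{Z}^*}$. The one cosmetic difference is how the ``direction'' issue you flag is resolved: rather than proving a companion statement or inlining the H\"older-equality argument as you do, the paper simply applies Proposition~\ref{normfcnldirsum_p=infty} with $\mathscr{A}:=\mathscr{Y}^*$ and $\mathscr{B}:=\mathscr{Z}^*$, so that $(\hat\lambda,\hat\mu)$ plays the role of $(\mathbf{a},\mathbf{b})\in\mathscr{A}\oplus_\infty\mathscr{B}$ and the residual, via the canonical embedding into the bidual, plays the role of the norming functional in $\mathscr{A}^*\oplus_1\mathscr{B}^*$.
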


\begin{proof}
Note that the regularized extremal problem \eqref{regprob} can be reformulated as in \eqref{best_app} with $\mathscr{A}$, $\mathscr{B}$ and $\mathbf{b}_0$ being replaced by $\mathscr{X}$, $\mathscr{Y}\oplus_p \mathscr{Z}$ and $(\mathbf{y}_0,0)$, respectively, and $H$ being defined by \eqref{operator_H}. Proposition  \ref{norming_functional_general} ensures that $(\mathbf{y}_0 - A\hat{\mathbf{x}}, -\rho B\hat{\mathbf{x}})/\|(\mathbf{y}_0 - A\hat{\mathbf{x}}, -\rho B\hat{\mathbf{x}})\|_{\mathscr{Y}\oplus_{1}\mathscr{Z}}$ is norming for $(\hat{\lambda},\hat{\mu})\in\mathscr{Y}^*\oplus_{\infty}\mathscr{Z}^*$.
We characterize the norming vector by Proposition \ref{normfcnldirsum_p=infty} with $\mathscr{A}:=\mathscr{Y}^*$ and $\mathscr{B}:=\mathscr{Z}^*$. If $\|\hat\lambda\|_{\mathscr{Y}^*} > \| \hat\mu\|_{\mathscr{Z}^*}$, statement 1 of Proposition \ref{normfcnldirsum_p=infty} leads directly to $B\hat{\mathbf{x}}=0$. Moreover, Theorem \ref{maindualthm} guarantees that \begin{equation}\label{inf=sup}
\|\mathbf{y}_0-A\hat{\mathbf{x}}\|_{\mathscr{Y}}+\rho \|B\hat{\mathbf{x}}\|_{\mathscr{Z}}=\langle \mathbf{y}_0, \hat\lambda\rangle_{\mathscr{Y}},
\end{equation}
which together with $B\hat{\mathbf{x}}=0$ leads to $\|\mathbf{y}_0-A\hat{\mathbf{x}}\|_{\mathscr{Y}}=\langle \mathbf{y}_0, \hat\lambda\rangle_{\mathscr{Y}}$. If $\|\hat\lambda\|_{\mathscr{Y}^*} < \| \hat\mu\|_{\mathscr{Z}^*}$, we get by statement 2 of Proposition \ref{normfcnldirsum_p=infty} that $A\hat{\mathbf{x}}=\mathbf{y}_0$. Again by equation \eqref{inf=sup}, we obtain that $\rho\|B\hat{\mathbf{x}}\|_{\mathscr{Z}}=\langle \mathbf{y}_0, \hat\lambda\rangle_{\mathscr{Y}}$. It suffices to consider the case that $\|\hat\lambda\|_{\mathscr{Y}^*} = \| \hat\mu\|_{\mathscr{Z}^*}$. In this case, Statement 3 of Proposition \ref{normfcnldirsum_p=infty} shows that the norming vector  $(\mathbf{y}_0 - A\hat{\mathbf{x}}, -\rho B\hat{\mathbf{x}})/\|(\mathbf{y}_0 - A\hat{\mathbf{x}}, -\rho B\hat{\mathbf{x}})\|_{\mathscr{Y}\oplus_{1}\mathscr{Z}}$ may satisfy one of three conditions. By similar arguments as above, the first one leads to $B\hat{\mathbf{x}} = \mathbf{0}$ and $\|\mathbf{y}_0 - A\hat{\mathbf{x}}\|_{\mathscr{Y}}=\langle \mathbf{y}_0, \hat\lambda\rangle_{\mathscr{Y}} $ and the second one ensures that $A\hat{\mathbf{x}}=\mathbf{y}_0$ and $\rho\|B\hat{\mathbf{x}}\|_{\mathscr{Z}}=\langle \mathbf{y}_0, \hat\lambda\rangle_{\mathscr{Y}}$. In addition, the third condition coincides with  $\mathbf{y_0}-A\hat{\mathbf{x}}\neq 0$, $B\hat{\mathbf{x}}\neq 0$,  $(\mathbf{y_0}-A\hat{\mathbf{x}})/\|\mathbf{y_0}-A\hat{\mathbf{x}}\|_{\mathscr{Y}}$ is norming for $\hat\lambda$ and $- B\hat{\mathbf{x}}/\|B\hat{\mathbf{x}}\|_{\mathscr{Z}}$ is norming for $\hat\mu$. This completes the proof of this proposition.
\end{proof}

In the special case that $\mathscr{X}$, $\mathscr{Y}$ and $\mathscr{Z}$ have pre-dual spaces, there is a similar dual problem. 

\begin{theorem}\label{mainpredualthm}  
Suppose that $\mathscr{X},\mathscr{Y}$ and $\mathscr{Z}$ are real
Banach spaces with the respective pre-dual spaces $\mathscr{X}_*,\mathscr{Y}_*$ and $\mathscr{Z}_*$,   $A_*:\mathscr{Y}_*\rightarrow\mathscr{X}_*$, $B_*:\mathscr{Z}_*\rightarrow\mathscr{X}_*$ are bounded linear operators with the adjoint operators $A,B$, respectively. Let 
$\mathbf{y}_0\in\mathscr{Y}$ and $\rho>0$. If $\mathscr{Y}$ is finite dimensional and $\range B=\mathscr{Z}$, then there holds 
\begin{align}\label{extremalprobs3x}
 &\inf\big\{ \|\mathbf{y}_0-A\mathbf{x}\|_{\mathscr{Y}}+\rho \|B\mathbf{x}\|_{\mathscr{Z}} :\ \mathbf{x} \in \mathscr{X} \big\} \nonumber\\ 
   =&
   \sup\Bigg\{|\langle\mathbf{y}_0, \lambda\rangle_{\mathscr{Y}}|:\ \max\{\|\lambda\|_{\mathscr{Y}_*}, \|\mu\|_{\mathscr{Z}_*}\}\leq1,\ A_*\lambda+\rho B_*\mu=0\Bigg\}.
\end{align}
If $p,p'\in(1,+\infty)$ such that $1/p+1/{p'} = 1$, then there holds 
\begin{align}\label{extremalprobs4x}
     &\inf \big\{ \big(\|\mathbf{y}_0-A\mathbf{x}\|^p_{\mathscr{Y}}+\rho \|B\mathbf{x}\|^p_{\mathscr{Z}}\big)^{1/p} :\ \mathbf{x} \in \mathscr{X} \big\}\nonumber\\ 
    =& \sup\Bigg\{|\langle\mathbf{y}_0,\lambda\rangle_{\mathscr{Y}}|:\big(\|\lambda\|^{p'}_{\mathscr{Y}_*}+\| \mu\|_{\mathscr{Z}_*}^{p'}\big)^{1/{p'}}\leq1,\ A_*\lambda+\rho^{1/p}B_*\mu=0 \Bigg\}.
\end{align}
\end{theorem}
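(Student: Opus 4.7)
The plan is to follow the proof of Theorem \ref{maindualthm} nearly verbatim, with Theorem \ref{best_app_dual_pre_dual} in place of Theorem \ref{best_app_dual}. First, I would reformulate \eqref{regprob} as $\inf\{\|(\mathbf{y}_0,\mathbf{0}) - H\mathbf{x}\|_{\mathscr{Y}\oplus_p\mathscr{Z}} : \mathbf{x} \in \mathscr{X}\}$ with $H$ defined by \eqref{operator_H}. Under the pre-dual assumptions, $\mathscr{Y}\oplus_p\mathscr{Z}$ admits $\mathscr{Y}_*\oplus_{p'}\mathscr{Z}_*$ as a pre-dual, by a routine adaptation of the dual-of-direct-sum identification already recalled in the paper. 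I would then define $H_*:\mathscr{Y}_*\oplus_{p'}\mathscr{Z}_*\to\mathscr{X}_*$ by $H_*(\lambda,\mu):=A_*\lambda + \rho^{1/p}B_*\mu$, and, by essentially the computation carried out for Lemma \ref{adjoint_H}, verify that $H$ is the adjoint of $H_*$. This places us in the framework of Theorem \ref{best_app_dual_pre_dual}.

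The main obstacle is verifying the hypothesis $\overline{\mathscr{M}}^{w^*} = \mathscr{M}$, and this is precisely where both structural assumptions on $\mathscr{Y}$ and on $B$ are used. I would first show that $\range H$ is already norm-closed. Given $(A\mathbf{x}_n, \rho^{1/p}B\mathbf{x}_n) \to (\mathbf{y},\mathbf{z})$ in $\mathscr{Y}\oplus_p\mathscr{Z}$, surjectivity of $B$ together with the open mapping theorem produces a topological isomorphism $\mathscr{X}/\kernel B \cong \mathscr{Z}$, so I can fix $\mathbf{x}^*\in\mathscr{X}$ with $B\mathbf{x}^* = \rho^{-1/p}\mathbf{z}$ and select $\mathbf{k}_n\in\kernel B$ with $\mathbf{x}_n - \mathbf{x}^* - \mathbf{k}_n \to \mathbf{0}$. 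Then $A\mathbf{k}_n \to \mathbf{y} - A\mathbf{x}^*$, and because $\mathscr{Y}$ is finite dimensional the subspace $A(\kernel B)$ is closed, so $\mathbf{y}-A\mathbf{x}^* = A\mathbf{k}$ for some $\mathbf{k}\in\kernel B$, giving $(\mathbf{y},\mathbf{z}) = H(\mathbf{x}^* + \mathbf{k}) \in \range H$. Since $H = (H_*)^*$ has norm-closed range, the classical closed range theorem implies $\range H = (\kernel H_*)^{\perp}$, viewed as the annihilator in $\mathscr{Y}\oplus_p\mathscr{Z}$ of a subset of the pre-dual; this set is manifestly weak$^*$ closed, so $\mathscr{M} = \range H = \overline{\mathscr{M}}^{w^*}$.

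The remainder is bookkeeping. Theorem \ref{best_app_dual_pre_dual} yields that the infimum in \eqref{regprob} equals $\sup\{|\langle(\mathbf{y}_0,\mathbf{0}),(\lambda,\mu)\rangle_{\mathscr{Y}\oplus_p\mathscr{Z}}| : \|(\lambda,\mu)\|_{\mathscr{Y}_*\oplus_{p'}\mathscr{Z}_*}\leq 1,\ (\lambda,\mu)\in\kernel H_*\}$. Since $\langle(\mathbf{y}_0,\mathbf{0}),(\lambda,\mu)\rangle_{\mathscr{Y}\oplus_p\mathscr{Z}} = \langle\mathbf{y}_0,\lambda\rangle_{\mathscr{Y}}$ and $(\lambda,\mu)\in\kernel H_*$ is precisely the condition $A_*\lambda + \rho^{1/p}B_*\mu = 0$, substituting the explicit norms on $\mathscr{Y}_*\oplus_{p'}\mathscr{Z}_*$, namely the max-norm for $p=1$ and the $\ell^{p'}$-style expression for $p\in(1,+\infty)$, produces \eqref{extremalprobs3x} and \eqref{extremalprobs4x} respectively.
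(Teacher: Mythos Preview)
Your proposal is correct and follows the same overall architecture as the paper: rewrite \eqref{regprob} as a best-approximation problem for $H$ on $\mathscr{Y}\oplus_p\mathscr{Z}$, identify the pre-dual $\mathscr{Y}_*\oplus_{p'}\mathscr{Z}_*$ and the operator $H_*$, check the weak$^*$ closedness hypothesis of Theorem~\ref{best_app_dual_pre_dual}, and then read off the two formulas from the kernel condition and the direct-sum norm.

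The one place where you diverge from the paper is the verification of $\overline{\mathscr{M}}^{w^*}=\mathscr{M}$. The paper argues very briefly via a product formula for the weak$^*$ closure of $\mathscr{M}$ in terms of the weak$^*$ closures of $\{A\mathbf{x}\}$ and $\{\rho^{1/p}B\mathbf{x}\}$ separately; as written, that step is at best terse, since $\mathscr{M}$ is a graph-like subspace rather than a product. Your route is different and more self-contained: you first prove directly that $\range H$ is norm-closed (using surjectivity of $B$ and the open mapping theorem to control the $\mathscr{Z}$-component, then finite dimensionality of $\mathscr{Y}$ to force $A(\kernel B)$ closed), and then invoke the closed range theorem for $H=(H_*)^*$ to obtain $\range H=(\kernel H_*)^{\perp}$, which is automatically weak$^*$ closed as an annihilator of a subset of the pre-dual. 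This buys you a fully rigorous justification of the weak$^*$ closedness hypothesis with no appeal to product-closure identities, at the modest cost of importing the closed range theorem. The remaining bookkeeping is identical to the paper's.
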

\begin{proof}
As pointed out earlier, the regularized extremal problem \eqref{regprob} can be rewritten as in \eqref{best_app} with $\mathscr{A}$, $\mathscr{B}$ and $\mathbf{b}_0$ being replaced by $\mathscr{X}$, $\mathscr{Y}\oplus_p \mathscr{Z}$ and $(\mathbf{y}_0,0)$, respectively, and $H$ being defined by \eqref{operator_H}. We prove this theorem by using Theorem \ref{best_app_dual_pre_dual}. Note that $\mathscr{Y}\oplus_p \mathscr{Z}$ has the pre-dual space $\mathscr{Y}_*\oplus_{p'} \mathscr{Z}_*$ and $H$ is the adjoint operator of  $H_*:\mathscr{Y}_*\oplus_{p'} \mathscr{Z}_*\rightarrow\mathscr{X}_*$ defined by 
$H_*(\lambda,\mu):=A_*\lambda+\rho^{1/p}B_*\mu,$ for all  $(\lambda,\mu)\in\mathscr{Y}_*\oplus_{p'} \mathscr{Z}_*.$
% $$
% \langle \mathcal{H}_*(\lambda,\mu), \mathbf{x}\rangle_{\mathcal{X}_*} 
% =\langle A_*\lambda,\mathbf{x}\rangle_{\mathcal{X}_*}+ \rho^{1/p}\langle B_*\mu,\mathbf{x}\rangle_{\mathcal{X}_*}=\langle \lambda,A\mathbf{x}\rangle_{\mathcal{Y}}+ \rho^{1/p}\langle \mu,B\mathbf{x}\rangle_{\mathcal{Z}}=\langle (\lambda,\mu), \mathcal{H}\mathbf{x}\rangle_{\mathcal{Y}_*\oplus_{p'} \mathcal{Z}_*}.
% $$
By setting $\mathscr{M}:=\overline{\{ (Ax,\rho^{1/p}Bx): \ x\in\mathscr{X}\}},$
we have that 
$$
\overline{\mathscr{M}}^{w^*} =\overline{\{Ax: \ x\in\mathscr{X}\}}^{w^*}\times\overline{\{\rho^{1/p}Bx: \ x\in\mathscr{X}\}}^{w^*},
$$
which, together with the assumptions that $\mathscr{Y}$ is finite dimensional and $\range B=\mathscr{Z}$, yields that $\overline{\mathscr{M}}^{w^*}=\mathscr{M}$. That is, the hypotheses of Theorem \ref{best_app_dual_pre_dual} are satisfied.  Hence, Theorem \ref{best_app_dual_pre_dual} ensures that
\begin{align*}\label{simple_to_regularization1_pre_dual}
   &\inf \big\{ \big(\|\mathbf{y}_0-A\mathbf{x}\|^p_{\mathscr{Y}}+\rho \|B\mathbf{x}\|^p_{\mathscr{Z}}\big)^{1/p} :\ \mathbf{x} \in \mathscr{X} \big\}\\
   =&\sup\Bigg\{|\langle \mathbf{y}_0,\lambda \rangle_{\mathscr{Y}}|:\|(\lambda,\mu)\|_{\mathscr{Y}_*\oplus_{p'}\mathscr{Z}_*}\leq1,\ (\lambda,\mu) \in \kernel H_* \Bigg\}.
\end{align*}
Substituting the kernel of $H_*$ and the definition of the norm of the direct sum $\mathscr{Y}_*\oplus_{p'}\mathscr{Z}_*$, $p'\in(1,+\infty]$, into the above equation leads to 
the desired equations \eqref{extremalprobs3x} and \eqref{extremalprobs4x}. 
\end{proof}

By specializing Proposition \ref{norming_functional_predual} to the regularized extremal problem \eqref{regprob} when $\mathscr{X}$, $\mathscr{Y}$ and $\mathscr{Z}$ have pre-dual spaces, we relate the solution of \eqref{regprob} to the dual solution. The proofs of the following two propositions are similar to those of Propositions \ref{norming_functional_general_regularization_p>1} and \ref{norming_functional_general_regularization_p=1} and thus are omitted. 

\begin{proposition}\label{norming_functional_predual_regularization_p>1}
Suppose that $\mathscr{X},\mathscr{Y}$ and $\mathscr{Z}$ are real
Banach spaces with the respective pre-dual spaces $\mathscr{X}_*,\mathscr{Y}_*$ and $\mathscr{Z}_*$,   $A_*:\mathscr{Y}_*\rightarrow\mathscr{X}_*$, $B_*:\mathscr{Z}_*\rightarrow\mathscr{X}_*$ are bounded linear operators with the adjoint operators $A,B$, respectively.
In addition, suppose that $\mathscr{Y}$ is finite dimensional and $\range B=\mathscr{Z}$.   Let 
$\mathbf{y}_0\in\mathscr{Y}$, $\rho>0$ and $p,p'\in(1,+\infty)$, satisfy  $\textstyle{\frac{1}{p}}+\textstyle{\frac{1}{p'}} = 1$. If $\hat{\mathbf{x}} \in \mathscr{X}$ has the property \eqref{valofinf0_regularization_p>1} and $(\hat{\lambda},\hat{\mu}) \in \mathscr{Y}_*\oplus_{p'}\mathscr{Z}_*$ satisfies $\big(\|\hat\lambda\|^{p'}_{\mathscr{Y}_*}+\| \hat\mu\|_{\mathscr{Z}_*}^{p'}\big)^{1/{p'}}=1$, $A_*\hat\lambda+\rho^{1/p}B_*\hat\mu=0$ and 
\begin{equation*}\label{valofinf_regularization_predual}
\langle\mathbf{y}_0,\hat\lambda \rangle_{\mathscr{Y}} =\sup\Bigg\{|\langle \mathbf{y}_0,\lambda \rangle_{\mathscr{Y}}|:\big(\|\lambda\|^{p'}_{\mathscr{Y}_*}+\| \mu\|_{\mathscr{Z}_*}^{p'}\big)^{1/{p'}}\leq1,\  A_*\lambda+\rho^{1/p}B_*\mu=0
\Bigg\}>0,
\end{equation*}  
then the following statements hold.
 \begin{enumerate}
        \item If $\hat\mu=0$,
        then $B\hat{\mathbf{x}}=0$ and $\|\mathbf{y}_0 - A\hat{\mathbf{x}}\|_{\mathscr{Y}}=\langle\mathbf{y}_0,\hat\lambda \rangle_{\mathscr{Y}}$;
        
       % \item If $\hat\lambda=0$, then $A\hat{\mathbf{x}}=\mathbf{y}_0$ and $\rho^{1/p}\|B\hat{\mathbf{x}}\|_{\mathscr{Z}}=\langle\mathbf{y}_0,\hat\lambda \rangle_{\mathscr{Y}}$;
        
        \item If $\hat\lambda$ and $\hat\mu$ are both nonzero, then $\mathbf{y_0}-A\hat{\mathbf{x}}\neq 0$, $B\hat{\mathbf{x}}\neq 0$, and $\alpha(\mathbf{y_0}-A\hat{\mathbf{x}})$ is norming for $\hat\lambda$ and $\beta B\hat{\mathbf{x}}$ is norming for $\hat\mu$, where  $\alpha:=\big(1+\|\hat\mu\|^{p'}_{\mathscr{Z}_*}/\|\hat\lambda\|_{\mathscr{Y}_*}^{p'}\big)^{1/{p}}/\langle\mathbf{y}_0,\hat\lambda \rangle_{\mathscr{Y}}$ and  $\beta:=-\rho^{1/p}\big(1+\|\hat\lambda\|^{p'}_{\mathscr{Y}_*}/\|\hat\mu\|_{\mathscr{Z}_*}^{p'}\big)^{1/{p}}/\langle\mathbf{y}_0,\hat\lambda \rangle_{\mathscr{Y}}$.
       \end{enumerate}
\end{proposition}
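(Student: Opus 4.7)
The plan is to mirror the proof of Proposition \ref{norming_functional_general_regularization_p>1}, replacing the general duality (Proposition \ref{norming_functional_general}) with the pre-dual version (Proposition \ref{norming_functional_predual}). First, I would rewrite the regularized extremal problem \eqref{regprob} in the form \eqref{best_app}, by setting $\mathscr{A} := \mathscr{X}$, $\mathscr{B} := \mathscr{Y} \oplus_p \mathscr{Z}$, $\mathbf{b}_0 := (\mathbf{y}_0, 0)$, and letting $H$ be the operator defined in \eqref{operator_H}. As noted in the proof of Theorem \ref{mainpredualthm}, $H$ is the adjoint of the operator $H_* : \mathscr{Y}_* \oplus_{p'} \mathscr{Z}_* \to \mathscr{X}_*$ given by $H_*(\lambda,\mu) := A_*\lambda + \rho^{1/p} B_*\mu$, and the subspace $\mathscr{M} := \overline{\{H\mathbf{x} : \mathbf{x} \in \mathscr{X}\}}$ satisfies $\overline{\mathscr{M}}^{w^*} = \mathscr{M}$ under the hypotheses that $\mathscr{Y}$ is finite dimensional and $\range B = \mathscr{Z}$.

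Next, I would apply Proposition \ref{norming_functional_predual} to the reformulated problem. The hypotheses are verified: \eqref{valofinf0_regularization_p>1} provides the analog of the infimum being attained and positive, while the assumption on $(\hat{\lambda}, \hat{\mu})$ supplies the supremum being attained with $\|(\hat{\lambda}, \hat{\mu})\|_{\mathscr{Y}_* \oplus_{p'} \mathscr{Z}_*} = 1$ and $(\hat{\lambda}, \hat{\mu}) \in \kernel H_*$. Proposition \ref{norming_functional_predual} then yields that the unit vector
\[
\frac{(\mathbf{y}_0 - A\hat{\mathbf{x}}, -\rho^{1/p} B\hat{\mathbf{x}})}{\|(\mathbf{y}_0 - A\hat{\mathbf{x}}, -\rho^{1/p} B\hat{\mathbf{x}})\|_{\mathscr{Y} \oplus_p \mathscr{Z}}}
\]
is norming for $(\hat{\lambda}, \hat{\mu}) \in \mathscr{Y}_* \oplus_{p'} \mathscr{Z}_*$, where the denominator equals $\langle \mathbf{y}_0, \hat{\lambda} \rangle_{\mathscr{Y}}$ by Theorem \ref{mainpredualthm}.

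Finally, I would invoke Proposition \ref{normfcnldirsum} with $\mathscr{A}$ and $\mathscr{B}$ there taken to be $\mathscr{Y}$ and $\mathscr{Z}$, respectively, to split the norming relation into the two cases. In case $\hat{\mu} = 0$, statement 1 of Proposition \ref{normfcnldirsum} forces $-\rho^{1/p} B\hat{\mathbf{x}} = 0$, hence $B\hat{\mathbf{x}} = 0$; combined with $(\|\mathbf{y}_0 - A\hat{\mathbf{x}}\|_{\mathscr{Y}}^p + \rho \|B\hat{\mathbf{x}}\|_{\mathscr{Z}}^p)^{1/p} = \langle \mathbf{y}_0, \hat{\lambda} \rangle_{\mathscr{Y}}$, this gives $\|\mathbf{y}_0 - A\hat{\mathbf{x}}\|_{\mathscr{Y}} = \langle \mathbf{y}_0, \hat{\lambda} \rangle_{\mathscr{Y}}$. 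In case both $\hat{\lambda}, \hat{\mu}$ are nonzero, statement 3 of Proposition \ref{normfcnldirsum} gives $\mathbf{y}_0 - A\hat{\mathbf{x}} \neq 0$ and $B\hat{\mathbf{x}} \neq 0$, and the scaling factors $(1 + \|\hat{\mu}\|_{\mathscr{Z}_*}^{p'}/\|\hat{\lambda}\|_{\mathscr{Y}_*}^{p'})^{1/p}$ and $(1 + \|\hat{\lambda}\|_{\mathscr{Y}_*}^{p'}/\|\hat{\mu}\|_{\mathscr{Z}_*}^{p'})^{1/p}$ multiplied by the normalized norming vectors produce vectors norming for $\hat{\lambda}$ and $\hat{\mu}$, respectively. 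Dividing by $\|(\mathbf{y}_0 - A\hat{\mathbf{x}}, -\rho^{1/p} B\hat{\mathbf{x}})\|_{\mathscr{Y} \oplus_p \mathscr{Z}} = \langle \mathbf{y}_0, \hat{\lambda} \rangle_{\mathscr{Y}}$ then produces exactly the constants $\alpha$ and $\beta$ in the statement.

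The main obstacle is largely bookkeeping: tracking that the norming pairing used in Proposition \ref{norming_functional_predual} (which pairs $\mathscr{B}$ with $\mathscr{B}_*$) matches the pairing needed in Proposition \ref{normfcnldirsum}, and keeping the $-\rho^{1/p}$ factor consistent between the scaling and the sign of $\beta$. No genuinely new argument is needed beyond the verification that the pre-dual weak$^*$-closedness hypothesis of Theorem \ref{best_app_dual_pre_dual} propagates correctly through the direct sum construction, which was already established in the proof of Theorem \ref{mainpredualthm}.
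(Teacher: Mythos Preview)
Your proposal is correct and follows essentially the same approach as the paper, which explicitly states that the proof is similar to that of Proposition \ref{norming_functional_general_regularization_p>1} and omits it. You correctly swap Proposition \ref{norming_functional_general} for Proposition \ref{norming_functional_predual}, verify the weak$^*$-closedness hypothesis via Theorem \ref{mainpredualthm}, and then apply Proposition \ref{normfcnldirsum} to split into cases; the only minor slip is that when invoking Proposition \ref{normfcnldirsum} you should take $\mathscr{A}=\mathscr{Y}_*$, $\mathscr{B}=\mathscr{Z}_*$ with the roles of $p$ and $p'$ interchanged (so that the normalized vector in $\mathscr{Y}\oplus_p\mathscr{Z}=(\mathscr{Y}_*)^*\oplus_p(\mathscr{Z}_*)^*$ plays the role of the dual element norming $(\hat\lambda,\hat\mu)$), but you already flag this as bookkeeping and it causes no real difficulty.
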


\begin{proposition}\label{norming_functional_predual_regularization_p=1}
Suppose that $\mathscr{X},\mathscr{Y}$ and $\mathscr{Z}$ are real
Banach spaces with the respective pre-dual spaces $\mathscr{X}_*,\mathscr{Y}_*$ and $\mathscr{Z}_*$,   $A_*:\mathscr{Y}_*\rightarrow\mathscr{X}_*$, $B_*:\mathscr{Z}_*\rightarrow\mathscr{X}_*$ are bounded linear operators with the adjoint operators $A,B$, respectively.
In addition, suppose that $\mathscr{Y}$ is finite dimensional and $\range B=\mathscr{Z}$. Let 
$\mathbf{y}_0\in\mathscr{Y}$ and $\rho>0$. If $\hat{\mathbf{x}} \in \mathscr{X}$ has the property \eqref{valofinf0_regularization_p=1}
and $(\hat{\lambda},\hat{\mu}) \in \mathscr{Y}_*\oplus_{\infty}\mathscr{Z}_*$ satisfies $\max\{\|\hat\lambda\|_{\mathscr{Y}_*}, \| \hat\mu\|_{\mathscr{Z}_*}\}=1$,  $A_*\hat\lambda+\rho B_*\hat\mu=0$ and 
\begin{equation*}\label{valofinf_regularization}
\langle\mathbf{y}_0,\hat\lambda \rangle_{\mathscr{Y}} =\sup\Bigg\{|\langle  \mathbf{y}_0,\lambda \rangle_{\mathscr{Y}}|:\max\{\|\lambda\|_{\mathscr{Y}_*}, \| \mu\|_{\mathscr{Z}_*}\}\leq1,\  A_*\lambda+\rho B_*\mu=0 \Bigg\}>0,
\end{equation*}   
then the following statements hold.

  \begin{enumerate}
     \item If $\|\hat\lambda\|_{\mathscr{Y}_*} > \| \hat\mu\|_{\mathscr{Z}_*}$, then $B\hat{\mathbf{x}} = \mathbf{0}$ and $\|\mathbf{y}_0 - A\hat{\mathbf{x}}\|_{\mathscr{Y}}=\langle\mathbf{y}_0,\hat\lambda \rangle_{\mathscr{Y}} $;

     \item If $\|\hat\lambda\|_{\mathscr{Y}_*} < \| \hat\mu\|_{\mathscr{Z}_*}$, then $A\hat{\mathbf{x}}=\mathbf{y}_0$ and $\rho\|B\hat{\mathbf{x}}\|_{\mathscr{Z}}=\langle\mathbf{y}_0,\hat\lambda \rangle_{\mathscr{Y}} $;

     \item If $\|\hat\lambda\|_{\mathscr{Y}_*} = \| \hat\mu\|_{\mathscr{Z}_*}$, then $\hat{\mathbf{x}}$ satisfy one of the following three conditions: (i) $B\hat{\mathbf{x}} = \mathbf{0}$ and $\|\mathbf{y}_0 - A\hat{\mathbf{x}}\|_{\mathscr{Y}}=\langle\mathbf{y}_0,\hat\lambda \rangle_{\mathscr{Y}}$; (ii) $A\hat{\mathbf{x}}=\mathbf{y}_0$ and $\rho\|B\hat{\mathbf{x}}\|_{\mathscr{Z}}=\langle\mathbf{y}_0,\hat\lambda \rangle_{\mathscr{Y}} $; (iii)  $\mathbf{y_0}-A\hat{\mathbf{x}}\neq 0$, $B\hat{\mathbf{x}}\neq 0$,  $(\mathbf{y_0}-A\hat{\mathbf{x}})/\|\mathbf{y_0}-A\hat{\mathbf{x}}\|_{\mathscr{Y}}$ is norming for $\hat\lambda$ and $- B\hat{\mathbf{x}}/\|B\hat{\mathbf{x}}\|_{\mathscr{Z}}$ is norming for $\hat\mu$.
  \end{enumerate}
\end{proposition}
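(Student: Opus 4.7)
The plan is to mirror the proof of Proposition \ref{norming_functional_general_regularization_p=1} but pass to the pre-dual versions of the key tools. First, I would recast the regularized extremal problem \eqref{regprob} with $p=1$ as a best approximation problem of the form \eqref{best_app} by setting $\mathscr{A}:=\mathscr{X}$, $\mathscr{B}:=\mathscr{Y}\oplus_{1}\mathscr{Z}$, $\mathbf{b}_0:=(\mathbf{y}_0,0)$, and taking $H$ as defined in \eqref{operator_H}. The hypothesis that $\mathscr{Y}$ is finite-dimensional and $\range B=\mathscr{Z}$ ensures that the subspace $\mathscr{M}=\overline{\{Hx:x\in\mathscr{X}\}}$ is weak$^{*}$ closed in $\mathscr{Y}\oplus_{1}\mathscr{Z}$ (this is the same verification used in the proof of Theorem \ref{mainpredualthm}), and $H$ is the adjoint of the pre-dual operator $H_{*}(\lambda,\mu):=A_{*}\lambda+\rho B_{*}\mu$, so the hypotheses of Theorem \ref{best_app_dual_pre_dual} and hence Proposition \ref{norming_functional_predual} are satisfied.

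Next, I would apply Proposition \ref{norming_functional_predual} directly. Together with the duality identity $\|\mathbf{y}_0-A\hat{\mathbf{x}}\|_{\mathscr{Y}}+\rho\|B\hat{\mathbf{x}}\|_{\mathscr{Z}}=\langle\mathbf{y}_0,\hat\lambda\rangle_{\mathscr{Y}}$ guaranteed by \eqref{extremalprobs3x}, the conclusion is that the unit vector $(\mathbf{y}_0-A\hat{\mathbf{x}},-\rho B\hat{\mathbf{x}})/\big\|(\mathbf{y}_0-A\hat{\mathbf{x}},-\rho B\hat{\mathbf{x}})\big\|_{\mathscr{Y}\oplus_{1}\mathscr{Z}}$ in $\mathscr{Y}\oplus_{1}\mathscr{Z}$ is norming for $(\hat\lambda,\hat\mu)\in\mathscr{Y}_{*}\oplus_{\infty}\mathscr{Z}_{*}$. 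Because $\mathscr{Y}\oplus_{1}\mathscr{Z}$ is exactly the dual of $\mathscr{Y}_{*}\oplus_{\infty}\mathscr{Z}_{*}$, I can now invoke Proposition \ref{normfcnldirsum_p=infty} with $\mathscr{A}:=\mathscr{Y}_{*}$ and $\mathscr{B}:=\mathscr{Z}_{*}$, taking $(\mathbf{a},\mathbf{b})=(\hat\lambda,\hat\mu)$ and the norming functional equal to the above unit vector.

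Splitting into the three cases dictated by Proposition \ref{normfcnldirsum_p=infty}, the case $\|\hat\lambda\|_{\mathscr{Y}_{*}}>\|\hat\mu\|_{\mathscr{Z}_{*}}$ forces the $\mathscr{Z}$-component $-\rho B\hat{\mathbf{x}}$ of the norming functional to vanish, giving $B\hat{\mathbf{x}}=\mathbf{0}$; substituting back into the duality identity then yields $\|\mathbf{y}_0-A\hat{\mathbf{x}}\|_{\mathscr{Y}}=\langle\mathbf{y}_0,\hat\lambda\rangle_{\mathscr{Y}}$. Symmetrically, the case $\|\hat\lambda\|_{\mathscr{Y}_{*}}<\|\hat\mu\|_{\mathscr{Z}_{*}}$ forces the $\mathscr{Y}$-component $\mathbf{y}_0-A\hat{\mathbf{x}}$ to vanish, producing $A\hat{\mathbf{x}}=\mathbf{y}_0$ and $\rho\|B\hat{\mathbf{x}}\|_{\mathscr{Z}}=\langle\mathbf{y}_0,\hat\lambda\rangle_{\mathscr{Y}}$. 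In the borderline case $\|\hat\lambda\|_{\mathscr{Y}_{*}}=\|\hat\mu\|_{\mathscr{Z}_{*}}$, Proposition \ref{normfcnldirsum_p=infty} gives three sub-alternatives, the first two repeating the previous conclusions and the third producing the two norming relations $(\mathbf{y}_0-A\hat{\mathbf{x}})/\|\mathbf{y}_0-A\hat{\mathbf{x}}\|_{\mathscr{Y}}$ norming for $\hat\lambda$ and $-B\hat{\mathbf{x}}/\|B\hat{\mathbf{x}}\|_{\mathscr{Z}}$ norming for $\hat\mu$ (the constant $\rho$ cancels after normalization, since it contributes only to the overall length).

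No real obstacle is expected: the only mildly delicate point is checking that Proposition \ref{normfcnldirsum_p=infty} is being applied with the roles of ``space'' and ``dual'' in the correct orientation, namely that the element being normed lives in $\mathscr{Y}_{*}\oplus_{\infty}\mathscr{Z}_{*}$ while the norming functional lives in its dual $\mathscr{Y}\oplus_{1}\mathscr{Z}$, which is precisely the hypothesis of that proposition with $\mathscr{A}:=\mathscr{Y}_{*}$, $\mathscr{B}:=\mathscr{Z}_{*}$. The rest is bookkeeping identical to the proof of Proposition \ref{norming_functional_general_regularization_p=1}, with $\mathscr{Y}^{*},\mathscr{Z}^{*}$ replaced by $\mathscr{Y}_{*},\mathscr{Z}_{*}$ throughout.
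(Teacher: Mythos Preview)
Your proposal is correct and follows essentially the same route the paper indicates: reformulate \eqref{regprob} with $p=1$ as \eqref{best_app}, invoke the pre-dual norming relation (Proposition \ref{norming_functional_predual}) in place of Proposition \ref{norming_functional_general}, and then apply Proposition \ref{normfcnldirsum_p=infty} with $\mathscr{A}:=\mathscr{Y}_*$, $\mathscr{B}:=\mathscr{Z}_*$. The paper omits the proof as being parallel to that of Proposition \ref{norming_functional_general_regularization_p=1}, and your write-up carries out exactly that parallel, including the needed check that $\mathscr{M}$ is weak$^*$ closed so that Proposition \ref{norming_functional_predual} applies.
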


%%%%%%%%%%%%%%%%%%%%%%%%%%%%%

\section{Regularized Extremal Problem in $\ell_1(\mathbb{N})$}

In this section, we illustrate the duality approach developed in the previous section with the regularized extremal problem in $\ell_1(\mathbb{N})$.

We describe the regularized extremal problem in $\ell_1(\mathbb{N})$. Let $\mathscr{X} = \mathscr{Z} = \ell_1(\mathbb{N})$, the Banach space consisting of all real sequences $\mathbf{x}:=(x_j:j\in\mathbb{N})$ such that 
$\|\mathbf{x}\|_1:=\sum_{j\in\mathbb{N}}|x_j|<+\infty$. It is known that $\ell_1(\mathbb{N})$ has $c_0(\mathbb{N})$ as its pre-dual space, where $c_0(\mathbb{N})$ denotes the Banach space of all real sequences  $\mathbf{a}:=(a_j:j\in\mathbb{N})$ converging to $0$ as $j\to\infty$, endowed with $\|\mathbf{a}\|_{\infty}:=\sup\{|a_j|: j\in\mathbb{N}\}<+\infty$. 
% The dual bilinear form $\langle\cdot,\cdot\rangle_{c_0(\mathbb{N})}$ on $c_0(\mathbb{N})\times\ell_1(\mathbb{N})$ is defined by
% $$
% \langle\mathbf{a},\mathbf{x}\rangle_{c_0(\mathbb{N})}:=\sum_{j\in\mathbb{N}}a_jx_j,\ \mbox{for all}\ \mathbf{a}:=(a_j:j\in\mathbb{N})\in c_0(\mathbb{N})\ \mbox{and all}\ \mathbf{x}:=(x_j:j\in\mathbb{N})\in\ell_1(\mathbb{N}).
% $$
For $m\in\mathbb{N}$, let $\mathscr{Y}=\mathbb{R}^m$ endowed with a norm $\|\cdot\|_{\mathbb{R}^m}$ and $\mathbf{y}_0=(y_j:j\in\mathbb{N}_m)\in\mathbb{R}^m$. For a norm $\|\cdot\|_{\mathbb{R}^m}$ on $\mathbb{R}^m$, we denote by $\|\cdot\|^*_{\mathbb{R}^m}$ its dual norm. Suppose that $\mathbf{a}_j:=(a_{j,k}:k\in\mathbb{N})$, $j\in\mathbb{N}_m,$ are a finite number of linearly independent elements of $c_0(\mathbb{N})$ and let $A$ be the semi-infinite matrix whose $m$ rows are  $\mathbf{a}_1,\mathbf{a}_2,\ldots,\mathbf{a}_m$.  It is easy to see that $A$ determines a bounded linear operator from $\ell_1(\mathbb{N})$ to $\mathbb{R}^m$.
% the operator $A:\ell^1(\mathbb{N})\rightarrow\mathbb{R}^m$ is defined by 
% $$
% A\mathbf{x}:=\left(\langle\mathbf{a}_j,\mathbf{x}\rangle_{c_0(\mathbb{N})}:j\in\mathbb{N}_m\right), \ \mbox{for all}\ \mathbf{x}\in\ell_1(\mathbb{N}).
% $$
We also choose the operator $B$ as the identity operator on $\ell_1(\mathbb{N})$. Let $p\in[1,+\infty)$ and $\rho>0$. We then consider the regularized extremal problem
\begin{equation}\label{specific_example}
\inf \big\{(\|\mathbf{y}_0-A\mathbf{x}\|^p_{\mathbb{R}^m}+\rho \|\mathbf{x}\|_{1}^p)^{1/p}:\ \mathbf{x} \in \ell_1(\mathbb{N}) \big\}.
\end{equation}

We now solve the regularized extremal problem \eqref{specific_example} 
by applying the duality approach developed in section 3. To this end, we first establish the dual problem of \eqref{specific_example} by using Theorem \ref{mainpredualthm}. It is easy to see that the operator $A$ is the adjoint operator of  $A_*:\mathbb{R}^m\rightarrow c_0(\mathbb{N})$ defined by 
\begin{equation}\label{A_*}
A_*\lambda:=\sum_{j\in\mathbb{N}_m}\lambda_j\mathbf{a}_j,\ \mbox{for all}\ \lambda:=(\lambda_j:j\in\mathbb{N}_m).
\end{equation}
Theorem \ref{mainpredualthm} ensures that the dual problem of problem \eqref{specific_example} has the form

\begin{equation}\label{specific_example_dual}
  \sup\left\{\sum_{j\in\mathbb{N}_m}y_j\lambda_j:\left\|\Bigg(\lambda,-\rho^{-1/p}\sum_{j\in\mathbb{N}_m}\lambda_j\mathbf{a}_j\Bigg)\right\|_{\mathbb{R}^m\oplus_{p'}c_0(\mathbb{N})}\leq1,\ \lambda:=(\lambda_j:j\in\mathbb{N}_m) \in \mathbb{R}^m \right\}. 
\end{equation}

We next show that the dual problem \eqref{specific_example_dual} is indeed a finite dimensional optimization problem. We first consider the case that $p=1$, $p'=+\infty$, in which the dual problem can be represented as
\begin{equation}\label{specific_example_dual_p=1}
    \sup\left\{\sum_{j\in\mathbb{N}_m}y_j\lambda_j : \|(\lambda_j:j\in\mathbb{N}_m)\|^*_{\mathbb{R}^m}\leq 1, \left\|\sum_{j\in\mathbb{N}_m}\lambda_j\mathbf{a}_j \right\|_{\infty}\leq \rho, \lambda_j \in \mathbb{R},j\in\mathbb{N}_m   \right\}.
\end{equation}
Although this optimization problem has only finitely many
parameters $\lambda_j$, $j\in\mathbb{N}_m$, a certain infinite dimensional aspect is hidden in the resulting finite dimensional problem. In fact, the constraint 
$\left\|\sum_{j\in\mathbb{N}_m}\lambda_j\mathbf{a}_j \right\|_{\infty}\leq \rho$ involves infinitely many constraints 
$\left|\sum_{j\in\mathbb{N}_m}\lambda_ja_{j,k}\right|\leq \rho, \ k\in\mathbb{N}.$
To overcome this obstacle, we define for each $k\in\mathbb{N}$
\begin{equation}\label{uk}
U_k:=\left\{\lambda:=(\lambda_j:j\in\mathbb{N}_m)\in\mathbb{R}^m:-\rho\leq\sum_{j\in\mathbb{N}_m}\lambda_ja_{j,k}\leq \rho\right\},
\end{equation}
and set $U:=\bigcap_{k\in\mathbb{N}} U_k$. It has been proved in \cite{CX1} that the set $U$ is the intersection of finitely many of the regions $U_k$. That is to say, the dual problem \eqref{specific_example_dual_p=1} is an optimization problem with finitely many constraints. Hence, we can obtain a solution of \eqref{specific_example_dual_p=1} by using standard numerical methods. 
Likewise, when $p,p'\in(1,+\infty)$, the dual problem \begin{equation}\label{specific_example_dual_p>1}
  \sup\left\{\sum_{j\in\mathbb{N}_m}y_j\lambda_j:( \|(\lambda_j:j\in\mathbb{N}_m)\|^*_{\mathbb{R}^m})^{p'}+\rho^{1-p'}\left\|\sum_{j\in\mathbb{N}_m}\lambda_j\mathbf{a}_j\right\|^{p'}_{\infty}\leq1,\  \lambda_j \in \mathbb{R},j\in\mathbb{N}_m \right\}
\end{equation}
also has infinitely many constraints 
$( \|(\lambda_j:j\in\mathbb{N}_m)\|^*_{\mathbb{R}^m})^{p'}+\rho^{1-p'}\left|\sum_{j\in\mathbb{N}_m}\lambda_ja_{j,k}\right|^{p'}\leq1, \ k\in\mathbb{N}.$
However, by similar arguments as in  \cite{CX1}, we can reduce the above constraints to finite number of constraints. Accordingly, the dual problem \eqref{specific_example_dual_p>1} can also be solved by numerical methods. 

Finally, we consider solving the original regularized extremal problem \eqref{specific_example}. According to the relation between the solutions of the regularized extremal problem \eqref{specific_example} and its dual problem \eqref{specific_example_dual}, we can obtain the solution of \eqref{specific_example} by solving an equivalent finite dimensional optimization problem. To see this, we introduce some notation. For each $\mathbf{c}:=(c_j:j\in\mathbb{N})\in c_0(\mathbb{N})$, we denote by $\mathbb{N}(\mathbf{c})$ the index set on which the sequence $\mathbf{c}$ achieves its supremum norm $\|\mathbf{c}\|_\infty$, that is,
$\mathbb{N}(\mathbf{c}):=\left\{j\in\mathbb{N}:|c_j|=\|\mathbf{c}\|_\infty\right\}$. It follows from  $\lim_{j\rightarrow+\infty}c_j=0$ that the cardinality of index set $\mathbb{N}(\mathbf{c})$, denoted by $n_{\mathbf{c}}$, is finite. It has been proved in \cite{CX1} that there holds for any norming functional $\mathbf{x}\in\ell_1(\mathbb{N})$ of $\mathbf{c}\in c_0(\mathbb{N})\setminus\{0\}$ that $\supp(\mathbf{x})\subseteq\mathbb{N}(\mathbf{c})$. Here, the support $\supp(\mathbf{x})$ of $\mathbf{x}$ is defined to be  the index set on which $\mathbf{x}$ is nonzero. For $\mathbf{c}\in c_0(\mathbb{N})$ with $\mathbb{N}(\mathbf{c}):=\{k_j\in\mathbb{N}:j\in\mathbb{N}_{n_{\mathbf{c}}}\}$, we truncate the semi-infinite matrix $A$ to obtain a matrix $A_{\mathbf{c}}:=[h_{ij}:i\in\mathbb{N}_m, j\in\mathbb{N}_{n_{\mathbf{c}}}]\in\mathbb{R}^{m\times n_{\mathbf{c}}}$ by 
\begin{equation}\label{trancate_matrix}
    h_{ij}:=a_{i,k_j}, \quad i\in\mathbb{N}_n,\ j\in\mathbb{N}_{n_{\mathbf{c}}}.
\end{equation}

\begin{proposition}\label{solution_specific_example2}
   Suppose that $\mathbf{a}_j$, $j\in\mathbb{N}_m,$ are a finite number of linearly independent elements of $c_0(\mathbb{N})$, $A$ is the infinite matrix whose $m$ rows are $\mathbf{a}_j$, $j\in\mathbb{N}_m,$ $\mathbf{y}_0=(y_j:j\in\mathbb{N}_m)\in\mathbb{R}^m$, $\rho>0$ and $p\in[1,+\infty)$. Let $\hat\lambda$ be a solution of the dual problem \eqref{specific_example_dual}, $\hat\mu:=A_*\hat\lambda$ and $\mathbb{N}(\hat\mu):=\{k_j\in\mathbb{N}:j\in\mathbb{N}_{n_{\hat\mu}}\}$. If $\hat{\mathbf{z}}:=(\hat{z}_j:j\in\mathbb{N}_{n_{\hat\mu}})$ is a solution of the optimization problem 
     \begin{equation}\label{specific_example_finite}
     \inf \big\{(\|\mathbf{y}_0-A_{\hat\mu}\mathbf{z}\|^p_{\mathbb{R}^m}+\rho \|\mathbf{z}\|^p_{1})^{1/p}:\ \mathbf{z} \in \mathbb{R}^{n_{\hat\mu}} \big\}, 
     \end{equation}
     then $\hat{\mathbf{x}}:=(\hat{x}_j:j\in\mathbb{N})$ with $\hat{x}_{k_j}:=\hat{z}_j$, $j\in\mathbb{N}_{n_{\hat\mu}}$, and $\hat{x}_j:=0$, $j\notin\mathbb{N}(\hat\mu)$, is a solution of \eqref{specific_example}.
     
\end{proposition}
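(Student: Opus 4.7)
The plan is to prove that every solution of \eqref{specific_example} must be supported on the finite index set $\mathbb{N}(\hat\mu)$, reducing \eqref{specific_example} to \eqref{specific_example_finite}. Existence of a solution $\tilde{\mathbf{x}} \in \ell_1(\mathbb{N})$ comes from Corollary \ref{existence}, applied with pre-duals $\mathscr{X}_* = \mathscr{Z}_* = c_0(\mathbb{N})$ and $\mathscr{Y}_* = \mathbb{R}^m$; the hypothesis $\range B = \mathscr{Z}$ is immediate because $B$ is the identity.

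The heart of the argument is a support restriction. Set $\tilde\mu := -\rho^{-1/p}A_*\hat\lambda \in c_0(\mathbb{N})$, so that $(\hat\lambda,\tilde\mu)$ is the dual pair satisfying $A_*\hat\lambda + \rho^{1/p}B_*\tilde\mu = 0$ (with $B_*$ the identity on $c_0(\mathbb{N})$), and in particular $\mathbb{N}(\tilde\mu) = \mathbb{N}(\hat\mu)$. I then apply Proposition \ref{norming_functional_predual_regularization_p>1} when $p \in (1,+\infty)$, or Proposition \ref{norming_functional_predual_regularization_p=1} when $p = 1$, to $\tilde{\mathbf{x}}$ and $(\hat\lambda,\tilde\mu)$. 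In each nontrivial subcase one obtains, either directly from the proposition's statement or by tracing back through its proof (which invokes Proposition \ref{normfcnldirsum} or Proposition \ref{normfcnldirsum_p=infty} on the norming pair $(\mathbf{y}_0 - A\tilde{\mathbf{x}}, -\rho^{1/p}B\tilde{\mathbf{x}})$ of $(\hat\lambda,\tilde\mu)$), a nonzero scalar multiple of $B\tilde{\mathbf{x}} = \tilde{\mathbf{x}}$ that is a norming element in $\ell_1(\mathbb{N})$ for $\tilde\mu \in c_0(\mathbb{N})$. Combined with the quoted fact from \cite{CX1} that any such norming functional has support contained in $\mathbb{N}(\tilde\mu)$, this forces $\supp(\tilde{\mathbf{x}}) \subseteq \mathbb{N}(\tilde\mu) = \mathbb{N}(\hat\mu)$; in the remaining subcases $\tilde{\mathbf{x}} = 0$ and the inclusion is trivial.

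With the support restriction in hand, identify the subspace of $\ell_1(\mathbb{N})$ consisting of sequences supported in $\mathbb{N}(\hat\mu)$ with $\mathbb{R}^{n_{\hat\mu}}$ via $x_{k_j} = z_j$ for $j \in \mathbb{N}_{n_{\hat\mu}}$. Definition \eqref{trancate_matrix} gives $A\mathbf{x} = A_{\hat\mu}\mathbf{z}$, and clearly $\|\mathbf{x}\|_1 = \|\mathbf{z}\|_1$, so the objectives of \eqref{specific_example} and \eqref{specific_example_finite} agree under this correspondence. The two infima therefore coincide and are attained at matching points, so a solution $\hat{\mathbf{z}}$ of \eqref{specific_example_finite} lifts to a solution $\hat{\mathbf{x}}$ of \eqref{specific_example} exactly as described in the statement.

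The main obstacle is the case analysis in the support step, especially for $p = 1$: the streamlined statement of Proposition \ref{norming_functional_predual_regularization_p=1} in the subcase $\|\hat\lambda\|_{\mathscr{Y}_*} < \|\tilde\mu\|_{\mathscr{Z}_*}$ records only $A\tilde{\mathbf{x}} = \mathbf{y}_0$ and $\rho\|B\tilde{\mathbf{x}}\|_{\mathscr{Z}} = \langle \mathbf{y}_0, \hat\lambda\rangle_{\mathscr{Y}}$, without the norming relationship we need. Extracting the norming information in that subcase requires reopening the proof of the proposition and applying Proposition \ref{normfcnldirsum_p=infty} directly to the underlying pair rather than using the proposition as a black box.
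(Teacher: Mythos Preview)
Your proposal is correct and follows essentially the same route as the paper: show that any solution of \eqref{specific_example} has support contained in $\mathbb{N}(\hat\mu)$ via the norming relationship with the dual solution (using the characterization from \cite{CX1}), then identify the restricted problem with the finite-dimensional problem \eqref{specific_example_finite}. The paper's proof is terser---it asserts in one line that Propositions \ref{norming_functional_predual_regularization_p>1} and \ref{norming_functional_predual_regularization_p=1} yield ``$\hat{\mathbf{x}}=0$ or $-\hat{\mathbf{x}}/\|\hat{\mathbf{x}}\|$ is norming for $-\rho^{-1/p}\hat\mu$''---whereas you correctly flag that in the subcase $\|\hat\lambda\|_{\mathscr{Y}_*}<\|\tilde\mu\|_{\mathscr{Z}_*}$ of Proposition \ref{norming_functional_predual_regularization_p=1} the norming conclusion for $B\hat{\mathbf{x}}$ is not part of the proposition's statement and must be recovered from Proposition \ref{normfcnldirsum_p=infty}; the paper glosses over this, so your version is in fact more careful on that point.
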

\begin{proof}
By employing Propositions \ref{norming_functional_predual_regularization_p>1} and \ref{norming_functional_predual_regularization_p=1}, we have that for any solution $\hat{\mathbf{x}}$ of \eqref{specific_example}, there holds that $\hat{\mathbf{x}}=0$ or $-\hat{\mathbf{x}}/\|\hat{\mathbf{x}}\|_{\ell_1(\mathbb{N})}$ is norming for $-\rho^{-1/p}\hat\mu$. Note that $\supp(\hat{\mathbf{x}})\subseteq\mathbb{N}(\mathbf{\hat\mu})$ holds in both cases. 
Hence, we rewrite \eqref{specific_example} as an equivalent form \begin{equation}\label{specific_example_finite1}
\inf \big\{(\|\mathbf{y}_0-A\mathbf{x}\|^p_{\mathbb{R}^m}+\rho \|\mathbf{x}\|^p_{1})^{1/p}:\ \supp(\mathbf{x})\subseteq\mathbb{N}(\mathbf{\hat\mu}),\  \mathbf{x} \in \ell_1(\mathbb{N}) \big\}.
\end{equation}
By use of the matrix $A_{\hat\mu}$, the optimization problem \eqref{specific_example_finite1} can be further represented by \eqref{specific_example_finite}. Moreover, we can obtain a solution $\hat{\mathbf{x}}:=(\hat{x}_j:j\in\mathbb{N})$ of  \eqref{specific_example_finite1} through augmenting a solution $\hat{\mathbf{z}}:=(\hat{z}_j:j\in\mathbb{N}_{n_{\hat\mu}})$ of \eqref{specific_example_finite} by setting $\hat{x}_{k_j}:=\hat{z}_j$, $j\in\mathbb{N}_{n_{\hat\mu}}$, and $\hat{x}_j:=0$, $j\notin\mathbb{N}(\hat\mu)$.
\end{proof}

Based on Propositions \ref{norming_functional_predual_regularization_p=1} and  \ref{solution_specific_example2}, we develop a scheme for finding a solution of the infinite dimensional regularized extremal problem \eqref{specific_example} with $p=1$. 
\begin{itemize}
\setlength{\itemsep}{2pt}
    \item 1: Solve the dual problem \eqref{specific_example_dual_p=1} and obtain a solution  $\hat\lambda$. 
    
    \item 2: Compute $\rho\|\hat\lambda\|_{\mathbb{R}^m}$ and $\|A_*\hat\lambda\|_{\infty}$. 
    
    \item 3: If $\rho\|\hat\lambda\|_{\mathbb{R}^m}>\|A_*\hat\lambda\|_{\infty}$, obtain a solution of \eqref{specific_example} by $\hat{\mathbf{x}}:=0$. 
    
    \item 4: If $\rho\|\hat\lambda\|_{\mathbb{R}^m}\leq\|A_*\hat\lambda\|_{\infty}$, determine the index set $\mathbb{N}(\hat\mu):=\{k_j\in\mathbb{N}:j\in\mathbb{N}_{n_{\hat\mu}}\}$ and generate the matrix $A_{\hat\mu}$ with  $\hat\mu:=A_*\hat\lambda$. Solve the finite dimensional optimization problem 
    \eqref{specific_example_finite} and obtain a solution $\hat{\mathbf{z}}$.
    
    \item 5: Obtain a solution of \eqref{specific_example} by setting $\hat{\mathbf{x}}:=(\hat{x}_j:j\in\mathbb{N})$ with $\hat{x}_{k_j}:=\hat{z}_j$, $j\in\mathbb{N}_{n_{\hat\mu}}$, and $\hat{x}_j:=0$, $j\notin\mathbb{N}(\hat\mu)$.
\end{itemize}

Propositions \ref{norming_functional_predual_regularization_p>1} and  \ref{solution_specific_example2} may also provide a scheme for solving the regularized extremal problem \eqref{specific_example} with $p\in(1,+\infty)$. 

\begin{itemize}
\setlength{\itemsep}{2pt}
    \item 1: Solve the dual problem \eqref{specific_example_dual_p>1} and obtain a solution  $\hat\lambda$. 
    
    \item 2: Compute  $\hat\mu:=A_*\hat\lambda$. 
    
    \item 3: If $\hat\mu=0$, obtain a solution of \eqref{specific_example} by $\hat{\mathbf{x}}:=0$. 
    
    \item 4: If $\hat\mu\neq0$, determine the index set $\mathbb{N}(\hat\mu):=\{k_j\in\mathbb{N}:j\in\mathbb{N}_{n_{\hat\mu}}\}$ and generate the matrix $A_{\hat\mu}$. Solve the finite dimensional optimization problem 
    \eqref{specific_example_finite} and obtain a solution $\hat{\mathbf{z}}$.
    
    \item 5: Obtain a solution of \eqref{specific_example} by setting $\hat{\mathbf{x}}:=(\hat{x}_j:j\in\mathbb{N})$ with $\hat{x}_{k_j}:=\hat{z}_j$, $j\in\mathbb{N}_{n_{\hat\mu}}$, and $\hat{x}_j:=0$, $j\notin\mathbb{N}(\hat\mu)$.
\end{itemize}

%%%%%%%%%%%%%%%%%%%%%%%%%%%%%%%%%%%
\section{Numerical Experiments}
%%%%%%%%%%%%%%%%%%%%%%%%%%%%%%%%%%%%
This section is devoted to the presentation of three numerical experiments which illustrate the feasibility of the dual approach developed in this paper. In the first experiment, we consider a small size problem to illustrate every key step of the proposed approach, and in the last two experiments, we consider  relatively large size problems to demonstrate the effectiveness of the method. All the experiments are performed with Matlab R2018a on an Intel Core I5 (8-core) with 1.80 GHz and 8 Gb RAM.

We consider solving the regularized extremal problem  
\begin{equation}\label{numerical}
\inf \big\{\|\mathbf{y}_0-A\mathbf{x}\|_{1}+\rho \|\mathbf{x}\|_{1}:\ \mathbf{x} \in \ell_1(\mathbb{N}) \big\}.
\end{equation} 
Note that the space $\ell_1(\mathbb{N})$ is of infinite dimension and thus, numerical methods used in compress-sensing or image processing are not directly applicable to problem \eqref{numerical}. We will employ the duality approach developed in this paper to solve problem \eqref{numerical}.
We first describe the choice of the data $\mathbf{y}_0$ and the semi-infinite matrix $A$ to be used in the experiments. Let $m$ be a positive even integer. We choose $A$ as a semi-infinite matrix whose rows $\mathbf{a}_j:=(a_{j,k}:k\in\mathbb{N})$, $j\in\mathbb{N}_m$, are $m$ linearly independent elements of $c_0(\mathbb{N})$, defined by  
\begin{equation*}\label{linear_independence_a}
a_{j,k}:=\frac{\cos(jk)}{k}, \  a_{j+m/2,k}:=\frac{\sin(jk)}{k},\ \mbox{for all}\ j\in\mathbb{N}_{m/2}\ \mbox{and all}\ k\in\mathbb{N}.
\end{equation*} 
We take $\mathbf{x}_0:=(x_k:k\in\mathbb{N})\in\ell_1(\mathbb{N})$ with $x_k=\frac{1}{10k^2}$, $k\in\mathbb{N}$ and generate the noise free data $\mathbf{y}:=A\mathbf{x}_0$. The noisy data is modeled as $\mathbf{y}_0:= \mathbf{y} + \eta$,
where $\eta$ is the Gaussian noise with the standard deviation $\sigma = 1.0 \times 10^{-3}(\mathrm{max} \mathbf{y} - \mathrm{min} \mathbf{y})$. 

Following the scheme described in section 4, we need to solve the dual problem 
\begin{equation}\label{numerical_dual}
    \sup\left\{\sum_{j\in\mathbb{N}_m}y_j\lambda_j : \|(\lambda_j:j\in\mathbb{N}_m)\|_{\infty}\leq 1, \left\|\sum_{j\in\mathbb{N}_m}\lambda_j\mathbf{a}_j \right\|_{\infty}\leq \rho%, \lambda_j \in \mathbb{R},j\in\mathbb{N}_m   
    \right\}.
\end{equation}
Note that the constraint $\left\|\sum_{j\in\mathbb{N}_m} \lambda_j\mathbf{a}_j \right\|_{\infty}\leq \rho$ of problem \eqref{numerical_dual} involves $m$ infinite dimensional 
vectors $\mathbf{a}_j$. It is equivalent to the constraints
$\left|\sum_{j\in\mathbb{N}_m}\lambda_ja_{j,k}\right|\leq \rho$, 
$k\in\mathbb{N}$, which defines a polytope in $\mathbb{R}^m$.  
%It is pointed out in section 4 that the polytope is in fact determined by only a finite number of these constraints. We need to determine this number.
To describe the polytope precisely, we define the sets $U_k$, $k\in\mathbb{N}$ as in \eqref{uk}. It is known \cite{CX1} that the set $U:=\bigcap_{k\in\mathbb{N}} U_k$ is the intersection of finitely many of the regions $U_k$, that is,  $U=\bigcap_{k\in\mathbb{N}_{n_0}} U_k$ for some $n_0\in\mathbb{N}$. We need to identify the number $n_0$. 
To this end, we propose two ideas. When $m$ is small, 
we set for each $n\in\mathbb{N}$,  
$\widetilde{U}_n:=\bigcap_{k\in\mathbb{N}_n} U_k$ and compute the vertices of the regions $\widetilde{U}_n$, $n\in\mathbb{N}$, in the increasing order of $n$. 
The computation stops at $n_0\in\mathbb{N}$ when $\widetilde{U}_{n_0}$ and $\widetilde{U}_{n_0+1}$ have the same vertices. Note that the number of the vertices  increases exponentially as the dimension $m$ of the space $\mathbb{R}^m$ increases. Hence, when $m$ is large, this idea is not practical.
In this case, for $l\in \mathbb{N}$ we may instead define
\begin{align*}
S(l):= \sup\left\{\sum_{j\in\mathbb{N}_m}y_j\lambda_j: |\lambda_j|\leq 1, j\in \mathbb{N}_m,  \left|\sum_{j\in\mathbb{N}_m}\lambda_ja_{j,k}\right|\leq \rho, k\in\mathbb{N}_l
    \right\}
\end{align*}
and empirically choose $n_0\in \mathbb{N}$ such that $S(n_0)=S(n_0+1)$. Computing $S(l)$ may be done by using linear programming software. In the following three numerical experiments we use these two ideas respectively.

In the first experiment, we solve problem \eqref{numerical} with $m=12$. In this case, we identify the number of the effective constraints in the dual problem by computing the vertices of the regions $\widetilde{U}_n$, $n\in\mathbb{N}$, in the increasing order of $n$.  Due to the large number of the vertices we do not report all of them. Instead, we report in Table \ref{Number_vertices} the selected values of $n$ and the number $\mathcal{V}_n$ of the vertices of $\widetilde{U}_n$ when $\rho=1$. The numerical results 
show that $n_0=19$ for this case. In our experiment, we begin the computation with an initial integer equal to 12. Note that associated with different parameters $\rho>0$, the polytopes are similar, and thus, the values of $n_0$ for the polytopes are the same. 
With the number  $n_0=19$, we now rewrite the dual problem \eqref{numerical_dual} as 
\begin{equation}\label{numerical_dual_finite}
    \sup\left\{\sum_{j\in\mathbb{N}_{12}}y_j\lambda_j : |\lambda_j|\leq 1,\ j\in\mathbb{N}_{12}, \left|\sum_{j\in\mathbb{N}_{12}}\lambda_ja_{j,k}\right|\leq \rho, k\in\mathbb{N}_{19}   \right\}.
\end{equation}
Problem \eqref{numerical_dual_finite} is a typical linear programming problem. We then solve  \eqref{numerical_dual_finite} by using the function ``linprog'' available in Matlab to obtain its numerical solution $\hat\lambda$ and the supremum $S$ of \eqref{numerical_dual_finite}.

\begin{table}[ht]
\caption{The number of the vertices}
\vspace*{0.5cm}
\centering
\setlength{\tabcolsep}{2mm}
\begin{tabular}{cc|c|c|c|c|c|c|c|cc}  
\hline\hline
&$n$ &14  &16  &18 &19 &20 &21 &25 &30 \\ \hline
&$\mathcal{V}_n$ &$10,256$ &$21,070$ &$44,134$ &$59,930$ &$59,930$ &$59,930$ &$59,930$ &$59,930$\\ 
\hline\hline
\end{tabular}
\label{Number_vertices}
\end{table}

\begin{table}[ht]
\caption{Numerical results for regularized extremal problem with $m=12$.}
\vspace*{0.5cm}
\centering
\setlength{\tabcolsep}{2mm}
\begin{tabular}{lc|c|c|c|c|c|c|c}
\hline\hline
&$\rho$ &$\rho\|\hat{\lambda}\|_{\infty}$  &$\|\mathbf{A}_{*}\hat{\lambda}\|_{\infty}$  &$S$ &$f_r$ &$\mathrm{SL}$  &$\mathrm{ERR}$  &$\|\mathbf{y}-A\hat{\mathbf{x}}\|_2$ \\ \hline
&12.0000 &12.0000 &7.7309 &0.7637 &0.7637 &0  &0.1039 &0.2451\\
\hline
&10.0000 &10.0000 &7.7309 &0.7637 &0.7637 &0 &0.1039 &0.2451\\
\hline
&8.0000 &8.0000 &7.7309 &0.7637 &0.7637 &0 &0.1039 &0.2451 \\
\hline
&7.0000 &7.0000 &7.0000 &0.7180  &0.7180 &1 &0.0336   &0.0525 \\
\hline
&5.0000 &5.0000 &5.0000 &0.5498 &0.5498 &1  &0.0316   &0.0449  \\
\hline
&1.0000 &1.0000 &1.0000 &0.1407 &0.1411 &3   &0.0103   &0.0070  \\
\hline
&0.8000 &0.8000 &0.8000 &0.1145 &0.1147 &4 &0.0076 &0.0044 \\
\hline
&0.5000 &0.5000 &0.5000 &0.0736 &0.0737 &5  &0.0055 &0.0025 \\
\hline
&0.3000 &0.3000 &0.3000 &0.0455 &0.0458 &6  &0.0049 &0.0021 \\
\hline
&0.2500 &0.2500 &0.2500 &0.0384 &0.0387 &7  &0.0051  &0.0020 \\
\hline
&0.2000 &0.2000 &0.2000 &0.0310 &0.0310 &9 &0.0031 &7.2907e-4 \\
\hline
&0.1800 &0.1800 &0.1800 &0.0279 &0.0279 &10 &0.0030 &7.4563e-4 \\
\hline
&0.1000 &0.0660 &0.1000 &0.0155 &0.0155 &12  &0.0032 &5.4190e-4 \\
\hline
&0.0100 &6.6010e-4 &0.0100 &0.0016 &0.0016 &12  &0.0032 &5.4190e-4\\
\hline
&0.0010 &6.6010e-6 &0.0010 &1.5550e-4 &1.5550e-4 &12
&0.0032 &5.4190e-4 \\
\hline\hline
\end{tabular}
\label{Regularization_problem_m12}
\end{table}

\begin{table}[ht]
\caption{Numerical results for regularized extremal problem with $m=200$.}
\vspace*{0.5cm}
\centering
\setlength{\tabcolsep}{2mm}
\begin{tabular}{lc|c|c|c|c|c|c|c}
\hline\hline
&$\rho$ &$\rho\|\hat{\lambda}\|_{\infty}$  &$\|\mathbf{A}_{*}\hat{\lambda}\|_{\infty}$  &$S$ &$f_r$ &$\mathrm{SL}$  &$\mathrm{ERR}$  &$\|\mathbf{y}-A\hat{\mathbf{x}}\|_2$ \\ \hline
&132.0000 &132.0000 &127.0107 &12.8027 &12.8027 &0  &0.1040 &1.0075\\
\hline
&130.0000 &130.0000 &127.0107 &12.8027 &12.8027 &0  &0.1040  &1.0075 \\
\hline
&128.0000 &128.0000 &127.0107 &12.8027 &12.8027 &0  &0.1040  &1.0075 \\
\hline
&127.0000 &127.0000 &127.0000 &12.8026  &12.8026 &1 &0.0915   &0.8778 \\
\hline
&100.0000 &100.0000 &100.0000 &10.7137 &10.7137 &1  &0.0324   &0.1992  \\
\hline
&80.0000 &80.0000 &80.0000 &8.9762 &8.9762 &1   &0.0306   &0.1683  \\
\hline
&50.0000 &50.0000 &50.0000 &6.2516 &6.2516 &2 &0.0212 &0.0979 \\
\hline
&10.0000 &10.0000 &10.0000 &1.4854 &1.4855 &5  &0.0064 &0.0135 \\
\hline
&1.0000 &1.0000 &1.0000 &0.1891 &0.1894 &12  &0.0019  &0.0020 \\
\hline
&0.1000 &0.1000 &0.1000 &0.0413 &0.0453 &36  &0.0077 &0.0019 \\
\hline
&0.0500 &0.0500 &0.0500 &0.0272 &0.0331 &67 &0.0188 &0.0025 \\
\hline
&0.0300 &0.0300 &0.0300 &0.0177 &0.0239 &132  &0.0410 &0.0030 \\
\hline
&0.0100 &0.0050 &0.0100 &0.0060 &0.0060 &200  &0.0453  &0.0031 \\
\hline
&0.0010 &5.0431e-5 &0.0010 &5.9779e-4 &5.9779e-4 &200
&0.0453 &0.0031 \\
\hline
&0.0001 &5.0431e-7 &0.0001 &5.9779e-5 &5.9779e-5 &200
&0.0453  &0.0031\\
\hline\hline
\end{tabular}
\label{Regularization_problem_m200}
\end{table}

With a dual solution $\hat\lambda$ at hand, we then solve the original 
problem \eqref{numerical} according to Propositions \ref{norming_functional_predual_regularization_p=1} and \ref{solution_specific_example2}. To this end, we compute 
$\rho\|\hat\lambda\|_{\infty}$ and $\|A_*\hat\lambda\|_{\infty}$ with $A_*$ being defined by \eqref{A_*}, and 
compare their values. 
If $\rho\|\hat\lambda\|_{\infty} > \|A_*\hat\lambda\|_{\infty}$, 
we take  $\hat{\mathbf{x}}=0$ as the solution of \eqref{numerical} 
by (1) of Proposition \ref{norming_functional_predual_regularization_p=1}. 
% If $\rho\|\hat\lambda\|_{\infty} < \|A_*\hat\lambda\|_{\infty}$, we solve the minimum norm interpolation problem \eqref{MNI} by the duality approach provided in \cite{CX1} and take the resulting solution as a solution of \eqref{numerical}. 
If $\rho\|\hat\lambda\|_{\infty} \leq \|A_*\hat\lambda\|_{\infty}$, we identify the index set $\mathbb{N}(\hat\mu)$ of $\hat\mu:=A_*\hat\lambda$, on which the sequence $\hat\mu$ achieves its supremum norm, the cardinality $n_{\hat\mu}$ of  $\mathbb{N}(\hat\mu)$, and generate the matrix $A_{\hat\mu}$ defined by \eqref{trancate_matrix} with $\mathbf{c}$ being replaced by $\hat\mu$. According to Proposition \ref{solution_specific_example2}, we solve the finite dimensional optimization problem \begin{equation}\label{numerical_finite}
     \inf \big\{\|\mathbf{y}_0-A_{\hat\mu}\mathbf{z}\|_{1}+\rho \|\mathbf{z}\|_{1}:\ \mathbf{z} \in \mathbb{R}^{n_{\hat\mu}} \big\}, 
     \end{equation}
by employing the FPPA originally developed in \cite{LSXZ, MSX}. We describe the FPPA as follows: Let $f:\mathbb{R}^d\to  \mathbb{R}\cup\{+\infty\}$ be a convex function such that $\mathrm{dom}(f):=\{\mathbf{w}\in\mathbb{R}^d:f(\mathbf{w})<+\infty\}\neq{\emptyset}.$ The proximity operator $\prox_{f}:\mathbb{R}^d\to\mathbb{R}^d$ of a convex function $f$ is defined for $\mathbf{w}\in\mathbb{R}^d$ by
\begin{equation*}
\prox_{f}(\mathbf{w}):=\argmin\left\{\frac{1}{2}\|\mathbf{u}-\mathbf{w}\|_2^2+f(\mathbf{u}):\mathbf{u}\in\mathbb{R}^d\right\}.
\end{equation*}
Set $\varphi:=\rho\|\cdot\|_1$ and $\psi:=\|\mathbf{y}_0-\cdot\|_1$. 
By choosing positive constants $\beta$, $\gamma$ and initial points $\mathbf{z}^0\in\mathbb{R}^{n_{\mu}}$, $\mathbf{v}^0\in\mathbb{R}^m$, we solve \eqref{numerical_finite} by the FPPA
\begin{equation}\label{FPPA}
\left\{\begin{array}{l}
\mathbf{z}^{k+1}=\operatorname{prox}_{\beta\varphi}\left(\mathbf{z}^{k}-\beta A_{\hat\mu}^{\top} \mathbf{v}^{k}\right), \\
\mathbf{v}^{k+1}=\gamma\left(\mathcal{I}-\operatorname{prox}_{\frac{1}{\gamma}\psi}\right)\left(\frac{1}{\gamma} \mathbf{v}^{k}+A_{\hat\mu}\left(2 \mathbf{z}^{k+1}-\mathbf{z}^{k}\right)\right),
\end{array}\right.
\end{equation}
to obtain a numerical solution $\hat{\mathbf{z}}$. In Algorithm \eqref{FPPA}, 
parameters $\beta$ and $\gamma$ are chosen so that the algorithm converges, 
and $\operatorname{prox}_{\beta\varphi}$ and 
$\operatorname{prox}_{\frac{1}{\gamma}\psi}$ have closed-forms that we 
present below.
The proximity operator $\prox_{\beta\varphi}$ at $\mathbf{w}:=(w_j:j\in\mathbb{N}_{n_{\hat\mu}})\in\mathbb{R}^{n_{\hat\mu}}$ has the form $
\prox_{\beta\varphi}(\mathbf{w}):=(u_j:j\in\mathbb{N}_{n_{\hat\mu}}), 
$
where for all $j\in\mathbb{N}_{n_{\hat\mu}}$
\begin{equation*}
u_j:=
\left\{\begin{array}{ll}
w_j-\beta\rho, \ & \mbox{if}\ w_j>\beta\rho, \\
w_j+\beta\rho, \  & \mbox{if}\ w_j<-\beta\rho ,\\
0, \ & \mbox{if}\ w_j\in[-\beta\rho,\beta\rho].
\end{array}\right.
\end{equation*}
Likewise, the proximity operator $\prox_{\frac{1}{\gamma}\psi}$ at $\mathbf{w}:=(w_j:j\in\mathbb{N}_{m})\in\mathbb{R}^{m}$ has the form
$\prox_{\frac{1}{\gamma}\psi}(\mathbf{w}):=(u_j:j\in\mathbb{N}_{m})$, where for all $j\in\mathbb{N}_{m}$
\begin{equation*}
u_j:=
\left\{\begin{array}{ll}
w_j-{1/\gamma}, \ & \mbox{if}\ w_j>y_j+{1/\gamma}, \\
w_j+{1/\gamma}, \  & \mbox{if}\ w_j<y_j-{1/\gamma},\\
y_j, \ & \mbox{if}\ w_j\in[y_j-{1/\gamma},y_j+{1/\gamma}].
\end{array}\right.
\end{equation*}
Algorithm FPPA generates a numerical solution $\hat{\mathbf{z}}$ of \eqref{numerical_finite}, with which a numerical solution $\hat{\mathbf{x}}$ of the original problem \eqref{numerical} is obtained by augmenting $\hat{\mathbf{z}}$ as described in Proposition \ref{solution_specific_example2}. 

\begin{table}[ht]
\caption{Numerical results for regularized extremal problem with $m=600$.}
\vspace*{0.5cm}
\centering
\setlength{\tabcolsep}{2mm}
\begin{tabular}{lc|c|c|c|c|c|c|c}
\hline\hline
&$\rho$ &$\rho\|\hat{\lambda}\|_{\infty}$  &$\|\mathbf{A}_{*}\hat{\lambda}\|_{\infty}$  &$S$ &$f_r$ &$\mathrm{SL}$  &$\mathrm{ERR}$  &$\|\mathbf{y}_0-A\hat{\mathbf{x}}\|_2$ \\ \hline
&385.0000 &385.0000 &380.3394 &38.3914 &38.3914 &0  &0.1040 &1.7456\\
\hline
&382.0000 &382.0000 &380.3394 &38.3914 &38.3914 &0  &0.1040 &1.7456 \\
\hline
&381.0000 &381.0000 &380.3394 &38.3914 &38.3914 &0  &0.1040 &1.7456 \\
\hline
&380.0000 &380.0000 &380.0000 &38.3864 &38.3864 &1  &0.0841  &1.3868 \\
\hline
&300.0000 &300.0000 &300.0000 &32.1429 &32.1429 &1  &0.0323  &0.3412  \\
\hline
&250.0000 &250.0000 &250.0000 &27.8101 &27.8101 &1   &0.0311 &0.3070  \\
\hline
&150.0000 &150.0000 &150.0000 &18.7604 &18.7604 &2 &0.0216 &0.1761 \\
\hline
&50.0000 &50.0000 &50.0000 &7.1210 &7.1210 &3  &0.0105 &0.0509 \\
\hline
&10.0000 &10.0000 &10.0000 &1.6264 &1.6265 &7  &0.0033 &0.0075 \\
\hline
&1.0000 &1.0000 &1.0000 &0.2608 &0.2616 &17  &0.0012  &0.0014 \\
\hline
&0.1000 &0.1000 &0.1000 &0.1071 &0.1151 &69 &0.0121 &0.0025 \\
\hline
&0.0500 &0.0500 &0.0500 &0.0837 &0.0988 &150  &0.0369 &0.0036\\
\hline
&0.0200 &0.0200 &0.0200 &0.0413 &0.0487 &515  &0.1132 &0.0053 \\
\hline
&0.0100 &0.0073 &0.0100 &0.0208 &0.0208 &600  &0.1146  &0.0054 \\
\hline
&0.0010 &7.3254e-5 &0.0010 &0.0021 &0.0021 &600
&0.1146 &0.0054 \\
\hline
&0.0001 &7.3254e-7 &0.0001 &2.0760e-4 &2.0801e-4 &600
&0.1146  &0.0054\\
\hline\hline
\end{tabular}
\label{Regularization_problem_m600}
\end{table}

For convenience, we use $f_{r}$ to denote the value of the objective function $f:=\|\mathbf{y}_0-A(\cdot)\|_{1}+\rho \|\cdot\|_{1}$ at the numerical solution $\hat{\mathbf{x}}$ of \eqref{numerical}, which is a computed infimum of \eqref{numerical}. We define  $\mathrm{ERR}:=\|\hat{\mathbf{x}}-\mathbf{x}^\dag\|_2$, where $\mathbf{x}^\dag=\argmin\{\|\mathbf{x}\|_1:A\mathbf{x}=\mathbf{y}, \mathbf{x}\in\ell_1(\mathbb{N})\}$. Here, we solve the minimum norm interpolation problem by the duality approach developed in \cite{CX1}. In Table \ref{Regularization_problem_m12} we report the selected values of $\rho$, the corresponding values of $\rho\|\hat\lambda\|_{\infty}$, $\|A_{*}\hat\lambda\|_{\infty}$, $S$, $f_{r}$, ERR, $\|\mathbf{y}-A\hat{\mathbf{x}}\|_2$ and the sparsity levels SL of $\hat{\mathbf{x}}$. From the numerical results, we observe that the value of $f_{r}$ approximates the supremum $S$ (which is equal to the infimum of \eqref{numerical}) very well. This shows that the proposed dual approach provides an effective numerical method for solving the regularized extremal problem \eqref{numerical}.

In the second and the third experiments, we solve the regularized extremal problem \eqref{numerical} with $m=200$ and $m=600$, respectively, by the duality approach. This time we increase the size of the problem from $m=12$ to $m=200$ and $m=600$. We choose the number $n_0$ of the constraints in the dual problem \eqref{numerical_dual} as $n_0=333$ and $n_0=710$ by using the second empirical method described earlier. The selected values of $\rho$, the values of $\rho\|\hat\lambda\|_{\infty}$, $\|A_{*}\hat\lambda\|_{\infty}$, $S$, $f_{r}$, ERR, $\|\mathbf{y}-A\hat{\mathbf{x}}\|_2$ and the sparsity levels SL of $\hat{\mathbf{x}}$ are reported in Table \ref{Regularization_problem_m200} and Table \ref{Regularization_problem_m600}. The numerical results indicate that for problems of these sizes, the proposed duality approach works well.

The three numerical experiments presented in this section demonstrate that the 
proposed duality approach is feasible for solving the regularization problem 
in infinite dimensional Banach spaces. 

To close this section, we remark that the main purpose of this paper is 
to lay out the mathematical foundation of the duality approach. Issues 
related to practical implementation of this approach remain to be addressed.
Addressing the issues will be our future research project. 

%%%%%%%%%%%%%%%%%%%%%%%%%%%%%%
\vspace{-3mm}
\section*{Appendix: Proofs of Auxiliary Results}
\setcounter{equation}{0}
\renewcommand\theequation{A.\arabic{equation}}

Throughout the paper, numerous standard or straightforward results from functional analysis are used.  Their proofs are collected here for reference.
\vspace{5mm}

\noindent {\bf Proof of Proposition \ref{extvalatt}}\ \,
  There exists a sequence $\mathbf{a}'_n \in \mathscr{N}^{\perp}$, $n\in\mathbb{N}$, such that
\begin{equation}\label{infimum}
     \lim_{n\rightarrow\infty} \|\mathbf{a} - \mathbf{a}'_n\|_{\mathscr{A}} = \dist(\mathbf{a},\mathscr{N}^{\perp}).
\end{equation}
The sequence  $\{\mathbf{a}'_n\}_{n=1}^{\infty}$ is bounded in norm, since there holds
$\|\mathbf{a}'_n\|_{\mathscr{A}} \leq \|\mathbf{a}\|_{\mathscr{A}} +\|\mathbf{a} - \mathbf{a}'_n\|_{\mathscr{A}}.$
Therefore the Banach-Alaoglu Theorem supplies a subsequence $\{\mathbf{a}'_{n_k}:k\in\mathbb{N}\}$ that converges in the weak${}^*$ sense to some $\mathbf{a}' \in \mathscr{A}$.  That is, $  \lim_{k\rightarrow\infty}\mathbf{a}_{n_k}(\ell)  = \mathbf{a}'(\ell)
$, for all $\ell\in\mathscr{A}_*$. In particular, if $\ell\in \mathscr{N}$, then $\mathbf{a}'(\ell) = \lim_{k\rightarrow\infty}\mathbf{a}'_{n_k}(\ell) = 0$. Hence, $\mathbf{a}'\in \mathscr{N}^{\perp}$. It suffices to verify that $\|\mathbf{a} - \mathbf{a}' \|_{\mathscr{A}} = \dist(\mathbf{a},\mathscr{N}^{\perp})$. By the definition of the norm of $\mathscr{A}$, we have that for any $\epsilon>0$ there exists a unit vector $\ell\in\mathscr{A}_*$ such that \begin{equation}\label{equality}
    |\mathbf{a}(\ell) - \mathbf{a}'(\ell)| \geq \|\mathbf{a} - \mathbf{a}'\|_{\mathscr{A}} - \epsilon.
\end{equation}
It follows from equation \eqref{infimum} that
$\dist(\mathbf{a},\mathscr{N}^{\perp}) 
=\lim_{k\rightarrow\infty}\|\mathbf{a} - \mathbf{a}'_{n_k}\|_{\mathscr{A}}\geq \lim_{k\rightarrow\infty} |\mathbf{a}(\ell) - \mathbf{a}'_{n_k}(\ell)|, $
which further leads to 
$\dist(\mathbf{a},\mathscr{N}^{\perp})=|\mathbf{a}(\ell) - \mathbf{a}'(\ell)|.$
Substituting inequality \eqref{equality} into the above equation, we obtain that $
\dist(\mathbf{a},\mathscr{N}^{\perp}) 
\geq \|\mathbf{a} - \mathbf{a}'\|_{\mathscr{A}}-\epsilon.$
Since $\epsilon$ was arbitrary, it follows that $\dist(\mathbf{a},\mathscr{N}^{\perp}) \geq\|\mathbf{a} - \mathbf{a}'\|_{\mathscr{A}}$.  The reverse inequality holds because $\mathbf{a}' \in \mathscr{N}^{\perp}$, and thus the claim is proved.

\vspace{5mm}

\vspace{5mm}
\noindent{\bf Proof of Proposition \ref{normfcnldirsum}}\ \,
Since $(\mathbf{a},\mathbf{b})\in{\mathscr{A} \oplus_p \mathscr{B}}$ is normed by $(\lambda,\mu) \in \mathscr{A}^*\oplus_{p'} \mathscr{B}^*$, we have that
   \begin{equation}\label{lambdamu_norm_ab}
       \big(\|\lambda\|_{\mathscr{A}^*}^{p'}+\|\mu\|^{p'}_{\mathscr{B}^*}\big)^{1/{p'}} = 1,\  \ \langle \mathbf{a}, \lambda \rangle_{\mathscr{A}} + \langle \mathbf{b},  \mu \rangle_{\mathscr{B}}
       = \big(\|\mathbf{a}\|^p_{\mathscr{A}}+ \|\mathbf{b}\|^p_{\mathscr{B}} \big)^{1/p}.
  \end{equation}
   At the same time, it must be that 
   \begin{align}
     {\langle \mathbf{a},\lambda\rangle_{\mathscr{A}} + \langle \mathbf{b},\mu\rangle_{\mathscr{B}}}
     &\leq \|\mathbf{a}\|_{\mathscr{A}}\|\lambda\|_{\mathscr{A}^*}+\|\mathbf{b}\|_{\mathscr{B}}\|\mu\|_{\mathscr{B}^*}\nonumber\\
     &\leq \big(\|\mathbf{a}\|_{\mathscr{A}}^p+\|\mathbf{b}\|_{\mathscr{B}}^p\big)^{1/p}\big(\|\lambda\|_{\mathscr{A}^*}^{p'}+\|\mu\|^{p'}_{\mathscr{B}^*}\big)^{1/{p'}}\nonumber\\
     &= \big(\|\mathbf{a}\|_{\mathscr{A}}^p+\|\mathbf{b}\|_{\mathscr{B}}^p\big)^{1/p}.\label{normest3}
  \end{align} 
  Equality is forced throughout \eqref{normest3}.  In particular, the condition for equality must hold in H\"{o}lder's inequality, as employed in the second step of \eqref{normest3}. The following identifications result 
  \begin{equation}\label{normest4}
  \langle \mathbf{a},\lambda\rangle_{\mathscr{A}}=\|\mathbf{a}\|_{\mathscr{A}}\|\lambda\|_{\mathscr{A}^*},\ \langle \mathbf{b},\mu\rangle_{\mathscr{B}}= \|\mathbf{b}\|_{\mathscr{B}}\|\mu\|_{\mathscr{B}^*}.
  \end{equation}
  and 
  \begin{equation}\label{normest5}
  \|\mathbf{a}\|^p_{\mathscr{A}} = C\|\lambda\|^{p'}_{\mathscr{A}^*},\ 
     \|\mathbf{b}\|^p_{\mathscr{B}} = C\|\mu\|^{p'}_{\mathscr{B}^*},
  \end{equation}
 for some positive constant $C$.  In fact, the value of $C$ is determined from 
$\|\mathbf{a}\|^p_{\mathscr{A}}+\|\mathbf{b}\|^p_{\mathscr{B}} = C\big(\|\lambda\|^{p'}_{\mathscr{A}^*}+\|\mu\|^{p'}_{\mathscr{B}^*}\big)= C.$
 
If $\mathbf{b}=0$, then the second equation in \eqref{normest5} yields that $\mu=0$, which together with the first equation in \eqref{lambdamu_norm_ab} leads to $\|\lambda\|_{\mathscr{A}^*}=1$. Hence, we conclude from the first equation in \eqref{normest4} that $\lambda$ is norming for $\mathbf{a}$. This proves statement 1. We can prove statement 2 by similar arguments. Finally, we verify statement 3. If $\mathbf{a}\neq 0$, $\mathbf{b}\neq 0$, equations in \eqref{normest5} show that $\lambda$ and $\mu$ are both nonzero. By normalizing the functionals $\lambda$ and $\mu$, we obtain norming functionals for $\mathbf{a}$ and $\mathbf{b}$, respectively:
 \begin{align*}
      \|\lambda\|_{\mathscr{A}^*}  
      &=  C^{-1/{p'}} \|\mathbf{a}\|_{\mathscr{A}}^{p/{p'}} =   \big(1+ \|\mathbf{b}\|^p_{\mathscr{B}}/\|\mathbf{a}\|^p_{\mathscr{A}} \big)^{-1/{p'}}, \\
     \|\mu\|_{\mathscr{B}^*}  
     &=  C^{-1/{p'}} \|\mathbf{b}\|_{\mathscr{B}}^{p/{p'}} =  \big(1+ \|\mathbf{a}\|^p_{\mathscr{A}}/\|\mathbf{b}\|^p_{\mathscr{B}} \big)^{-1/{p'}}.
 \end{align*}
  This proves the claims.
 
\vspace{5mm}
\noindent {\bf Proof of Proposition \ref{normfcnldirsum_p=infty}}\ \,
     By the hypothesis that $(\lambda,\mu)\in \mathscr{A}^* \oplus_{1} \mathscr{B}^*$ is norming for $(\mathbf{a},\mathbf{b})\in \mathscr{A} \oplus_{\infty} \mathscr{B}$, we have that
     \begin{equation}\label{lambdamu_norm_ab_infty}
         \|\lambda\|_{\mathscr{A}^*} + \|\mu\|_{\mathscr{B}^*}  = 1,
           \ \ \ 
           \langle\mathbf{a},\lambda\rangle_{\mathscr{A}}+ \langle\mathbf{b},\mu\rangle_{\mathscr{B}} =\max\{\|\mathbf{a}\|_{\mathscr{A}},\|\mathbf{b}\|_{\mathscr{B}}\}.
     \end{equation}
     According to the first equation in \eqref{lambdamu_norm_ab_infty}, we get that 
     \begin{align}
     {\langle \mathbf{a},\lambda\rangle_{\mathscr{A}} + \langle \mathbf{b},\mu\rangle_{\mathscr{B}}}
     &\leq \|\mathbf{a}\|_{\mathscr{A}}\|\lambda\|_{\mathscr{A}^*}+\|\mathbf{b}\|_{\mathscr{B}}\|\mu\|_{\mathscr{B}^*}\nonumber\\
     &\leq \big(\|\lambda\|_{\mathscr{A}^*} + \|\mu\|_{\mathscr{B}^*}\big)\max\{\|\mathbf{a}\|_{\mathscr{A}},\|\mathbf{b}\|_{\mathscr{B}}\}\nonumber\\
     &= \max\{\|\mathbf{a}\|_{\mathscr{A}},\|\mathbf{b}\|_{\mathscr{B}}\},\label{normest3_infty}
     \end{align} 
     which together with the second equation in \eqref{lambdamu_norm_ab_infty} shows that equality is forced throughout \eqref{normest3_infty}. Hence, we obtain equations in \eqref{normest4} and \begin{equation}\label{normest4_infty}
     \|\mathbf{a}\|_{\mathscr{A}}\|\lambda\|_{\mathscr{A}^*}=\|\lambda\|_{\mathscr{A}^*}\max\{\|\mathbf{a}\|_{\mathscr{A}},\|\mathbf{b}\|_{\mathscr{B}}\}, \ \  
     \|\mathbf{b}\|_{\mathscr{B}}\|\mu\|_{\mathscr{B}^*}=\|\mu\|_{\mathscr{B}^*}\max\{\|\mathbf{a}\|_{\mathscr{A}},\|\mathbf{b}\|_{\mathscr{B}}\}.
     \end{equation}
     
    To prove statement 1, we suppose that $\|\mathbf{a}\|_{\mathscr{A}} > \|\mathbf{b}\|_{\mathscr{B}}$. 
    Then the second equation in \eqref{normest4_infty} reduces to $\|\mathbf{b}\|_{\mathscr{B}}\|\mu\|_{\mathscr{B}^*}=\|\mathbf{a}\|_{\mathscr{A}}\|\mu\|_{\mathscr{B}^*}$,  which further yields that $\mu=0$. We then conclude by the equations in \eqref{lambdamu_norm_ab_infty} that $ \|\lambda\|_{\mathscr{A}^*}=1$ and $\langle\mathbf{a},\lambda\rangle_{\mathscr{A}} =\|\mathbf{a}\|_{\mathscr{A}}$, that is, $\lambda$ is norming for $\mathbf{a}$. This completes the proof of statement 1 and statement 2 may be proved similarly. It remains to show statement 3. We assume that $\|\mathbf{a}\|_{\mathscr{A}} = \|\mathbf{b}\|_{\mathscr{B}}$. If  $\mu=0$, then equations in \eqref{lambdamu_norm_ab_infty} lead directly to $ \|\lambda\|_{\mathscr{A}^*}=1$ and $\langle\mathbf{a},\lambda\rangle_{\mathscr{A}} =\|\mathbf{a}\|_{\mathscr{A}}$. That is to say, $\lambda$ is norming for $\mathbf{a}$. likewise, if $\lambda=0$, we may show that $\mu$ is norming for $\mathbf{b}$. Finally, if  $\lambda$ and $\mu$ are both nonzero vectors, then we conclude by  equations in \eqref{normest4} that $\lambda/\|\lambda\|_{\mathscr{A}^*}$ is norming for $\mathbf{a}$, and $\mu/\|\mu\|_{\mathscr{B}^*}$ is norming for $\mathbf{b}$, proving the desired results.

\section*{Acknowledgement}

The authors are grateful to Dr. Qianru Liu for assistance in obtaining the numerical results presented in section 5.
R. Wang is supported in part by the National Key Research and Development Program of China (grants no. 2020YFA0714100 and 2020YFA0713600) and by the Natural Science Foundation of China under grant 12171202; Y. Xu is supported in part by the US National Science Foundation under grants DMS-1912958 and DMS-2208386, and by the US National Institutes of Health under grant R21CA263876. All correspondence should be sent to Y. Xu.

\end{document}